\newcommand{\negphantom}[1]{\settowidth{\dimen0}{#1}\hspace*{-\dimen0}}
\theoremstyle{plain}
\newtheorem{thm}{Theorem}[section]
\newtheorem{cor}[thm]{Corollary}
\newtheorem{lem}[thm]{Lemma}
\theoremstyle{definition}
\newtheorem{dfn}[thm]{Definition}
\DeclareMathOperator{\aut}{Aut}
\DeclareMathOperator{\pow}{Pow}
\DeclareMathOperator{\soc}{Soc}
\DeclareMathOperator{\psl}{PSL}
\DeclareMathOperator{\psu}{PSU}
\DeclareMathOperator{\sz}{Sz}
\DeclareMathOperator{\spl}{SL}
\DeclareMathOperator{\alt}{Alt}
\DeclareMathOperator{\sym}{Sym}
\definecolor{lightblue}{rgb}{0.5,0.5,1.0}
\definecolor{darkred}{rgb}{0.5,0,0}
\definecolor{darkgreen}{rgb}{0,0.5,0}
\definecolor{darkblue}{rgb}{0,0,0.5}
\begin{document}
\title{
On groups with chordal power graph, including a classification in the case of finite simple groups
}

\author{Jendrik Brachter}
\author{Eda Kaja}
\affil{TU Darmstadt
}
\date{\today}
\maketitle

\begin{abstract}
We prove various properties on the structure of groups whose power graph is chordal. Nilpotent groups with this property have been classified by Manna, Cameron and Mehatari [The Electronic Journal of Combinatorics, 2021]. Here we classify the finite simple groups with chordal power graph, relative to typical number theoretic oracles. We do so by devising several sufficient conditions for the existence and non-existence of long cycles in power graphs of finite groups. 

We examine other natural group classes, including special linear, symmetric, generalized dihedral and quaternion groups, and we characterize direct products with chordal power graph. The classification problem is thereby reduced to directly indecomposable groups and we further obtain a list of possible socles. Lastly, we give a general bound on the length of an induced path in chordal power graphs, providing another potential road to advance the classification beyond simple groups.
\end{abstract}

\section{Introduction}

The study of graphs defined on the set of elements of a finite group currently is among the highly active areas in finite group theory. 
Recent contributions include work on the power graph and the enhanced power graph~\cite{bera_bhuniya,aalipour_akbari_cameron_nikandish_shaveisi_2017, abawajy_kelarev_chowdhury_2013, cameron_ghosh_2011},
the commuting graph~\cite{comm_graph_2022, morgan_parker_2013},
the non-generating graph~\cite{cameron_freedman_roney-dougal, lucchini_nemmi_2022}, the soluble graph~\cite{burness_lucchini_nemmi_2021},
and so on. The overarching goal of this line of research is to gain a better understanding of groups using graph theoretic techniques. In doing so, new combinatorial perspectives in group structure theory are developed. A comprehensive overview on graphs defined on groups can be found in~\cite{cameron2021graphs}.

In the present work we are concerned with the power graph of a group. Recall that the \emph{power graph} of a group $G$ is the simple graph whose vertex set is the set of elements of $G$ and an edge between two elements $x$ and $y$ exists if and only if one of $x=y^m$ or $y=x^m$ holds for some $m\in\mathbb{N}$. The concept goes back to an analogous definition for semigroups~\cite{ChakrabartyGhoshSen}.

As with many graphs defined on groups, early works on the power graph often investigate the expressiveness of the resulting graphs by considering widely-used graph theoretic parameters. These include connectedness~\cite{pandakrishna}, completeness or planarity~\cite{ChakrabartyGhoshSen}, among others. More generally the aim is to gain a detailed understanding of the subgraph structure. 

From the perspective of graph structure theory, an important tool in this regard is the framework of forbidden induced subgraphs, where one studies which patterns do not appear in a given graph. Among the most basic examples are cluster graphs which forbid paths of length three and which are simply disjoint unions of cliques. Generalizing cluster graphs, cographs and chordal graphs are the first non-trivial classes to study. A graph is a \emph{cograph} if and only if it does not contain an induced path of length four, and there is extensive literature on graph decompositions starting precisely with cographs~\cite{graph_classes}. 

\emph{Chordal graphs} are exactly those graphs which do not contain any induced cycle of length at least four. Chordal graphs are of importance for an abundance of reasons in diverse areas of graph theory, as reflected by the fact that they are known under numerous names, including triangulated graphs~\cite{berge1966some,golumbic}, perfect elimination graphs~\cite{golumbic_goss} or decomposable graphs~\cite{cowell_dawid_lauritzen_spiegelhalter}. The structure of chordal graphs allows us to efficiently solve various algorithmic problems that are in general difficult, such as graph coloring or finding the largest clique which can be solved in polynomial time for chordal graphs~\cite{gavril, parra_scheffler}.

In the case of power graphs, forbidden induced subgraphs were considered  in~\cite{manna2021forbidden}, where the authors classify the finite groups whose power graph is a split graph (for power graphs, this is characterized by forbidding the induced $4$-cycle and its complement) or a threshold graph (forbidding the $4$-cycle and its complement as well as the $4$-vertex path). In~\cite{cameron2021finite}, the same authors started to characterize the finite groups whose power graph is a cograph. The major contribution of~\cite{cameron2021finite} is a complete classification of simple groups whose power graph is a cograph, relative to very natural number theoretic oracles. Recently we completed this work by giving a classification of arbitrary non-solvable groups whose power graph is a cograph~\cite{bk}, relative to the same number theoretic oracles.

In the present work we continue with the investigation of cycles and chordality in power graphs. 
Originally, the question of which groups admit chordal power graphs was posed in~\cite{cameron2021graphs}. In~\cite{manna2021forbidden} the authors give a classification of nilpotent groups with chordal power graph. There is also an earlier study on groups whose power graph does not contain induced cycles of length four~\cite{doostabadi}.
Hereafter we say that a group is \textit{power-chordal} if its power graph is a chordal graph.

For our first main result, we carry out a case-by-case analysis in the class of finite simple groups. We obtain a complete classification of power-chordal finite simple groups, relative to typical number theoretic oracles.
\begin{thm}\label{thm:main}
	If $G$ is a finite simple group, then the power graph of $G$ is chordal if and only if
	$G$ is isomorphic to one of the following:
	\begin{enumerate}
		\item a cyclic group of prime order,
		\item an alternating group $\alt(5)$, $\alt(6)$ or $\alt(7)$, or the group $\psl_3(4)$,
		\item $\psl_2(q)$ with $q\geq 5$ a prime power, where each of $(q-1)/\gcd(q-1,2)$ and $(q+1)/\gcd(q+1,2)$ have at most two prime divisors and at most one prime divisor with multiplicity greater than $1$,
		\item $\sz(q)$ with $q=2^{2n+1}$ for $n\geq 1$, where each of $q-1$, $q-2^{n+1}+1$, and $q+2^{n+1}+1$ have at most two prime divisors and at most one prime divisor with multiplicity greater than $1$.
	\end{enumerate}
\end{thm}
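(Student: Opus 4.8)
The plan is to apply the Classification of Finite Simple Groups and treat each family separately, reducing in every case to a statement about cyclic subgroups.

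First, the reductions. The identity is a dominating vertex, so it lies on no induced cycle of length $\ge 4$; hence $G$ is power-chordal if and only if the power graph of $G\setminus\{1\}$ is, and in general dominating vertices may be ignored. I would then lean on the sufficient criteria developed earlier: (i) if $G$ has a cyclic subgroup whose order is divisible by three distinct primes, or by the squares of two distinct primes, then the power graph contains an induced $6$-cycle, respectively $4$-cycle, so $G$ is not power-chordal; (ii) finer criteria producing induced cycles from configurations of cyclic subgroups with prescribed intersections; and (iii) a decomposition principle: if every non-identity element of $G$ lies in a unique maximal cyclic subgroup --- equivalently, distinct maximal cyclic subgroups meet trivially --- then the power graph of $G\setminus\{1\}$ is the disjoint union of the power graphs (minus their identities) of the maximal cyclic subgroups. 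Combined with the observation, immediate from (i) together with a direct check that $C_{p^aq}$ contains no induced cycle of length $\ge 4$, that a cyclic group $C_n$ is power-chordal exactly when $n$ has at most two prime divisors, at most one repeated, this gives a usable handle on the positive cases.

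The power-chordal families. A cyclic group of prime order has complete, hence chordal, power graph. For $\psl_2(q)$ with $q=p^f\ge 5$ and for $\sz(q)$ I would argue as follows. Every element of order prime to $p$ lies in a unique maximal torus: its centraliser is that torus when the element is not an involution, and a dihedral group having the torus as its unique index-two cyclic subgroup when it is; moreover there are no elements whose order mixes $p$ with a prime dividing a torus order, so there are no ``mixed'' edges. Consequently the non-identity part of the power graph is the disjoint union of the power graph of the $p$-part and of the power graphs of the cyclic tori --- of orders $(q\pm1)/\gcd(q\pm1,2)$ for $\psl_2(q)$, and $q-1$, $q\pm 2^{n+1}+1$ for $\sz(q)$. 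The $p$-part is itself chordal: for $\psl_2(q)$ the Sylow $p$-subgroup is elementary abelian, so one gets a disjoint union of cliques $K_{p-1}$ with an apex; for $\sz(q)$ the $2$-part is a union of triangles $\langle w\rangle\setminus\{1\}$ glued along the independent set of involutions, which has no induced cycle of length $\ge 4$. Applying the cyclic-group criterion to the tori yields exactly the arithmetic conditions in parts (3) and (4). The isomorphisms $\psl_2(4)\cong\psl_2(5)\cong\alt(5)$ and $\psl_2(9)\cong\alt(6)$ subsume $\alt(5),\alt(6)$ here (note $(9-1)/2=2^2$ is permitted, $(9+1)/2=5$), while $\alt(7)$ and $\psl_3(4)$ are to be confirmed power-chordal by direct, if necessary computer-assisted, analysis --- e.g. by checking that the centralisers of all prime-order elements are too small to support any configuration of type (ii).

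Excluding the rest. For $\alt(n)$ with $n\ge 12$ the permutation of cycle type $(2,2,3,5)$ spans a $C_{30}$, so (i) applies; $\alt(8),\ldots,\alt(11)$, the remaining sporadic groups, and the small groups of Lie type not caught by (i) must each be excluded using the finer criteria (ii). For the generic classical and exceptional families the plan is to use the known orders of maximal tori, together with Zsigmondy-type primitive prime divisors, to exhibit a torus whose order has three distinct prime factors, so that (i) applies outside a bounded set of parameters in each family; the bounded remainder, and the sporadics, are then disposed of by direct structural or computational checks that a type-(ii) configuration is present. I expect the main obstacle to be precisely this last stage: obtaining, uniformly across all families of Lie type, enough control over element and maximal-torus orders to force a three-prime cyclic subgroup, and then eliminating the finitely many genuinely small groups --- and all $26$ sporadics --- without letting a single one slip through; this, together with the number-theoretic oracles needed to evaluate the conditions in parts (3) and (4), is where essentially all the difficulty resides.
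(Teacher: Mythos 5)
Your positive direction is essentially sound and close in spirit to the paper's: for $\psl_2(q)$ and $\sz(q)$ distinct maximal cyclic subgroups intersect trivially (the classical partitions into Sylow $p$-subgroups and cyclic tori), so the reduced power graph splits into the power graphs of the pieces and the arithmetic conditions of parts (3) and (4) drop out; the paper reaches the same conclusion via a centralizer-based sufficient condition (for $\sz(q)$ using Suzuki's theorem that centralizers are nilpotent), so this part is a legitimate, if slightly different, route. The genuine gap is in the exclusion stage. First, your criterion (i) only detects cyclic subgroups of order $pqr$ or $p^2q^2$, but much of the necessary work uses \emph{non-cyclic} abelian subgroups, namely products of two tori such as $C_{q-1}\times C_{(q-1)/(3,q-1)}$ in $\psl_3(q)$ or $C_{q\pm1}\times C_{q\pm1}$ in $\mathrm{PSp}_4(q)$, combined with the nilpotent classification of power-chordal groups: for instance $C_9\times C_3\times C_2$ is not power-chordal although it contains no element of order $pqr$ or $p^2q^2$, so (i) cannot see it.

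Second, and more seriously, the plan ``use Zsigmondy to exhibit a cyclic torus whose order has three distinct prime factors, so that (i) applies outside a bounded set of parameters in each family'' does not work. Zsigmondy supplies one primitive prime per cyclotomic factor, and for the low-rank families ($\psl_3(q)$, $\psu_3(q)$, $\mathrm{PSp}_4(q)$, $G_2(q)$, \dots) the relevant cyclic tori have orders of the shape $(q^2\pm q+1)/d$ or $(q^2\pm1)/d$, which cannot be shown unconditionally to have three distinct prime divisors for all large $q$ -- that is itself an open number-theoretic problem -- while the theorem's oracles are confined to the $\psl_2$ and $\sz$ conditions, so all other exclusions must be unconditional. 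The paper gets them structurally: a lemma stating that commuting elements $x,y$ of coprime orders with neither $\langle x\rangle$ normal in $C_G(y)$ nor $\langle y\rangle$ normal in $C_G(x)$ force an arbitrarily long induced cycle; explicit matrices realizing this in $\spl_3(q)$ modulo scalars for all $q\notin\{2,4\}$; Levi-subgroup (Dynkin sub-diagram) embeddings propagating this to all classical and exceptional groups of higher rank, with $F_4(q)$ inside the $E$-series and the fact that a product of two non-abelian simple groups is never power-chordal; Catalan's theorem (a proved result, not an oracle) plus finitely many computer checks then finish $\psl_3$, $\psu_3$, $\mathrm{PSp}_4$, the small cases and the sporadics. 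Without an ingredient of this kind your exclusion stage either leaves infinitely many groups (already $\psl_3(q)$) unresolved or silently imports number-theoretic hypotheses beyond those stated in the theorem.
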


The general approach we take is similar to~\cite{cameron2021finite} in the sense that chordality in power graphs induces similar, albeit weaker, number theoretic restrictions compared with those used in~\cite{cameron2021finite} to classify simple groups whose power graphs are cographs. To deal with the remaining cases, where the number theoretic restrictions are inconclusive, we devise several sufficient conditions for chordality and non-chordality of power graphs.
 
Beyond simple groups, we observe connections between long cycles and long paths in power graphs of arbitrary groups. Not only are chordal power graphs the next natural class to consider but we also transition from forbidding small, explicitly given subgraphs to infinite families of forbidden subgraphs. Indeed, while threshold graphs, split graphs or cographs can be defined by forbidding at most three fixed graphs, no finite set of forbidden induced subgraphs is restrictive enough to define chordal graphs. In the context of power graphs, the situation of infinitely many forbidden subgraphs has not been investigated in the literature before.
This challenges us to develop generic methods, dealing with types of graphs rather than a finite set of explicitly given structures. 

Maybe even more importantly, chordality in graphs encodes global information. In the case of cographs it was sufficient to consider local restrictions, by which we mean restrictions that can be expressed in terms of subgraphs of bounded size, and which translate to group theoretic properties of centralizers of group elements.
In contrast, here we are confronted with the problem that a cycle locally resembles a path and thus we might expect that similar methods would not be sufficient in the case of chordal graphs. 

Somewhat surprisingly, we do show that there is a global bound on the length of the longest induced path in a non-chordal power graph. In turn, chordal power graphs can actually be characterized by excluding finitely many graphs.
\begin{thm}\label{thm:bound_max_len}
	Let $G$ be a finite power-chordal group. If $\wp$ is an induced path on $L$ vertices in $\pow(G)$, then $L\leq 19$. In particular, if $G$ is any finite group, then $\pow(G)$ is chordal if and only if it neither contains induced cycles of length $\leq 20$ nor contains an induced path of length $20$.
\end{thm}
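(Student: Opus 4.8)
The plan is to handle the ``in particular'' clause by a short reduction to the bound $L\le 19$, and then to prove that bound via the poset of cyclic subgroups of $G$.

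\emph{Reduction.} By definition $\pow(G)$ is chordal if and only if it has no induced cycle of length $\ge 4$. If $\pow(G)$ is chordal it has no induced $C_k$ for any $k\ge 4$, and by the bound $L\le 19$ it has no induced path on $\ge 20$ vertices, so none of the listed configurations occurs. Conversely, if $\pow(G)$ has an induced cycle $C$ with $|C|\ge 4$: either $|C|\le 20$, or $|C|\ge 21$ and then deleting one vertex of $C$ leaves an induced path on $|C|-1\ge 20$ vertices, hence an induced sub-path on exactly $20$ vertices. (As with $P_4$ for cographs, the ``length'' of a path counts its vertices.) So everything comes down to proving $L\le 19$.

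\emph{Setting up.} The basic observation I would use is that for $x,y\in G$ one has $x\sim y$ in $\pow(G)$ exactly when $\langle x\rangle$ and $\langle y\rangle$ are comparable under inclusion; in particular no $v_i$ on an induced path of length $\ge 3$ can be the identity. Given an induced path $\wp=(v_1,\dots,v_L)$ with $L\ge 3$, the $\langle v_i\rangle$ are pairwise distinct and the consecutive inclusions are strict and \emph{alternate} in direction: from $\langle v_{i-1}\rangle\subseteq\langle v_i\rangle\subseteq\langle v_{i+1}\rangle$ (or the reverse) transitivity would produce the chord $v_{i-1}v_{i+1}$. Hence $\langle v_1\rangle\subsetneq\langle v_2\rangle\supsetneq\langle v_3\rangle\subsetneq\cdots$ is a \emph{fence}: the $v_i$ split into \emph{peaks}, strictly containing both path-neighbours, and \emph{valleys}, strictly contained in both. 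Each peak contains two incomparable cyclic subgroups, so its order is not a prime power; and since a power-chordal group has no element whose order is divisible by three primes or by the square of each of two primes (already $\mathbb{Z}/pqr$ and $\mathbb{Z}/p^2q^2$ yield an induced $C_6$, resp.\ $C_4$, in the power graph), every peak is cyclic of order $p^aq$. Its subgroup lattice is a ladder with a ``$p$-rail'' $\langle p\rangle\subsetneq\cdots\subsetneq\langle p^a\rangle$ and a ``$pq$-rail'' $\langle q\rangle\subsetneq\cdots\subsetneq\langle p^aq\rangle$, and one checks that the two valleys of a given peak must lie on its two different rails.

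\emph{Core step and main obstacle.} It then remains to bound the number of rungs of such a fence. Each valley is shared by two consecutive peaks and occupies a rail of each, which forces compatibility relations between the prime pairs and the rail-heights of neighbouring peaks; I would propagate these along the fence, using additionally that (i) all peaks are pairwise incomparable, (ii) all valleys are pairwise incomparable, and (iii) no non-adjacent peak--valley pair is comparable. Chordality of $\pow(G)$ forbids in particular a configuration $\{P,P'\}\supsetneq\{V,V'\}$ with $P\not\sim P'$ and $V\not\sim V'$ (an induced $C_4$) and a ``crown'' of three peaks over three valleys with crossing containments (an induced $C_6$), so the heart of the argument is to show that once the fence is long enough it must realise one of these, or else collapse a forbidden chord and contradict that $\wp$ is induced. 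Tracking the worst case through this rail-bookkeeping should give the explicit value $L\le 19$. I expect this extremal analysis --- controlling how a fence can meander through the union of the ladders contributed by different composite-order elements, and pinning down the precise constant --- to be the main difficulty: the overall scheme is robust, but the book-keeping that produces the exact bound $19$ is the delicate part.
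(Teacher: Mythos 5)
Your reduction of the ``in particular'' clause to the bound $L\le 19$ is fine, and your setup (alternation, peaks of order $p^aq$, no elements of order $pqr$ or $p^2q^2$) matches the paper's preliminary lemmas. But the proof has a genuine gap exactly where the theorem's content lies: the bound itself. You explicitly defer the ``rail-bookkeeping'' that is supposed to produce $L\le 19$, saying you expect the extremal analysis to work out; nothing in the proposal actually derives any finite bound, let alone the constant $19$.

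More importantly, the route you sketch cannot succeed as stated. Your plan only uses configurations built from the path's own vertices and the cyclic subgroups they generate (pairwise incomparabilities, forbidden $C_4$/$C_6$ patterns inside the ``fence''). But a path graph is itself chordal, so forbidding induced cycles among the path vertices imposes no bound on the path's length; any obstruction must come from producing an induced cycle through elements \emph{outside} the path. This is precisely what the paper's argument does and what is missing from yours: one takes a path of \emph{maximal} length, passes to a power-reduced path (in-vertices of prime order), and conjugates the tail of the path by its initial vertex $x=\wp_1$ --- if $x$ fails to normalize some $\langle\wp_j\rangle$ in the first half, the conjugate tail either extends the path (contradicting maximality) or closes it into an induced cycle (Lemma~\ref{lem:bound_path_len_by_max_len}). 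Combining this with the fact that normalizing-but-not-centralizing actions on the prime-order cyclic groups $\langle\wp_i\rangle$ are fixed-point free and that $\aut(C_p)$ is cyclic, the paper manufactures an element $g\in\langle\wp_1,\wp_5\rangle$ off the path that centralizes both $\wp_3$ and $\wp_n$ and thereby closes a cycle; the resulting constraints on which primes can recur among $|\wp_1|,|\wp_3|,|\wp_5|,\dots,|\wp_n|$ (together with Lemma~\ref{lem:path_start_no_centralizer}) are what force $n\in\{7,9\}$ and hence $L\le 19$. Without some substitute for this global use of maximality, conjugation and the structure of automorphisms of cyclic groups, your local lattice bookkeeping has no mechanism to terminate the fence, so the core step remains unproved.
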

The result is obtained by analyzing the action of $\aut(G)$ on the power graph: in the absence of long cycles, paths of maximal length restrict possible automorphisms more severely with increasing length.

Our final contribution is a characterization of direct factors of power-chordal groups. Spelling out the details turns out to be somewhat involved, so we only give a compact reformulation here. For the full picture, see Theorem~\ref{lem:direct_prod} in Section~\ref{sec_direct_prod}.
\begin{thm}[Theorem~\ref{lem:direct_prod} rephrased]
	If $G$ is power-chordal, then $G$ is directly indecomposable or $G$ can be explicitly characterized in terms of basic group theoretic properties.
\end{thm}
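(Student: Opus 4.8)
The plan is to reduce the problem to a bounded list of ``obstruction'' power graphs and then carry out a case analysis identifying exactly which decomposable groups avoid all of them. The basic tool is that for every subgroup $H\le G$ the power graph $\pow(H)$ is an induced subgraph of $\pow(G)$ --- adjacency in $\pow$ depends only on the cyclic subgroup generated by a vertex --- so every subgroup of a power-chordal group is again power-chordal. Writing $G=A\times B$ with $A,B\neq 1$, this applies to all abelian subgroups $\langle a\rangle\times\langle b\rangle\cong\mathbb{Z}_{\operatorname{ord}(a)}\times\mathbb{Z}_{\operatorname{ord}(b)}$ ($a\in A$, $b\in B$), and more generally to every subgroup of the form $H_A\times H_B$; so the first step is to determine which of these building blocks are power-chordal.

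The key lemma is that every finite $p$-group --- more generally, every group whose elements all have prime-power order --- is power-chordal. The proof is immediate: in an induced cycle $x_1\cdots x_\ell x_1$ of length $\ell\ge 4$, choose $i$ with $\langle x_i\rangle$ maximal (by inclusion) among $\langle x_1\rangle,\dots,\langle x_\ell\rangle$; then the cycle-neighbours $x_{i-1},x_{i+1}$ lie in $\langle x_i\rangle$, which is cyclic of prime-power order and so has a complete power graph, giving $x_{i-1}\sim x_{i+1}$ --- contradicting inducedness. Thus any obstruction to chordality in a decomposable group arises from mixing distinct primes, and the next step is to record the minimal such obstructions, each verified by writing down an induced cycle explicitly: $\pow(\mathbb{Z}_p\times\mathbb{Z}_q\times\mathbb{Z}_r)$ has an induced $6$-cycle for distinct primes $p,q,r$; $\pow(\mathbb{Z}_p\times\mathbb{Z}_p\times\mathbb{Z}_q)$ has an induced $5$-cycle and $\pow(\mathbb{Z}_{p^2}\times\mathbb{Z}_{q^2})$ an induced $4$-cycle for distinct $p,q$; and the direct product of a generalized quaternion group with a nontrivial cyclic group of coprime order is never power-chordal (already $\pow(Q_8\times\mathbb{Z}_3)$ has an induced $4$-cycle). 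Together with the $p$-group lemma these also pin down precisely which abelian groups $\mathbb{Z}_m\times\mathbb{Z}_n$ are power-chordal.

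With these facts the case analysis proceeds. If $G=A\times B$ is a $p$-group we are done, so assume $|G|$ has at least two prime divisors. From the obstructions one extracts restrictions such as: if $\operatorname{Syl}_p A$ is non-cyclic then $|B|$ is a power of $p$ (hence at most one factor has a non-cyclic Sylow subgroup, and then the other factor is a cyclic $p$-group); two distinct primes dividing one factor together with a third prime dividing the other is forbidden; an odd-order direct factor forbids generalized-quaternion Sylows; and higher prime powers may occur only in patterns avoiding an embedded $\mathbb{Z}_{p^2}\times\mathbb{Z}_{q^2}$. Propagating these restrictions --- while taking care that a non-abelian factor need not split off its Sylow subgroups, so that obstruction subgroups must be located inside $A\times B$ by finding commuting elements of the required orders --- forces $G$ into one of a small number of explicit shapes (a $p$-group; certain cyclic groups; and products of a cyclic group with a directly indecomposable power-chordal group subject to number-theoretic conditions in the spirit of Theorem~\ref{thm:main} --- so that, for instance, $\alt(4)$ and $\alt(5)$ survive while $\alt(6)$, $\alt(7)$ and $\psl_3(4)$ do not, their Sylow $3$-subgroups being non-cyclic). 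For each surviving shape one then checks power-chordality directly.

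The main obstacle is twofold. First, because a direct factor may be non-abelian with all elements of prime-power order, one cannot always produce an obstruction subgroup such as $\mathbb{Z}_{pqr}$ or $\mathbb{Z}_{p^2q^2}$ inside a single factor; the argument has to run in $A\times B$ with precise control of which prime-order elements commute, and this is where the decomposition hypothesis is genuinely used. Second --- and this is the laborious part --- the converse direction for the surviving mixed families falls outside the scope of the clean maximal-cyclic-subgroup lemma (cyclic groups of non-prime-power order do not have complete power graphs), so one must argue chordality by hand, typically by deleting the identity together with a bounded number of almost-universal vertices and showing that what remains is a union of cliques sharing a common clique. Assembling the resulting bookkeeping into the precise statement of Theorem~\ref{lem:direct_prod}, and reading off the possible socles, is what remains.
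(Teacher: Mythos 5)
Your general framework (subgroup-closure, the EPPO lemma with the maximal-$\langle x_i\rangle$ argument, the obstructions $C_{p^2q^2}$ and $Q_8\times C_3$) is sound, but the list of ``minimal obstructions'' on which the whole case analysis rests contains a false entry, and the error propagates. You claim $\pow(C_p\times C_p\times C_q)$ has an induced $5$-cycle: no power graph contains an induced cycle of odd length at least $5$ (in- and out-vertices alternate along any induced cycle, cf.\ Lemma~\ref{lem:paths_alternating}; equivalently, power graphs are perfect), and in fact $C_p\times C_p\times C_q\cong C_q\times(C_p\times C_p)$ \emph{is} power-chordal by Theorem~\ref{thm:pocho_nilpot} --- it is an instance of Case~1 of Theorem~\ref{lem:direct_prod}. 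Consequently your derived restrictions are wrong: ``if a Sylow subgroup of one factor is non-cyclic then the other factor is a $p$-group'' fails for any non-cyclic exponent-$p$ group times $C_{q^n}\rtimes P$, and for $C_p\times\alt(5)$ (power-chordal for every $p$), $C_2\times\alt(6)$ and $C_2\times\alt(7)$ (both power-chordal, contrary to your assertion that $\alt(6)$, $\alt(7)$, $\psl_3(4)$ ``do not survive''; the genuine obstruction for $\alt(6)\times C_p$ with $p$ odd is two cyclic subgroups of order $4$ meeting in a $C_2$ together with a coprime centralizing element, via Lemma~\ref{lem:char_prop_C4}, not a non-cyclic Sylow $3$-subgroup). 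Likewise ``two primes dividing one factor plus a third prime dividing the other is forbidden'' is false: $\alt(5)\times C_7$ is power-chordal. The correct obstructions are phrased in terms of element orders and of \emph{pairs of distinct cyclic subgroups of coprime prime orders in the two factors} (which yield induced $8$-cycles), not in terms of prime divisors or Sylow structure of the factors.

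Because of this, your final list of surviving shapes (a $p$-group, certain cyclic groups, and cyclic times directly indecomposable) does not match the actual classification: Theorem~\ref{lem:direct_prod} also admits a non-cyclic group of exponent $p$ times $C_{q^n}\rtimes P$ with $P$ a $p$-group (Case~1), cyclic $C_{q^m}$ or $C_{pq}$ times suitable EPPO groups with arbitrarily many prime divisors (Case~2), and products of two metacyclic Frobenius-type groups $(C_{p^m}\rtimes C_{q^n})\times(C_{r^d}\rtimes C_{q^f})$ (Case~3); none of these are covered by your shapes, so more than ``bookkeeping'' is missing. The paper's argument instead runs as follows: necessity comes from Theorem~\ref{thm:pocho_nilpot}, from the $8$-cycle construction just mentioned, and from the $4$-cycle criterion of Lemma~\ref{lem:char_prop_C4}; sufficiency is not proved by exhibiting a clique structure after deleting near-universal vertices (which would be hard for the mixed metacyclic cases), but by reducing any putative long induced cycle to a power-reduced one (Lemma~\ref{lem:assume_cycle_pow_red}), using Lemma~\ref{lem:primes_alternate} to force its prime-order in-vertices to centralize two distinct cyclic subgroups of coprime order, and showing the hypotheses leave no room for such in-vertices, with $4$-cycles excluded separately via Lemma~\ref{lem:char_prop_C4}. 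You would need to rebuild your obstruction list and the case analysis along these lines before the proposal could yield the stated theorem.
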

As a consequence we obtain the following lemma, which provides a basis for advancing the classification of power-chordal groups beyond the simple case.
\begin{lem}
	If $G$ is a finite power-chordal group, then $\soc(G)$ is either isomorphic to $C_q^m$ for a prime $q$, or a non-abelian finite simple group $T$ from Theorem~\ref{thm:main}, or a direct product of one of these groups with $C_p$ for a prime $p\neq q$. If $\soc(G)\cong T\times C_p$, then $p$ divides $|T|$ or $T$ is additionally a simple $\mathrm{EPPO}$-group.
\end{lem}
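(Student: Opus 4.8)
\emph{Proof strategy.}
The statement should fall out of the direct product characterization, Theorem~\ref{lem:direct_prod}, applied to the socle. The starting observation is that power-chordality passes to subgroups: for $H\leq G$, the power graph $\pow(H)$ is the subgraph of $\pow(G)$ induced on the vertex subset $H$, and induced subgraphs of chordal graphs are chordal. Hence $\soc(G)$ is power-chordal. By definition $\soc(G)=S_1\times\cdots\times S_k$ is a direct product of simple groups; each $S_i$ is a subgroup of $\soc(G)$ and hence power-chordal, so every abelian factor $S_i$ is cyclic of prime order and every non-abelian factor is one of the groups from Theorem~\ref{thm:main}.

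Next, write $\soc(G)=N\times A$, where $N$ is the product of the non-abelian factors and $A=\prod_{q}C_q^{m_q}$ is the product of the cyclic ones. The plan is to feed $\soc(G)$, with this decomposition, into Theorem~\ref{lem:direct_prod}. First I would rule out $N$ being a product of two or more non-abelian simple groups: such a product is not power-chordal, so this case is excluded by Theorem~\ref{lem:direct_prod}; thus $N$ is trivial or a single group $T$ from Theorem~\ref{thm:main}. If $N=1$, then $A$ is a power-chordal product of groups of prime order, and the direct product analysis forces at most two primes to occur, and at most one of them with multiplicity greater than $1$; this leaves exactly $A\cong C_q^m$ or $A\cong C_q^m\times C_p$ with $p\neq q$. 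If $N\cong T$, then $\soc(G)=T\times A$, and applying Theorem~\ref{lem:direct_prod} to this product forces $A$ to be trivial or $A\cong C_p$; in the latter case its hypotheses unwind to exactly the alternative that $p$ divides $|T|$, or $T$ is an $\mathrm{EPPO}$-group (morally: a direct factor $C_p$ with $p\nmid|T|$ is compatible with chordality of $T\times C_p$ only when $T$ has no element of order divisible by two distinct primes).

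Two remarks streamline this. First, every $C_q^m$ is itself an $\mathrm{EPPO}$-group, all of whose element orders are $1$ or $q$; so the mixed abelian socle $C_q^m\times C_p$ is a special instance of the ``$\mathrm{EPPO}$-group times $C_p$'' pattern and needs no separate treatment. Moreover, none of the clauses is vacuous: $C_q^m$ is power-chordal because outside the identity $\pow(C_q^m)$ is a disjoint union of cliques, so its only induced cycles are triangles; $C_q^m\times C_p$ is power-chordal by the classification of nilpotent power-chordal groups of Manna, Cameron and Mehatari; and the non-abelian cases are recorded in Theorem~\ref{thm:main}. Second, for the final sentence there is no need to determine which of the groups in Theorem~\ref{thm:main} are $\mathrm{EPPO}$; that refinement matters only when passing from the possible shapes of $\soc(G)$ to an explicit list.

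The main obstacle is the translation in the second paragraph: reading off from Theorem~\ref{lem:direct_prod} precisely which direct products of simple groups, and of a simple group with $C_p$, remain power-chordal, and in particular recovering the dichotomy ``$p\mid|T|$ or $T$ is $\mathrm{EPPO}$'' from its statement. Concretely this rests on the non-chordality of $C_q^{a}\times C_{q'}^{b}$ for $\min(a,b)\geq2$, of $C_{p_1}\times\cdots\times C_{p_r}$ for $r\geq3$ distinct primes, of $T_1\times T_2$ for non-abelian simple $T_i$, and of $T\times C_p$ when $p\nmid|T|$ and $T$ is not $\mathrm{EPPO}$ --- all instances of the long-cycle constructions developed earlier, so the work reduces to assembling them rather than devising anything new.
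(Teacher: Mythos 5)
Your proposal is correct and follows essentially the same route as the paper: decompose $\soc(G)$ into its abelian part and non-abelian simple factors, use that power-chordality is inherited by subgroups, and reduce everything to Theorem~\ref{lem:direct_prod}, with the final dichotomy coming from the absence of elements of order $pqr$. The only (cosmetic) difference is that the paper rules out two non-abelian factors and $C_p^2\times T$ by exhibiting explicit induced $8$-cycles, whereas you route those exclusions through Theorem~\ref{lem:direct_prod} as well, which is equally valid.
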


Let us point out that $\mathrm{EPPO}$-groups have been classified by Brandl~\cite[Theorem 2]{bannuscher_tiedt}.

\section{Preliminaries}\label{sec:prelims}
\subsection{Notation}
In numeric expressions, we usually denote the \emph{greatest common divisor} of two integers $a$ and $b$ by $(a,b)$. 

The \emph{power graph} of a group $G$ is the simple graph $\pow(G):=(G,E)$ where $\{x,y\}\in E$ if and only if $x^m=y$ or $y^m=x$ holds for
some $m\in\mathbb{N}$. The \emph{directed power graph} D$\pow(G)$ is the same as the undirected graph, but edges are replaced by arcs pointing from elements to their powers (i.e., D$\pow(G)$ may contain bidirectional edges). In contrast, the \emph{commuting graph} of $G$, denoted by $\mathrm{Com}(G)$, has an edge joining $x$ and $y$ if and only if $xy=yx$ holds in $G$.

Given any graph $\Gamma$ and a subset $M$ of its vertices, the \emph{subgraph induced on $M$} in $\Gamma$ is denoted by $\Gamma[M]$. The \emph{length} of a path (resp. cycle) is the number of its vertices. For example, we use the terms \emph{$4$-vertex path}, \emph{$4$-path} and \emph{path of length $4$} interchangeably.

The \emph{alternating group} and the \emph{symmetric group} of degree $n$ are denoted $\alt(n)$ and $\sym(n)$, respectively.

A group $G$ is called an \emph{$\mathrm{EPPO}$-group}, if all elements of $G$ have prime-power order.

\subsection{Catalan's conjecture}
In dealing with number theoretic problems we repeatedly encounter Catalan's conjecture on two consecutive perfect powers, which was proposed in $1844$ by Catalan, and proved in $2002$ by Mih\u{a}ilescu. 

\begin{thm}[Mih\u{a}ilescu~\cite{mihailescu}]
The only solution to the equation $x^a-y^b=1$ in the natural numbers for $a,b > 1$ and $x,y >0$ is $x=3$, $a=2$, $y=2$ and $b=3$.
\end{thm}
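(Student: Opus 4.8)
The plan is to prove this by the now-standard route through cyclotomic field theory, following Mih\u{a}ilescu's resolution of Catalan's conjecture. First I would reduce to the case of prime exponents: if $p$ is any prime divisor of $a$, then $(x^{a/p})^p - y^b = 1$, so it suffices to treat $x^p - y^q = 1$ with $p,q$ prime. The classical small cases where one exponent equals $2$ can be dispatched separately. The equation $x^p - y^2 = 1$ with $p$ odd, i.e.\ $y^2 + 1 = x^p$, is handled by factoring $(y+i)(y-i) = x^p$ in the Gaussian integers $\mathbb{Z}[i]$ and using unique factorization to force a contradiction (Lebesgue's argument). The equation $x^2 - y^q = 1$ with $q$ odd, i.e.\ $(x-1)(x+1) = y^q$, is Ko Chao's theorem; one analyzes the greatest common divisor of the two factors and derives a contradiction from the resulting near-$q$-th-power structure. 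This isolates the genuinely hard case in which both $p$ and $q$ are odd primes.

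For odd primes $p,q$, the first key step is Cassels' divisibility relations: any solution must satisfy $q \mid x$ and $p \mid y$. I would prove these by a greatest-common-divisor analysis of the factorization $(x-1)\cdot\frac{x^p-1}{x-1} = y^q$: the two factors are coprime away from the prime $p$, and tracking the exact power of $p$ dividing each forces the claimed divisibilities. With Cassels' relations in hand, the second and central step is to derive the double Wieferich condition $p^{q-1} \equiv 1 \pmod{q^2}$ and $q^{p-1} \equiv 1 \pmod{p^2}$. This is where Stickelberger's theorem enters: the element $x - \zeta_p$ generates an ideal of $\mathbb{Z}[\zeta_p]$ whose ideal class is annihilated both by $q$ and by the Stickelberger ideal, and combining this annihilation with Cassels' relations and a careful computation of $p$-adic valuations yields the two congruences.

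The final and most delicate step is to rule out all remaining solutions despite the Wieferich conditions. Here I would pass to the \emph{plus part} of the cyclotomic field $\mathbb{Q}(\zeta_p)$, analyzing the Galois-module generated by $x - \zeta_p$ and bounding its size. Invoking Thaine's theorem on the annihilation of the real class group by cyclotomic units (or Mih\u{a}ilescu's self-contained substitute), one shows that a genuine solution would force this module to be simultaneously too large and too small — a contradiction that ultimately rests on a comparison of indices the Wieferich congruences cannot sustain. By symmetry in $p$ and $q$, no solution with both exponents odd survives, which together with the reductions above leaves only the exhibited solution $3^2 - 2^3 = 1$.

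The main obstacle is unquestionably this last cyclotomic argument: it requires genuine structural input about ideal class groups of cyclotomic fields — Stickelberger's theorem for the minus part and Thaine-type annihilators for the plus part — rather than elementary manipulation. The earlier reductions and the Gaussian-integer case are routine by comparison, and even Cassels' relations are essentially formal once one commits to working in $\mathbb{Z}[\zeta_p]$; the difficulty is concentrated entirely in converting the Wieferich obstruction into an outright contradiction via class-group annihilation.
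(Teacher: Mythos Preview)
The paper does not prove this theorem at all: it is stated in the preliminaries as a known result, attributed to Mih\u{a}ilescu with a citation, and then invoked as a black box in several later arguments about element orders in finite simple groups. There is no ``paper's own proof'' to compare against.

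Your outline is a faithful high-level sketch of Mih\u{a}ilescu's actual argument --- the reduction to prime exponents, the Lebesgue and Ko~Chao cases for exponent~$2$, Cassels' divisibility relations, the double Wieferich congruences via Stickelberger, and the final contradiction through annihilators of the plus part of the cyclotomic class group. As a roadmap it is accurate, though of course each of the last two steps conceals substantial work (in particular, the plus-part argument is the heart of Mih\u{a}ilescu's contribution and is not something one can simply ``invoke''). For the purposes of this paper, however, none of this is needed: the theorem is used only as a cited fact, and reproducing its proof would be entirely out of scope.
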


\subsection{Finite simple groups}\label{sec:prelims_simple_grps}
A significant portion of the present paper is dedicated towards a case-by-case analysis of families of finite simple groups and their power graphs based on the classification of finite simple groups (CFSG). A general reference including detailed descriptions of simple groups, their subgroups, and related constructions can be found in~\cite{wilson_2009}. Together with the~\textsc{ATLAS}~of Finite Groups~\cite{atlas} this will be our standard reference for properties of specific groups. We recall important aspects from the theory of finite simple groups that are necessary to lay out our high-level approach.

\begin{thm}[CFSG]\label{cfsg}
	Each finite simple group belongs in one of the following families:
	\begin{enumerate}
		\item cyclic groups of prime order.
		\item alternating groups $\alt(n)$ with $n\geq 5$.
		\item classical groups of Lie type. These are the simple groups of type $A_n(q)=\psl_{n+1}(q)$,
		\\$C_n(q)=\mathrm{PSp}_{2n}(q)$,~$\negphantom{x}\phantom{x}^2A_2(q)=\mathrm{PSU}_{n+1}(q)$,~$B_n(q)=\mathrm{P\Omega}_{2n+1}(q)$,~$D_n(q)=\mathrm{P\Omega}^+_{2n}(q)$,
		\\and~$\negphantom{x}\phantom{x}^2D_n(q)=\mathrm{P\Omega}^-_{2n}(q)$.
		\item exceptional groups of Lie type and the Tits group.
		The exceptional groups of Lie type are~$\sz(q)=\negphantom{x}\phantom{x}^2B_2(q)$ with $q\geq 8$ an odd power of $2$,
		$\mathrm{Ree}(q)=\negphantom{x}\phantom{x}^2G_2(q)$ with $q\geq 9$ an odd power of $3$, $G_2(q)$, $\negphantom{x}\phantom{x}^3D_4(q)$, $F_4(q)$, $\negphantom{x}\phantom{x}^2F_4(q)$, $E_6(q)$, $\negphantom{x}\phantom{x}^2E_6(q)$, $E_7(q)$, and $E_8(q)$.
The Tits group is $\negphantom{x}\phantom{x}^2F_4(2)'$.
		\item 26 sporadic groups. These are the Mathieu groups $\mathrm{M}_{11}$, $\mathrm{M}_{12}$, $\mathrm{M}_{22}$, $\mathrm{M}_{23}$, $\mathrm{M}_{24}$, the Janko groups $\mathrm{J}_1$, $\mathrm{J}_2$, $\mathrm{J}_3$, the Conway groups $\mathrm{Co}_1$, $\mathrm{Co}_2$, $\mathrm{Co}_3$, the Fischer groups $\mathrm{Fi}_{22}$, $\mathrm{Fi}_{23}$, $\mathrm{Fi}_{24}'$, the Higman-Sims groups $\mathrm{HS}$, the McLaughlin group $\mathrm{McL}$, the Held group $\mathrm{He}$, the Rudvalis group $\mathrm{Ru}$, the Suzuki group $\mathrm{Suz}$, the O'Nan group $\mathrm{O'N}$, the Harada-Norton group $\mathrm{HN}$, the Lyons group $\mathrm{Ly}$, the Thompson group $\mathrm{Th}$, the Baby Monster group $\mathrm{B}$ and the Fischer-Griess Monster group $\mathrm{M}$.
	\end{enumerate}
\end{thm}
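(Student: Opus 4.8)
The plan is to argue by induction on the group order, studying a \emph{minimal counterexample}: a finite simple group $G$ of least order not contained in the listed families. By minimality, every proper simple section of $G$ is already known, so $G$ is $\mathcal{K}$-proper and one may freely exploit the structure of all smaller simple groups as an inductive hypothesis. The whole proof is then an exhaustive analysis of the \emph{local subgroups} of $G$ — the normalizers $N_G(P)$ of nontrivial $p$-subgroups $P$ — designed to reconstruct $G$ up to isomorphism and thereby reach a contradiction.

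First I would apply the Feit--Thompson odd order theorem, which guarantees that $|G|$ is even, so that $G$ possesses an involution $t$. The Brauer--Fowler principle then makes the centralizer $C_G(t)$ the central object of study: only finitely many simple groups can share a given involution centralizer, so determining the possible $2$-local structure of $G$ and then identifying $G$ from it is a viable strategy. The decisive organizing result is a dichotomy (in the spirit of Gorenstein--Walter, later sharpened): either $G$ is of \emph{component type}, meaning some involution centralizer has a quasisimple component, or $G$ is of \emph{characteristic $2$ type}, meaning all of its $2$-local subgroups are $2$-constrained and its local structure mimics that of a group of Lie type in characteristic $2$.

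In the component type case I would use the signalizer functor method — culminating in Glauberman's solvable signalizer functor theorem — to locate a \emph{standard component} $L$ inside a suitable involution centralizer, and then solve the resulting \emph{standard form problem} for each admissible quasisimple $L$, using the inductive knowledge of smaller simple groups to pin down $G$. This generic branch yields the alternating groups $\alt(n)$, the Lie type groups in odd characteristic, and many of the sporadic groups. In the characteristic $2$ type case I would stratify by the $2$-local rank parameter: the \emph{quasithin} groups (rank at most $2$) are treated by Aschbacher--Smith through amalgam and weak-closure techniques, while in the larger rank regime Aschbacher's trichotomy theorem produces either a standard component, a group of $\mathrm{GF}(2)$ type, or the \emph{uniqueness case}; the last of these is eliminated, and the remaining configurations are recognized as Lie type groups in characteristic $2$ via the classification of groups with a split $(B,N)$-pair (Tits, Fong--Seitz) or the amalgam method. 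In every branch the endgame is a \emph{recognition theorem} — Curtis--Tits and Gilman--Griess type presentations for the Lie type groups, and $3$-transposition or tailored centralizer characterizations for the sporadics — confirming that $G$ already lies on the list.

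The hard part is the sheer volume of pathological configurations that must be ruled out rather than any single conceptual leap, and the two notoriously delicate pieces are the \emph{uniqueness case} within characteristic $2$ type and the complete \emph{quasithin} classification; these demand the most intricate weak-closure and amalgam arguments and are precisely where the original proof left gaps, subsequently closed by the Aschbacher--Smith quasithin volumes and the ongoing revision program of Gorenstein, Lyons, Solomon and collaborators.
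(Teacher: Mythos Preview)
The paper does not prove this theorem at all. Theorem~\ref{cfsg} is stated as the Classification of Finite Simple Groups, a deep external result that the authors simply quote (with references to \cite{wilson_2009}, \cite{gorenstein1994classification}, and \cite{atlas}) in order to organize their case-by-case analysis in Section~\ref{sec:simples}. There is no accompanying proof in the paper, nor is one expected: CFSG occupies tens of thousands of journal pages and is used here purely as a black box.

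Your proposal is therefore not comparable to anything in the paper. What you have written is a competent high-level roadmap of the actual CFSG proof program --- Feit--Thompson, the Brauer--Fowler philosophy, the Gorenstein--Walter dichotomy into component type versus characteristic~$2$ type, signalizer functors and standard form problems on one side, the quasithin and uniqueness case analyses on the other, with recognition theorems as the endgame. As an outline it is accurate in spirit, but it is not a proof and could not be turned into one within the scope of any single paper; each clause you wrote stands for hundreds or thousands of pages. In the context of this paper, the correct ``proof'' is simply a citation.
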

If $G$ is a finite simple group of Lie type, then $G=L_n(q)$ or $G=\negphantom{x}\phantom{x}^dL_n(q)$ with $d\in\{2,3\}$ and $L\in\{A,B,C,D,E,F,G\}$. A group is said to be a \emph{version of $L_n(q)$} (respectively~$\negphantom{x}\phantom{x}^dL_n(q)$) if it is a central quotient of the universal group of type $L_n(q)$ (respectively~$\negphantom{x}\phantom{x}^dL_n(q)$). For example, the universal group of type $A_n(q)$ is $\spl_{n+1}(q)$ and the simple version of $A_n(q)$ is $\psl_{n+1}(q)$ (see~\cite{gorenstein1994classification} for a detailed discussion of versions of groups of Lie type). We call versions of $L_n(q)$ \emph{untwisted} and versions of $\negphantom{x}\phantom{x}^dL_n(q)$ with $d\in\{2,3\}$ \emph{twisted}.

For each of the (families of) finite simple groups much is known about their structure, usually including a classification of (maximal) subgroups and detailed descriptions of the centralizers they contain. We take this information mostly from~\cite{gorenstein1994classification}, ~\cite{wilson_2009}, and~\cite{atlas}.

Inspired by~\cite{manna2021forbidden}, we treat ``large" simple groups of Lie type with a uniform argument by identifying copies of central quotients of $\spl_3(q)$ inside of other simple groups. 

We employ a well-known construction of subgroups of simple groups via their associated Dynkin diagrams, where subgroups can be generically associated with sub-Dynkin diagrams. 
\begin{lem}\label{spl3_levi_factor}
	Let $G=L_n(q)$ be a finite simple group of Lie type and assume that $n$ is at least $3$. 
	Furthermore assume that $G$ is either untwisted or one of $\negphantom{x}\phantom{x}^2E_6(q)$ or $\negphantom{x}\phantom{x}^2D_n(q)=\mathrm{P\Omega}^-_{2n}(q)$ (in the latter case assume $n>3$).
	 Then $G$ contains a subgroup that is isomorphic to $\mathrm{SL}_3(q)$ divided by a group of scalar matrices. Moreover, the groups~$E_6(q)$,~$\negphantom{x}\phantom{x}^2E_6(q)$, $E_7(q)$, and~$E_8(q)$ contain a simple subgroup of type $F_4(q)$.
\end{lem}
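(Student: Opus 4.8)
The plan is to realise both kinds of subgroups as groups of $\sigma$-fixed points of $\sigma$-stable subsystem subgroups of the ambient simply connected algebraic group, exploiting the standard correspondence between sub-Dynkin diagrams and Levi subgroups. Write $G=\mathbf{G}^{\sigma}/Z(\mathbf{G}^{\sigma})$, where $\mathbf{G}$ is the simply connected simple algebraic group over $\overline{\mathbb{F}}_{q}$ of the relevant type and $\sigma$ is the associated Steinberg endomorphism: the $q$-power Frobenius $\phi$ in the untwisted case, and $\tau\phi$ with $\tau$ the order-two graph automorphism for ${}^{2}E_{6}(q)$ and ${}^{2}D_{n}(q)$; for every group occurring in the statement $\mathbf{G}^{\sigma}$ is quasisimple, which is what makes this description of $G$ valid. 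If $H\le\mathbf{G}^{\sigma}$, then $H\cap Z(\mathbf{G}^{\sigma})\le Z(H)$, so the image of $H$ in $G$ is $H$ modulo a central subgroup. Hence it suffices to exhibit a copy of $\mathrm{SL}_{3}(q)$ (respectively of $F_{4}(q)$) inside $\mathbf{G}^{\sigma}$: the former then yields $\mathrm{SL}_{3}(q)$ divided by a group of scalar matrices in $G$, because its kernel in $G$ lies in $Z(\mathrm{SL}_{3}(q))$, which consists of scalar matrices; the latter yields a faithful, hence simple, copy of $F_{4}(q)$, since $Z(F_{4}(q))=1$.

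For the first assertion I would fix a $\sigma$-stable set $J$ of simple roots of $\mathbf{G}$ spanning a subsystem of type $A_{2}$ that is fixed pointwise by the graph automorphism, so that $\sigma$ acts on the standard Levi $\mathbf{L}_{J}$ through the field endomorphism alone. In the untwisted case any two adjacent simple roots joined by a single bond work, and such a pair exists in every irreducible diagram of rank $\ge 3$ (e.g. two consecutive nodes on the long-root $A_{n-1}$-chain in types $B_{n}$, $C_{n}$, $F_{4}$). For ${}^{2}E_{6}(q)$ the two simple roots fixed by $\tau$ are adjacent and span an $A_{2}$. For ${}^{2}D_{n}(q)$ with $n>3$ the $n-2\ge 2$ simple roots outside the two exchanged end nodes span an $A_{n-2}$, hence contain an $A_{2}$. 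The derived subgroup $[\mathbf{L}_{J},\mathbf{L}_{J}]$ is simply connected of type $A_{2}$, i.e.\ isomorphic to $\mathrm{SL}_{3}$, and $\sigma$ restricts on it to the standard $q$-Frobenius, so $[\mathbf{L}_{J},\mathbf{L}_{J}]^{\sigma}\cong\mathrm{SL}_{3}(q)$ is a subgroup of $\mathbf{G}^{\sigma}$. For the classical families one can bypass algebraic groups: a Siegel-type Levi subgroup of $\mathrm{Sp}_{2n}(q)$, $\Omega_{2n+1}(q)$, $\Omega^{+}_{2n}(q)$, resp.\ $\Omega^{-}_{2n}(q)$ contains $\mathrm{SL}_{m}(q)$ with $m=n$ in the first three cases and $m=n-1$ in the last, and $\mathrm{SL}_{3}(q)\le\mathrm{SL}_{m}(q)$ whenever $m\ge 3$, which holds under the rank hypotheses (the linear case $\mathrm{PSL}_{n+1}(q)$ being immediate).

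For the second assertion I would invoke the classical fact, due to Steinberg, that the fixed-point subgroup $\mathbf{E}_{6}^{\tau}$ of the order-two graph automorphism $\tau$ is a simple algebraic group of type $F_{4}$. Since $\tau$ commutes with $\phi$, we get $F_{4}(q)=(\mathbf{E}_{6}^{\tau})^{\phi}\le\mathbf{E}_{6}^{\phi}$; moreover any element fixed by both $\tau$ and $\phi$ is fixed by $\tau\phi$, so $F_{4}(q)=\mathbf{E}_{6}^{\tau}\cap\mathbf{E}_{6}^{\phi}\le\mathbf{E}_{6}^{\tau\phi}$, the simply connected finite group for ${}^{2}E_{6}(q)$. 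For $E_{7}$ and $E_{8}$ I would use the $E_{6}$-subdiagram of the $E_{7}$-diagram and the $E_{7}$-subdiagram of the $E_{8}$-diagram, which give $\phi$-equivariant inclusions $\mathbf{E}_{6}\le\mathbf{E}_{7}\le\mathbf{E}_{8}$ of simply connected groups as derived subgroups of Levi subgroups, hence $F_{4}(q)\le\mathbf{E}_{6}^{\phi}\le\mathbf{E}_{7}^{\phi}\le\mathbf{E}_{8}^{\phi}$. Passing to the simple quotients only factors out central subgroups, which meet $F_{4}(q)$ trivially, so $E_{6}(q)$, ${}^{2}E_{6}(q)$, $E_{7}(q)$, $E_{8}(q)$ each contain a simple subgroup of type $F_{4}(q)$; this also settles the first assertion for these four groups, since $F_{4}$ has an $A_{2}$ Levi and hence $\mathrm{SL}_{3}(q)\le F_{4}(q)$.

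Two ingredients genuinely require citation rather than hand-checking: the isomorphism $\mathbf{E}_{6}^{\tau}\cong\mathbf{F}_{4}$ of algebraic groups and the quasisimplicity of $\mathbf{G}^{\sigma}$ for the groups at hand — both are available in \cite{gorenstein1994classification}, with the latter also in \cite{wilson_2009}. The one real subtlety, and the point I expect to be the main obstacle, is the insistence that $\sigma$ act on the chosen $A_{2}$ as a \emph{pure} field endomorphism; this is exactly why no analogous construction is claimed for ${}^{2}A_{n}(q)$, ${}^{3}D_{4}(q)$, or ${}^{2}D_{3}(q)\cong\mathrm{PSU}_{4}(q)$, whose Dynkin diagrams possess no $A_{2}$ subdiagram fixed pointwise by the relevant diagram automorphism, so that the construction only produces $\mathrm{SL}_{3}(q^{2})$, $\mathrm{SL}_{3}(q^{3})$, or a unitary group in place of $\mathrm{SL}_{3}(q)$.
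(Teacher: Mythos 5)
Your proposal is correct and follows essentially the same route as the paper's proof: you pick an $A_2$ subdiagram fixed pointwise by the graph automorphism so that the twisting acts on the corresponding subsystem subgroup purely as a field endomorphism, yielding $\mathrm{SL}_3(q)$ modulo a central (scalar) subgroup, and you obtain $F_4(q)$ as the fixed points of the order-two graph automorphism of (simply connected) $E_6$, embedded into $E_7(q)$ and $E_8(q)$ via the $E_6$ subdiagram. The only differences are cosmetic: you phrase the construction with Steinberg endomorphisms of the algebraic group (and offer an optional explicit Levi-subgroup shortcut for the classical families), whereas the paper takes fixed points of $\sigma\circ f$ inside the finite untwisted groups $E_6(q^2)$ and $D_n(q^2)$, citing the same results from Gorenstein--Lyons--Solomon.
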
 
\begin{proof}
	First assume that $G$ is untwisted. Since $n\geq 3$ holds, the Dynkin diagram associated to $G$ contains a sub-diagram of type
	$A_2$ (i.e, a single edge). By~\cite[Proposition 2.6.2]{gorenstein1994classification}, we have that $G=L_n(q)$ then contains a subgroup
	that is a version of $A_2(q)$ (the discussion before~\cite[Proposition 2.6.2]{gorenstein1994classification} shows that we have $n=d_1=1$ in Part b) of the Proposition in the untwisted case). Each version of $A_2(q)$ is a quotient of $SL_3(q)$ by a central subgroup (see~\cite[Theorem 2.2.6 b)]{gorenstein1994classification}), so the first claim holds for untwisted groups. If $G$ is $\negphantom{x}\phantom{x}^2E_6(q)$ or $\negphantom{x}\phantom{x}^2D_n(q)$ with $n>3$, we consider the Dynkin diagram of the corresponding untwisted version:
	\begin{figure}[h]
		\begin{center}
			\begin{tikzpicture}
				\node[draw=none] at (0,1) {$E_6$:};
				\node[draw=none] at (0,-0.45) {};
				\draw[fill=black] (0,0) circle (2pt);
				\draw[fill=black] (1,0) circle (2pt);
				\draw[fill=black] (2,0) circle (2pt);
				\draw[fill=black] (3,0) circle (2pt);
				\draw[fill=black] (4,0) circle (2pt);
				\draw[fill=black] (2,1) circle (2pt);
				\draw[thick] (0,0) to (1,0);
				\draw[thick] (1,0) to (2,0);
				\draw[thick] (2,0) to (3,0);
				\draw[thick] (3,0) to (4,0);
				\draw[thick] (2,0) to (2,1);
			\end{tikzpicture}	\hspace{15pt}	
			\begin{tikzpicture}
				\node[draw=none] at (0,1) {$D_n$:};
				\draw[fill=black] (0,0) circle (2pt);
				\draw[fill=black] (3,0) circle (2pt);
				\draw[fill=black] (4,-0.5) circle (2pt);
				\draw[fill=black] (4,0.5) circle (2pt);
				\draw[thick] (0,0) to (0.3,0);
				\draw[thick,dashed] (0.3,0) -- (2.7,0);
				\draw[thick] (2.7,0) to (3,0);
				\draw[thick] (3,0) to (4,0.5);
				\draw[thick] (3,0) to (4,-0.5);
			\end{tikzpicture}
		\end{center}
	\end{figure}
	
	In any case, the Dynkin diagram admits an automorphism of order $2$, say $\sigma$, that induces an automorphism of $H\in\{E_6(q^2),
	D_n(q^2)\}$, which we again call $\sigma$. Let $f$ denote the automorphism of $H$ induced by the Frobenius automorphism of $\mathbb{F}_{q^2}$ over $\mathbb{F}_q$ and write $\alpha:=\sigma\circ f$. Then, by definition, $G$ is equal to the set of fixed points $\mathrm{Fix}_\alpha(H)$ of $\alpha$ in $H$. Considering the Dynkin diagram corresponding to $H$, we note that we can always choose a sub-diagram of type $A_2$ that is point-wise fixed by $\sigma$. By definition the corresponding version of $A_2(q^2)$ in $H$, say $S\leq H$, is point-wise fixed by $\sigma$ as well.
	In turn, $G=\mathrm{Fix}_\alpha(H)$ contains a subgroup $S\cap \mathrm{Fix}_\alpha(H)=\mathrm{Fix}_\alpha(S)=\mathrm{Fix}_f(S)$ that
	is a version of $A_2(q)$, since, by choice of $S$ and $f$, we have that $f$ acts on $S$ as the corresponding Frobenius automorphism (see~\cite[Theorem 2.1.2, Proposition 2.1.10]{gorenstein1994classification}). That finishes the proof of the first claim.

	Towards the second claim, consider the group $F_4(q)$. By~\cite{gorenstein1994classification}, $F_4(q)$ can be defined as the set of fixed points of the diagram automorphism in $E_6(q)$. Thus, by
	the construction of $\negphantom{x}\phantom{x}^2E_6(q)$ discussed above, we see a copy of $F_4(q)$ inside of $\negphantom{x}\phantom{x}^2E_6(q)$, given by the fixed points of the Frobenius automorphism acting on $F_4(q^2)\leq E_6(q^2)$. Both $E_7(q)$ and $E_8(q)$ contain a version of $E_6(q)$, which can again be seen in terms of Dynkin sub-diagrams, and thus also a version of $F_4(q)$. The version of $E_6(q)$ has a center of order $1$ or $3$, so the corresponding version of $F_4(q)$ is simple, since for $q>2$, the simple group $F_4(q)$ has trivial Schur-multiplier while $F_4(2)$ has Schur-multiplier $C_2$ (see~\cite[Table 6.1.2]{gorenstein1994classification}). 
\end{proof}

\section{Induced cycles in power graphs}
We begin by recalling that induced paths, and thus also cycles, in the power graph have an alternating nature. This observation is well-known and can be found in~\cite{cameron2021finite}, for example. To reveal the alternating structure we have to consider the directed power graph.
\begin{lem}\label{lem:paths_alternating}
	Let $\wp$ be an induced path of length at least $3$ in $\pow(G)$, then the directions of consecutive arcs in $\mathrm{D\!\pow(G)}[\wp]$ alternate.
\end{lem}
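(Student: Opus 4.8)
The plan is to work in the directed power graph $\mathrm{D\!\pow(G)}$ and argue by contradiction: suppose two consecutive arcs along $\wp$ point the same way, say $\wp$ contains a sub-path $x_1 \to x_2 \to x_3$ with both arcs directed "upward" (the downward case is symmetric, and the bidirectional case is handled separately below). By definition of $\mathrm{D\!\pow(G)}$, the arc $x_1 \to x_2$ means $x_2$ is a power of $x_1$, i.e.\ $x_2 \in \langle x_1 \rangle$, and similarly $x_3 \in \langle x_2 \rangle$. Since $\langle x_2 \rangle \subseteq \langle x_1 \rangle$, we get $x_3 \in \langle x_1 \rangle$, so $x_1$ and $x_3$ are joined by an edge in $\pow(G)$. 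But $x_1$ and $x_3$ are at distance $2$ along the induced path $\wp$, so this edge is a chord — contradicting the assumption that $\wp$ is induced. Hence no two consecutive arcs point the same way when neither is bidirectional.

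The remaining point is to rule out bidirectional arcs entirely on an induced path of length at least $3$. An arc between $x$ and $y$ is bidirectional precisely when $x \in \langle y \rangle$ and $y \in \langle x \rangle$, which forces $\langle x \rangle = \langle y \rangle$. The key observation is that if $\langle x \rangle = \langle y \rangle$, then $x$ and $y$ have exactly the same closed neighbourhood in $\pow(G)$: a vertex $z$ is adjacent to $x$ iff $z \in \langle x \rangle$ or $x \in \langle z \rangle$, and the same condition with $x$ replaced by $y$ is equivalent since $\langle x \rangle = \langle y \rangle$. Two adjacent vertices with identical closed neighbourhoods (so-called true twins) cannot both lie on an induced path of length at least $3$: if $x$ and $y$ are consecutive on $\wp$ and $\wp$ has a third vertex $w$, then $w$ is adjacent to exactly one of $x, y$ in $\wp$, but being adjacent to one forces adjacency to the other, producing a chord. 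Therefore every arc along $\wp$ is strictly directed, and combined with the first paragraph, consecutive arcs must alternate in direction.

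The only mild subtlety — and the one place to be careful — is the bookkeeping at the endpoints and the interaction of the two cases: one should phrase the alternation claim for an arbitrary pair of consecutive arcs $x_{i-1} \to/\!\leftarrow x_i \to/\!\leftarrow x_{i+1}$ and check that, having excluded bidirectional arcs, the only remaining forbidden configuration is "both up" or "both down", each of which yields the chord $\{x_{i-1}, x_{i+1}\}$ exactly as above. I do not expect any real obstacle here; the argument is entirely a matter of unwinding the definitions of $\mathrm{D\!\pow(G)}$ and of an induced path, and it is essentially the standard observation recorded in~\cite{cameron2021finite}.
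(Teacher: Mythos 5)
Your proof is correct and follows essentially the same route as the paper: two consecutive arcs pointing the same way yield, by transitivity of the power relation, a chord between vertices at distance two, contradicting that $\wp$ is induced. Your additional explicit treatment of bidirectional arcs (via the true-twin observation when $\langle x\rangle=\langle y\rangle$) is correct and a reasonable extra precaution; the paper leaves that case implicit, since it succumbs to the same chord argument.
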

\begin{proof}
	If $x,y,z$ are consecutive vertices in $\wp$ such that $x$ points towards $y$ and $y$ points towards $z$ in the
	directed power graph, then, by definition, there would also be an arc pointing from $x$ towards $z$, contradicting
	that $\wp$ is an induced path.
\end{proof}

We formalize this in terms of the following definition.

\begin{dfn}
	Let $G$ be a finite group and let $\wp$ be an induced path in $\pow(G)$. Then each vertex contained in $\wp$
	has only in-edges or only out-edges in the directed graph induced by $\wp$ in $\mathrm{D\!\pow(G)}$. Accordingly we say vertices are
	\emph{in-vertices} or \emph{out-vertices} of $\wp$ and we write $\wp^-$ (respectively $\wp^+$) to denote the set of 
	in-vertices (respectively out-vertices) of $\wp$. The notion naturally applies for induced cycles as well.
\end{dfn} 

The notion of in-vertices allows us to iteratively transfer cycles into reduced forms.
\begin{dfn}\label{def:power-reduced}
We say a path (resp. cycle) in $\pow(G)$ is \emph{power-reduced} if all in-vertices have prime order.
\end{dfn}

\begin{lem}\label{lem:primes_alternate}
	If $\mathfrak{C}$ is a power-reduced cycle in $\pow(G)$, then consecutive in-vertices have distinct prime orders.
\end{lem}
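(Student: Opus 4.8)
The plan is to argue by contradiction: suppose $\mathfrak{C}$ is power-reduced and two consecutive in-vertices $x$ and $y$ both have the same prime order $p$. Consecutive in-vertices of a cycle are joined by a path of length $2$ through a single out-vertex $z$ (by Lemma~\ref{lem:paths_alternating}, directions alternate, so between two in-vertices there is exactly one out-vertex), and $z$ points towards both $x$ and $y$ in $\mathrm{D\!\pow}(G)$. Since $x$ and $y$ have prime order $p$, the only powers of $z$ of order $p$ lie in a single cyclic subgroup: indeed $\langle z\rangle$ has a unique subgroup of order $p$, hence a unique element of order $p$ when... wait, a cyclic group of order divisible by $p$ has exactly $p-1$ elements of order $p$, all lying in its unique subgroup of order $p$. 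So $x,y\in\langle z\rangle$ and both lie in the unique subgroup $C$ of order $p$ of $\langle z\rangle$; thus $\langle x\rangle=\langle y\rangle=C$, which forces $x$ and $y$ to be powers of each other, i.e.\ adjacent in $\pow(G)$.

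The key step is then to derive the contradiction from this adjacency. If $x$ and $y$ are adjacent, then in the cycle $\mathfrak{C}$ they together with the out-vertex $z$ between them would form a triangle, or more precisely a chord: unless $x$ and $y$ are \emph{the only} two vertices of $\mathfrak{C}$ other than $z$, the edge $\{x,y\}$ is a chord of the induced cycle $\mathfrak{C}$, contradicting that $\mathfrak{C}$ is induced (as a subgraph) and has length $\geq 4$. If $\mathfrak{C}$ has length exactly $3$, i.e.\ $\mathfrak{C}=(x,z,y)$, then this is a triangle and there is nothing being violated as a chord, but then $x$ and $y$ are the two consecutive in-vertices and the cycle has only one out-vertex; here I would note either that the statement is vacuous/trivial in this degenerate case, or — more likely — that the intended reading is that $\mathfrak{C}$ has length $\geq 4$, matching the chordality context (induced cycles of length $\geq 4$), so the chord argument applies cleanly. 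I would state at the outset that we may assume $\mathfrak{C}$ has length at least $4$.

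I should also handle the possibility that $x=y$: but consecutive vertices of a cycle are distinct, so this does not arise. One more subtlety: I claimed the only powers of $z$ of order $p$ lie in the unique order-$p$ subgroup of $\langle z\rangle$; this is immediate since any power of $z$ generates a subgroup of $\langle z\rangle$, and a cyclic group has at most one subgroup of each order. Hence any two powers of $z$ of the same prime order $p$ generate the same subgroup and are therefore adjacent in $\pow(G)$. This is the crux.

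The main obstacle I anticipate is purely the degenerate length-$3$ case and making sure the definitions (Definition~\ref{def:power-reduced} talks about cycles, and Lemma~\ref{lem:paths_alternating} is stated for paths of length $\geq 3$) genuinely give the alternating structure for the cycle $\mathfrak{C}$ — i.e.\ that $\mathfrak{C}$ has even length and its in-vertices and out-vertices strictly alternate, so that "consecutive in-vertices" is well-defined and any two of them are separated by a single out-vertex. Once that structural fact is in hand, the rest is the short cyclic-group argument above. So concretely the steps are: (1) reduce to $|\mathfrak{C}|\geq 4$ and record that $\mathfrak{C}$ has alternating in/out vertices; (2) take consecutive in-vertices $x,y$ with common out-neighbor $z$, both powers of $z$; (3) if $|x|=|y|=p$ prime, conclude $\langle x\rangle=\langle y\rangle$, so $x\sim y$; (4) observe $\{x,y\}$ is then a chord of the induced cycle $\mathfrak{C}$, contradiction.
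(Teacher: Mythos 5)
Your proposal is correct and follows essentially the same route as the paper's proof: both rest on the observation that the two consecutive in-vertices lie in the cyclic group generated by their common out-vertex, so equal prime orders would force them to generate the same subgroup and hence be adjacent, contradicting that the cycle is induced. Your extra care about the alternating structure and the degenerate short-cycle case is fine but not needed beyond what the paper already assumes.
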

\begin{proof}
	If $x$ and $y$ are consecutive in-vertices in $\mathfrak{C}$, then they are joined via a common out-vertex $z$. By definition, $x,y\in \langle z\rangle$ holds and $x$ and $y$ have prime orders. Since $x$ and $y$ are not joined via an edge, they must have distinct orders.
\end{proof}
\begin{lem}\label{lem:assume_4_cycle_pow_red}
	If $\mathfrak{C}$ is an induced $4$-vertex cycle in $\pow(G)$, then there is also a power-reduced $4$-vertex cycle
	induced in $\pow(G)$.
\end{lem}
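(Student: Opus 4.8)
The plan is to exploit the alternating direction structure of induced cycles. By Lemma~\ref{lem:paths_alternating} (which, as remarked, applies to induced cycles as well), the arcs of an induced $4$-vertex cycle $\mathfrak{C}$ alternate in direction, so $\mathfrak{C}$ has exactly two out-vertices and two in-vertices, alternating around the cycle. Write $\mathfrak{C}$ as $a - x - b - y - a$ with $a,b$ its out-vertices and $x,y$ its in-vertices. Every cycle-edge then points from an out-vertex to an in-vertex, so $x,y\in\langle a\rangle\cap\langle b\rangle$, and both $\langle a\rangle$ and $\langle b\rangle$ are cyclic; also $\{a,b\}$ and $\{x,y\}$ are non-edges of $\pow(G)$. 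I would first observe that no cycle-edge is bidirectional: if, say, $\langle x\rangle=\langle a\rangle$, then $x$ and $a$ have identical closed neighborhoods in $\pow(G)$, forcing $y\sim x$, a contradiction. Hence $\langle x\rangle$ and $\langle y\rangle$ are \emph{proper} subgroups of both $\langle a\rangle$ and $\langle b\rangle$.

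Next I would reduce $x$ and $y$ simultaneously. Since $x\not\sim y$ and both elements lie in the cyclic group $\langle a\rangle$, the subgroups $\langle x\rangle$ and $\langle y\rangle$ are incomparable; hence there is a prime $p$ with $v_p(|x|)>v_p(|y|)$ and a prime $\ell$ with $v_\ell(|y|)>v_\ell(|x|)$, and necessarily $p\neq\ell$. Let $x'\in\langle x\rangle$ be the element of order $p$ and $y'\in\langle y\rangle$ the element of order $\ell$. Both lie in $\langle a\rangle\cap\langle b\rangle$, since $\langle x\rangle\leq\langle a\rangle\cap\langle b\rangle$ and $\langle y\rangle\leq\langle a\rangle\cap\langle b\rangle$.

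Finally I would check that $a - x' - b - y' - a$ is the desired power-reduced induced $4$-cycle. The four vertices are pairwise distinct: $x',y'\notin\{a,b\}$ because $a,b\notin\langle x\rangle\cup\langle y\rangle$ (these subgroups being proper in $\langle a\rangle$ and $\langle b\rangle$), and $x'\neq y'$ because $p\neq\ell$. The pairs $\{a,x'\},\{x',b\},\{b,y'\},\{y',a\}$ are edges since $x',y'\in\langle a\rangle\cap\langle b\rangle$; the pair $\{a,b\}$ is a non-edge by assumption; and $\{x',y'\}$ is a non-edge because $x'$ and $y'$ have distinct prime orders, so neither lies in the cyclic group generated by the other. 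Thus $\{a,x',b,y'\}$ induces a $4$-cycle whose in-vertices $x',y'$ have prime order, which is power-reduced by Definition~\ref{def:power-reduced}.

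The main obstacle is that the naive approach — replacing $x$ by an arbitrary prime-order element of $\langle x\rangle$ — can fail, since such an element may become adjacent to $y$; for instance with $|x|=12$ and $|y|=18$ inside a common cyclic group, every prime-order element of $\langle x\rangle$ is a power of $y$. The resolution is precisely to pick $p$ and $\ell$ as the witnesses of the two incomparabilities $\langle x\rangle\not\leq\langle y\rangle$ and $\langle y\rangle\not\leq\langle x\rangle$; these are automatically distinct, which is exactly what guarantees $x'\not\sim y'$. The remaining points — ruling out bidirectional cycle-edges and checking distinctness of the four vertices — are routine bookkeeping.
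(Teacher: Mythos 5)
Your proof is correct and follows essentially the same route as the paper's: observe that the two in-vertices are non-adjacent yet lie in a common cyclic overgroup, so their orders are incomparable, extract two distinct primes witnessing the incomparability, and replace each in-vertex by a power of that prime order. You simply spell out the bookkeeping (non-bidirectional cycle-edges, distinctness of the new vertices, verification of all six adjacencies) that the paper leaves implicit.
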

\begin{proof}
	The two in-vertices of $\mathfrak{C}$, say $x$ and $y$, are not joined in $\pow(G)$, despite lying in a common cyclic overgroup. Thus, their orders must be incomparable, i.e., there are distinct primes $p$ and $q$ such that $p$ divides $|x|$ more often than $|y|$ and $q$ divides $|y|$ more often than $|x|$. We can then replace $x$ by a power of $x$ of order $p$ and replace $y$ by a power $y$ of order $q$ and still obtain an induced $4$-cycle.	
\end{proof}

\begin{lem}\label{lem:assume_cycle_pow_red}
	Let $\mathfrak{C}$ be an induced cycle with at least $4$ vertices in $\pow(G)$. Then $\pow(G)$ contains a power-reduced induced cycle with at least $4$ vertices.
\end{lem}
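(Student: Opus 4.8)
The plan is to argue by contradiction: assume $\pow(G)$ has an induced cycle on at least four vertices but no power-reduced one. By Lemma~\ref{lem:assume_4_cycle_pow_red}, $\pow(G)$ then has no induced $4$-cycle at all, and since induced cycles in power graphs have even length (the alternating structure of Lemma~\ref{lem:paths_alternating}), every induced cycle of $\pow(G)$ has at least six vertices. Among all of these I would fix one, $\mathfrak{C}=(v_1,\dots,v_n)$ with in-vertices at the odd positions, first of minimum length $n$ and then of minimum value $\Phi(\mathfrak{C}):=\sum_{i\text{ odd}}|v_i|$. Since $\mathfrak{C}$ is not power-reduced it has an in-vertex of non-prime order, and the whole proof consists of exploiting this double minimality to rule that out.

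The core is a rigidity statement: for every in-vertex $x$ of $\mathfrak{C}$ and every divisor $e$ of $|x|$ with $1<e<|x|$, the element $u:=x^{|x|/e}$ (of order $e$) lies in $\langle v_j\rangle$ for \emph{every} vertex $v_j$ of $\mathfrak{C}$. I would prove this by attempting to substitute $u$ for $x$ and examining $J:=\{\,j : u\sim v_j\text{ in }\pow(G)\,\}$. One checks that $J$ contains the two cycle-neighbours of $x$, and that $u\in\langle v_j\rangle$ --- not the other way around --- for every $j\in J$, using that a cycle-neighbour of the in-vertex $x$ has order at least $|x|>e$. Deleting $x$ turns $\mathfrak{C}$ into an induced path whose two ends lie in $J$, and $u$ is adjacent precisely to the $v_j$ with $j\in J$; so if $J$ misses some vertex of that path, a maximal run of path-vertices outside $J$, flanked by vertices $v_a,v_b\in J$, yields an induced cycle $(u,v_a,\dots,v_b)$ on at least four vertices. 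This cycle cannot have exactly four vertices (there are no induced $4$-cycles), so if it is shorter than $\mathfrak{C}$ it contradicts minimality of $n$; and if it is as long as $\mathfrak{C}$, then $J$ must be exactly the two neighbours of $x$, so replacing $x$ by $u$ produces an induced $n$-cycle with strictly smaller $\Phi$, again a contradiction. Hence $J$ exhausts all indices, which (together with $u\in\langle x\rangle$) gives $u\in\langle v_j\rangle$ for all $j$.

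With this in hand, the rest is arithmetic of element orders. If some non-prime in-vertex $v$ had order divisible by two distinct primes $p$ and $q$, the rigidity statement with $e=|v|/p$ and with $e=|v|/q$ would put $v^{p}$ and $v^{q}$ into every $\langle v_j\rangle$; since these generate $\langle v\rangle$, we would get $v\in\langle v_j\rangle$ and hence $v\sim v_j$ for all $j$, impossible because $v$ has non-neighbours in the induced cycle $\mathfrak{C}$ of length $\geq 6$. So every non-prime in-vertex has order $p^{a}$ with $a\geq 2$. Fix such a $v$, let $v'$ be one of its (out-vertex) neighbours and $v''$ the other in-vertex neighbour of $v'$; then $v$ and $v''$ are non-adjacent powers of $v'$, so $|v|=p^{a}$ neither divides nor is divided by $|v''|$. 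From $|v''|\nmid p^{a}$ one gets a prime $q\neq p$ dividing $|v''|$, while the rigidity statement with $e=p^{a-1}$ forces $p^{a-1}\mid|v''|$, hence $p\mid |v''|$. Thus $|v''|$ is divisible by the two distinct primes $p$ and $q$ --- but $v''$ is again an in-vertex of $\mathfrak{C}$, contradicting the previous paragraph. This contradiction finishes the proof.

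The step I expect to be the real obstacle is exactly the case inside the rigidity statement where $u$ turns out to be adjacent to the \emph{entire} cycle: then no local surgery shortens $\mathfrak{C}$ or decreases $\Phi$, and one genuinely needs the global number-theoretic bookkeeping above. Such a universally adjacent $u$ really can occur --- for instance in $\pow(C_{210})$ --- so it cannot simply be excluded; the point is that it is harmless precisely because it must lie in every $\langle v_j\rangle$, and one then has to squeeze a contradiction out of the prime factorizations of the in-vertex orders. A small but essential ingredient is that several of the substitutions only produce contradictions because $n\geq 6$ (an in-vertex of a long induced cycle has non-neighbours), which is why one must first pass to the $4$-cycle-free situation via Lemma~\ref{lem:assume_4_cycle_pow_red}.
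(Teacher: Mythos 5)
Your proof is correct, but after the shared opening move it takes a genuinely different route from the paper's. Both arguments substitute a proper power $u$ of an in-vertex and face the same dichotomy: either the new adjacencies of $u$ yield another induced cycle (shorter, or of the same length but ``cheaper''), or $u$ is adjacent to all of $\mathfrak{C}$. The paper resolves the second case constructively: it replaces the in-vertex by a power of prime order $p$, observes that two consecutive in-vertices cannot both be $p$-elements, and then the two out-vertices flanking an in-vertex whose order is not a power of $p$, together with the new element and a prime-order power of that in-vertex, form a power-reduced $4$-cycle outright; the first case is handled by informally repeating the argument on a smaller cycle. You instead argue by contradiction with a double minimality (cycle length, then the sum $\Phi$ of in-vertex orders) in place of the paper's iteration, prove in the critical case the rigidity statement $u\in\langle v_j\rangle$ for every cycle vertex $v_j$, and then extract a contradiction from the prime factorizations of the in-vertex orders. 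Your route buys a clean, explicit treatment of termination and a reusable structural fact; its cost is length and non-constructiveness --- the paper's two-line observation about consecutive in-vertices would have closed your ``universally adjacent'' case immediately, making the entire arithmetic second half unnecessary. Two small points to tighten, both immediate from facts you already record: you should note that $u$ is distinct from every vertex of $\mathfrak{C}$ (any cycle vertex lying in $\langle x\rangle$ is a neighbour of $x$, hence has order at least $|x|>e$), and in the last step the existence of a prime $q\neq p$ dividing $|v''|$ needs both halves of the incomparability, since $|v''|\nmid p^{a}$ alone would still allow $|v''|=p^{b}$ with $b>a$.
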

\begin{proof}
	There is nothing to show for $4$-cycles due to Lemma~\ref{lem:assume_4_cycle_pow_red}. Assume that $\mathfrak{C}$ has more than $4$ vertices. Since
	in- and out-vertices alternate in $\mathfrak{C}$, there must be at least $6$ vertices and $3$ out-vertices.	
	
	Let $g\in\mathfrak{C}^-$ and replace it by $g'=g^{\frac{|g|}{p}}$ of prime order $p$ to obtain another induced subgraph $\mathfrak{C}'$.
	Then, by definition, edges $\{x,g\}$ incident with $g$ in $\mathfrak{C}$ induce edges $\{x,g'\}$ in $\mathfrak{C}'$. If
	$\mathfrak{C}'$ is not a cycle, then there is some vertex $x$ of $\mathfrak{C}\setminus\{g\}$ that is not joined with $g$ in $\pow(G)$
	but joined with $g'$. But then, since $\mathfrak{C}'$ has at least $6$ vertices, the only way to not have an induced cycle on more than $3$ vertices in $\mathfrak{C}'$ (in which case we repeat the argument with a smaller cycle) is that
	$g'$ is joined with every $x\in\mathfrak{C}\setminus\{g\}$. Since we assume $\mathfrak{C}'$ to contain at least $3$ out-vertices and
	consecutive in-vertices cannot both have their order be a power of $p$ (otherwise they would be joined by definition, since they lie in a common cyclic overgroup), there must be two out-vertices, say $h_1$ and $h_2$, that are joined with $g'$ and such that their common in-vertex, say $x$, has an order distinct from $p$. Let $x'$ be a power of $x$ of order $q$ for a prime distinct from $p$.
	Then $\pow(G)[\{g',h_1,x',h_2\}]$ forms an induced $4$-cycle with the required property.
\end{proof}

We point out that the reduction may in general produce shorter cycles, with $4$-cycles forming a lower bound.
There is also a correspondence between power-reduced paths and cycles in $\pow(G)$ and certain paths and cycles in the commuting graph $\mathrm{Com}(G)$.
\begin{lem}\label{lem:connection_pow_com}
	Let $M\subseteq G$ be a set of group elements. If $\pow(G)[M]$ is a power-reduced path (resp. cycle), then
	$\mathrm{Com}(G)[M^-]$ is a path (resp. cycle) in the commuting graph, in which consecutive vertices have distinct prime orders.
	On the other hand, such a path (resp. cycle) gives rise to a power-reduced path (resp. cycle) in $\pow(G)$ by
	joining consecutive vertices, say $x$ and $y$, via a common out-vertex $xy$. 
\end{lem}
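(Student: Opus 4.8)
The plan is to pass back and forth between $\pow(G)$ and $\mathrm{Com}(G)$ by matching each out-vertex of a power-reduced path or cycle with the two in-vertices it joins, and conversely by matching each edge of a suitable path or cycle in $\mathrm{Com}(G)$ with a product element that plays the role of an out-vertex.

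For the first implication, let $\wp$ be the power-reduced path (resp.\ cycle) $\pow(G)[M]$, and list its vertices in the order they appear along $\wp$. By Lemma~\ref{lem:paths_alternating} the in- and out-vertices of $\wp$ alternate, so deleting the out-vertices leaves exactly $M^-$, still carrying a natural linear (resp.\ cyclic) order. If $x$ and $y$ are consecutive in this order, then in $\wp$ they are joined through a common out-vertex $z$, so $x,y\in\langle z\rangle$ by definition, whence $x$ and $y$ commute and form an edge of $\mathrm{Com}(G)$. As $\wp$ is power-reduced, all its in-vertices have prime order, and Lemma~\ref{lem:primes_alternate} (with its evident path analogue) shows that consecutive in-vertices carry distinct primes as orders. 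Hence $M^-$, in the inherited order, spans a path (resp.\ cycle) in $\mathrm{Com}(G)$ with the asserted property on consecutive orders.

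For the converse, start from a path (resp.\ cycle) $x_1,x_2,\dots$ in $\mathrm{Com}(G)$ whose vertices have prime orders $p_1,p_2,\dots$ with $p_i\neq p_{i+1}$ for consecutive indices. Since $x_i$ and $x_{i+1}$ commute and have coprime orders, $\langle x_i,x_{i+1}\rangle=\langle w_i\rangle$ is cyclic of order $p_ip_{i+1}$, where $w_i:=x_ix_{i+1}$, and both $x_i$ and $x_{i+1}$ are powers of $w_i$. Interleaving gives the sequence $x_1,w_1,x_2,w_2,x_3,\dots$; along it consecutive vertices are adjacent in $\pow(G)$, so it traces a path (resp.\ cycle) in $\pow(G)$ whose would-be out-vertices are the $w_i$ and whose would-be in-vertices are the $x_i$, all of prime order — and it is power-reduced provided one verifies that the sequence induces exactly this path (resp.\ cycle), with no chords.

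That last point is where I expect the real work to be. One must exclude that some $x_i$ is a power of a non-adjacent $x_j$ or $w_j$, and that some $w_i$ is a power of some non-adjacent $w_j$. The key observation is that every element of prime order in the cyclic group $\langle w_i\rangle$ of squarefree order $p_ip_{i+1}$ lies in its unique $p_i$- or $p_{i+1}$-Sylow subgroup, that is, in $\langle x_i\rangle$ or $\langle x_{i+1}\rangle$; so any forbidden comparability would place a prime-order element of one $\langle x_j\rangle$ inside another $\langle x_k\rangle$, or identify two such subgroups, and in every case this produces a commuting pair of non-consecutive vertices of the original $\mathrm{Com}(G)$-path. A short bookkeeping over how the unordered pairs $\{p_i,p_{i+1}\}$ and $\{p_j,p_{j+1}\}$ can overlap then yields the contradiction. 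These same observations also clarify the first implication: $\mathrm{Com}(G)[M^-]$ there is best read as the path (resp.\ cycle) spanned by $M^-$ in the inherited vertex order, since chords in the genuine induced subgraph can occur (e.g.\ when $G$ is abelian, so that $\mathrm{Com}(G)[M^-]$ is complete).
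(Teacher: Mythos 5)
The paper states this lemma without proof, treating it as immediate from Lemma~\ref{lem:paths_alternating} and Lemma~\ref{lem:primes_alternate}, and your argument is exactly the intended one: consecutive in-vertices lie in the cyclic group generated by their common out-vertex, hence commute and (being of prime order in a power-reduced path) have distinct prime orders; conversely, commuting elements $x_i,x_{i+1}$ of distinct prime orders satisfy $\langle x_i,x_{i+1}\rangle=\langle x_ix_{i+1}\rangle$, so interleaving the products recovers a power-reduced path or cycle. The ``bookkeeping'' you defer does go through along the lines you sketch: any adjacency in $\pow(G)$ between non-consecutive vertices of the interleaved sequence forces some $x_j$ (or both generators of some $w_j$) into $\langle w_i\rangle$, hence into $\langle x_i\rangle$ or $\langle x_{i+1}\rangle$, i.e.\ it identifies two of the cyclic groups $\langle x_k\rangle$ attached to non-consecutive vertices; so the construction yields an induced path (resp.\ cycle) precisely when the $\langle x_i\rangle$ are pairwise distinct, which holds in particular when the path in $\mathrm{Com}(G)$ is induced. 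Your final caveat is correct and worth recording: read literally, the forward statement fails, e.g.\ in $G=C_2\times C_2\times C_3=\langle a\rangle\times\langle b\rangle\times\langle y\rangle$ the sequence $(a,ay,y,by,b)$ is an induced power-reduced path in $\pow(G)$, yet $\mathrm{Com}(G)[\{a,y,b\}]$ is a triangle; so one should read the conclusion as ``$M^-$, in the inherited order, spans a path (resp.\ cycle) of $\mathrm{Com}(G)$'', and dually the converse needs the distinctness (or induced-ness) hypothesis just mentioned, which is indeed available where the paper applies it (the closing corollary assumes an induced path in $\mathrm{Com}(G)$).
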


\subsection{The four vertex cycle}
Given the results of the previous section, it makes sense to briefly take a closer look at cycles with exactly four vertices. The following lemma is a reformulation of~\cite[Lemma 3.1]{doostabadi}, adapted to our specific use case.

\begin{lem}\label{lem:char_prop_C4}	
	Assume $G$ does not contain elements whose order is divisible by $pqr$ or $p^2q^2$ for distinct primes
	$p$, $q$ and $r$.
	Then the power graph of $G$ has an induced subgraph forming a $4$-cycle if and only if
	there exist two elements $g$ and $h$ in $G$ with the following properties:
	\begin{enumerate}
		\item we have $|g|=|h|=q^m$ for a prime $q$ and $m>1$,
		\item $g^q=h^q$, but $\langle g\rangle\neq\langle h\rangle$
		\item there is a prime $p\neq q$ that divides $|C_G(\langle g,h\rangle)|$.
	\end{enumerate}		
\end{lem}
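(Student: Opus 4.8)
The plan is to prove both directions of the equivalence, using the structural restrictions on element orders in $G$ together with the reduction lemmas already established (in particular Lemmas~\ref{lem:assume_4_cycle_pow_red} and~\ref{lem:primes_alternate}).

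\textbf{Necessity.} Suppose $\pow(G)$ contains an induced $4$-cycle. By Lemma~\ref{lem:assume_4_cycle_pow_red} we may assume it is power-reduced, say on vertices $x, h_1, y, h_2$ in cyclic order, where $x, y$ are the in-vertices and $h_1, h_2$ the out-vertices. Then $x$ and $y$ have prime orders, and by Lemma~\ref{lem:primes_alternate} (or directly, as in the proof of Lemma~\ref{lem:assume_4_cycle_pow_red}) they have \emph{distinct} prime orders — say $|x| = p'$ and $|y| = q'$ with $p' \neq q'$. Now $x, y \in \langle h_1 \rangle$, so $|h_1|$ is divisible by $p'q'$; since $G$ has no element of order divisible by three distinct primes or by $p^2 q^2$, the order of $h_1$ must be exactly of the form $p'^a q'^b$ with $\min(a,b)=1$. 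Say $|h_1| = p'^a q'$ with $q'$ appearing to the first power only (the other case is symmetric). Replacing $h_1$ by $h_1^{q'}$, an element of order $p'^a$ still powering onto $x$, we may arrange that $|h_1|$ is a prime power $q^m$ for $q := p'$; similarly work on $h_2$. The key point is that $h_1$ and $h_2$ are \emph{not} adjacent in $\pow(G)$, so their cyclic groups are incomparable; but $x \in \langle h_1 \rangle \cap \langle h_2 \rangle$ forces a common prime, and the order restriction then forces $|h_1|$ and $|h_2|$ to be powers of the \emph{same} prime $q$ with $h_1^q = h_2^q = x$ (after normalizing $x$ to be the unique subgroup of order $q$ inside both). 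Since $h_1$ and $h_2$ are nonadjacent, $\langle h_1\rangle \neq \langle h_2\rangle$, giving properties (1) and (2) with $g := h_1$, $h := h_2$. Finally, $y$ commutes with and is nonadjacent to both $h_1, h_2$, so $y \in C_G(\langle g, h\rangle)$ has order a prime $q' \neq q$, giving (3).

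\textbf{Sufficiency.} Conversely, suppose $g, h$ satisfy (1)--(3). Let $p \neq q$ be the prime dividing $|C_G(\langle g, h\rangle)|$ from (3), and pick $z \in C_G(\langle g,h\rangle)$ of order $p$. Set $x := g^q = h^q$, an element of order $q^{m-1} \geq q$. I claim $\pow(G)[\{g, z, h, x\}]$ — better, after replacing $x$ by a suitable power, $\{g, zx', h, x'\}$ where we take products to realize the out-vertices as in Lemma~\ref{lem:connection_pow_com} — is an induced $4$-cycle. Concretely: $g$ and $h$ are adjacent to $x$ (it is a power of each); $g$ and $h$ are adjacent to $gz$ and $hz$ respectively (powers of these); and one checks $g \not\sim h$ (since $\langle g\rangle \neq \langle h\rangle$ and both have order $q^m$, neither lies in the other), $g \not\sim z$-type-vertex and $h \not\sim z$-type-vertex and $x \not\sim z$-type-vertex, all using that any adjacency would produce an element of order divisible by $p^2 q^2$ or by three distinct primes, contradicting the hypothesis on $G$. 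The cleanest route is to invoke Lemma~\ref{lem:connection_pow_com}: the elements $x$ (order $q$, after replacing by $g^{|g|/q}$) and $z$ (order $p$) commute and have distinct prime orders, so they form a $2$-vertex ``path'' in $\mathrm{Com}(G)$; but we need a $4$-cycle, so instead observe directly that $g, h$ both power onto $x$, both commute with $z$, and the four vertices $g, gz, h, \dots$ require a careful choice. I would present it as: verify $g^q = h^q =: x$ has order $q^{m-1}$; then $\{g, h, x, xz\}$ works once we check $xz$ has order $q^{m-1}p$, is a power of neither $g$ nor $h$ (else three primes or $p^2q^2$), powers onto $x$, and that $g, h$ are each nonadjacent to $xz$ while $g \not\sim h$ — this is the required induced $4$-cycle.

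\textbf{Main obstacle.} The delicate part is the sufficiency direction: assembling an \emph{induced} $4$-cycle from the data (1)--(3) requires choosing the fourth and the ``bridging'' vertices so that all four non-edges genuinely fail to be edges, and each such verification must be reduced to the order restriction on $G$. The hypothesis that $G$ has no element of order $pqr$ or $p^2q^2$ is exactly what rules out the unwanted chords, so the proof is really a bookkeeping exercise in which pairs could accidentally become adjacent and why each is forbidden. On the necessity side, the main subtlety is justifying that after the power-reduction both out-vertices can be taken to be powers of the \emph{same} prime; this uses that $\langle h_1\rangle$ and $\langle h_2\rangle$ are incomparable yet share the in-vertex $x$, combined once more with the order restriction to exclude a second shared prime.
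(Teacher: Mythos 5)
Your proposal has genuine problems in both directions; note also that the paper itself offers no proof of this lemma (it is quoted as a reformulation of Doostabadi's Lemma 3.1), so the argument has to stand entirely on its own. For sufficiency, your construction does not produce a cycle: on each vertex set you propose ($\{g,z,h,x\}$, $\{g,zx',h,x'\}$, $\{g,h,x,xz\}$) the induced subgraph is a tree. In $\{g,h,x,xz\}$, for instance, the only edges are $\{g,x\}$, $\{h,x\}$ and $\{x,xz\}$, precisely because, as you check yourself, $g\not\sim xz$, $h\not\sim xz$ and $g\not\sim h$; that is a star, not a $4$-cycle. In fact no induced $4$-cycle can contain both $g$ and $h$: a common neighbour $w$ of $g$ and $h$ must lie in $\langle g\rangle\cap\langle h\rangle=\langle g^q\rangle$ (the other adjacency patterns would force $h\in\langle g\rangle$, $g\in\langle h\rangle$, or both $\langle g\rangle,\langle h\rangle$ inside a single cyclic group), and any two elements of the cyclic $q$-group $\langle g^q\rangle$ are adjacent, so the two required common neighbours of the nonadjacent pair $g,h$ would be joined by a chord. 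The cycle must avoid $g$ and $h$ themselves: taking $z\in C_G(\langle g,h\rangle)$ of order $p$, the set $\{z,\;gz,\;g^q,\;hz\}$ induces a $4$-cycle, since $z$ and $g^q=h^q$ are powers of both $gz$ and $hz$, while $z\not\sim g^q$ by coprimality and $gz\not\sim hz$ because the $q$-part of $\langle gz\rangle$ is $\langle g\rangle\neq\langle h\rangle$. (No hypothesis on element orders of $G$ is needed for this direction.)

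For necessity, the gap sits exactly where you flag a ``subtlety''. Write $\langle h_i\rangle=P_i\times Q_i$ with $P_i$ the $p'$-part and $Q_i$ the $q'$-part. Your ``the other case is symmetric'' tacitly assumes both out-vertices carry their repeated prime over the \emph{same} prime. The mixed configuration $|h_1|=p'q'^{\,b}$, $|h_2|=p'^{\,a}q'$ with $a,b\geq 2$ (so $P_1=\langle x\rangle\leq P_2$ and $Q_2=\langle y\rangle\leq Q_1$) is compatible with $h_1\not\sim h_2$ and with the order hypothesis --- it occurs, for example, in the pullback of $\mathrm{SL}_2(3)$ and $(C_2\times C_2)\rtimes C_9$ over their common quotient $\alt(4)$, a $\{2,3\}$-group of order $72$ with no element of order $36$ in which cyclic subgroups of orders $12$ and $18$ share a $C_6$ --- and there your recipe yields elements of orders $q'^{\,b}$ and $p'^{\,a}$, powers of \emph{different} primes, so no pair $(g,h)$ comes out; this case needs a separate argument and is the real content of the lemma. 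Moreover, even in the unmixed case the assertion ``$h_1^q=h_2^q=x$ after normalizing'' is false as stated: after stripping the other prime, $h_1^q$ has order $q^{a_1-1}$, which need not be $q$ nor match $h_2^q$, and the two orders $q^{a_1},q^{a_2}$ need not agree. The correct step is: the two $q$-parts are then distinct and (by non-adjacency of $h_1,h_2$) incomparable, so set $q^k:=|P_1\cap P_2|\geq q$, take the unique subgroups of order $q^{k+1}$ in $P_1$ and in $P_2$ (they exist, are distinct, and meet in $P_1\cap P_2$), and choose generators $g,h$ of these with $g^q=h^q$; then $y$ centralizes $\langle g,h\rangle$, giving conditions (1)--(3) with $m=k+1\geq 2$.
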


A natural example of the situation described in the previous lemma would be the group $Q_8\times C_3$, where a pair of generators for $Q_8$ can be taken to be $g$ and $h$ above. Of course there is, in general, no need for the group generated by an induced $4$-vertex cycle to resemble this structure. However, the following restriction always applies to $4$-cycles.
\begin{cor}\label{cor:4_cycles_high_order}
	Let $\mathfrak{C}$ be an induced $4$-cycle in the power graph of a finite group $G$. Then the order of each out-vertex is divisible by $p^2q$ for distinct primes $p$ and $q$.
\end{cor}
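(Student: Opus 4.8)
The plan is to argue under the standing hypothesis of Lemma~\ref{lem:char_prop_C4}, that $G$ has no element whose order is divisible by $pqr$ or $p^2q^2$ for distinct primes $p,q,r$; this hypothesis is genuinely needed, since an induced $4$-cycle in $\pow(C_{pqr})$ has out-vertices of squarefree order. So I would fix an induced $4$-cycle $\mathfrak{C}$ with out-vertices $z_1,z_2$ and in-vertices $x,y$. By definition of out-vertex, both $x$ and $y$ are powers of $z_1$ and of $z_2$, so $x,y\in\langle z_1\rangle\cap\langle z_2\rangle$; and since $x,y$ are non-adjacent while lying in the cyclic group $\langle z_1\rangle$, their orders are incomparable, so (as in the proof of Lemma~\ref{lem:assume_4_cycle_pow_red}) there are distinct primes $p,q$ for which $p$ divides $|x|$ more often than $|y|$ and $q$ divides $|y|$ more often than $|x|$. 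In particular $pq$ divides $|z_i|$ for each out-vertex $z_i$, so by the hypothesis $|z_i|=p^aq^b$ with $a,b\ge 1$ and $\min(a,b)=1$. It then suffices to exclude $a=b=1$, because once $\max(a,b)\ge 2$ the order $|z_i|$ is divisible by $s^2t$ for distinct primes $s,t$, which is the assertion.

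To rule out $a=b=1$, suppose some out-vertex, say $z_1$, has order exactly $pq$. Then $\langle z_1\rangle\cong C_{pq}$, and in $\pow(C_{pq})$ the only non-adjacent pairs consist of an element of order $p$ and one of order $q$; hence $|x|=p$, $|y|=q$, and $\langle x\rangle,\langle y\rangle$ are precisely the two proper nontrivial subgroups of $\langle z_1\rangle$. Now I would bring in the second out-vertex $z_2$, which is joined to both $x$ and $y$ in $\mathfrak{C}$. Since $z_2$ is joined to $x$ and $z_2\ne e$, either $x\in\langle z_2\rangle$, or $z_2\in\langle x\rangle$ and hence $|z_2|=p$; but then $z_2$ is not joined to the order-$q$ element $y$, a contradiction. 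So $x\in\langle z_2\rangle$, and symmetrically $y\in\langle z_2\rangle$. As $\langle z_2\rangle$ is cyclic and $|x|,|y|$ are coprime, $\langle x\rangle\langle y\rangle$ is a subgroup of $\langle z_2\rangle$ of order $pq$; but $\langle x\rangle\langle y\rangle\le\langle z_1\rangle$ and $|\langle z_1\rangle|=pq$ force $\langle x\rangle\langle y\rangle=\langle z_1\rangle$, so $z_1\in\langle z_2\rangle$ and $z_1$ is joined to $z_2$ — contradicting that $\mathfrak{C}$ is induced. Hence no out-vertex can have order $pq$, and the corollary follows.

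I expect the only real obstacle to be this last step, the exclusion of $|z|=pq$; everything else is bookkeeping with prime multiplicities in orders inside cyclic groups. The point is that one cannot confine the argument to $\langle z_1\rangle$ alone, because the four vertices of such a cycle genuinely can all sit inside a copy of $C_{pq}$ — they just stop being induced there, since the edge between the two out-vertices is present — so the non-adjacency of $z_1$ and $z_2$ must really be used to force $\langle z_1\rangle\le\langle z_2\rangle$. Two degenerate possibilities for an out-vertex are dismissed in passing: it cannot be the identity, which is joined to every vertex, and it cannot have prime-power order, since $\pow$ of a cyclic $p$-group is complete and this would join the two in-vertices.
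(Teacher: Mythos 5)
Your proof of the statement itself is correct, and it is essentially the intended derivation: the corollary (which the paper states without a separate proof) is meant to be read under the standing hypothesis of Lemma~\ref{lem:char_prop_C4}, that no element order is divisible by $pqr$ or $p^2q^2$, and with that hypothesis your chain of steps --- incomparability of the in-vertex orders yields distinct primes $p,q$ with $pq$ dividing each out-vertex order, hence each out-vertex order is $p^aq^b$ with $\min(a,b)=1$, and the case $a=b=1$ collapses because then $\langle x\rangle\langle y\rangle=\langle z_1\rangle\leq\langle z_2\rangle$ forces the chord $\{z_1,z_2\}$ --- is exactly the right bookkeeping. You are also right that the hypothesis cannot simply be dropped, despite the paper's phrase ``always applies''. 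The one factual slip is your witness: $\pow(C_{pqr})$ contains no induced $4$-cycle at all, so your sentence about it is only vacuously true and does not show the hypothesis is needed. Indeed, inside a cyclic group adjacency is equivalent to comparability of orders, so the two out-vertices of an induced $4$-cycle would need incomparable orders, each divisible by $\mathrm{lcm}(|x|,|y|)$, which involves two distinct primes; but the divisors of $pqr$ divisible by, say, $pq$ are only $pq$ and $pqr$, which are comparable. A correct witness is, e.g., $C_{60}$ (or more generally $C_{p^2qr}$ or $C_{pqrs}$): taking $|x|=2$, $|y|=3$, $|z_1|=12$, $|z_2|=30$, these four elements induce a $4$-cycle whose out-vertex $z_2$ has squarefree order $30$, so the unconditional statement fails there. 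With that example substituted for $C_{pqr}$, your proposal is complete and matches how the corollary is actually used later in the paper, where the ambient groups do satisfy the order restrictions of Lemma~\ref{lem:char_prop_C4}.
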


In the case of $4$-vertex cycles we also note the following structural implication.
\begin{cor}\label{cor:simple-chordal_max_subgr}
	If $\pow(G)$ contains an induced $4$-vertex cycle, say $\mathfrak{C}=(g_1,g_2,g_3,g_4)$, then $U:=\langle g_1,\dots,g_4\rangle$ admits a proper normal cyclic subgroup. In particular, if $G$ is simple then there is a maximal subgroup of $G$ that contains $\mathfrak{C}$.
\end{cor}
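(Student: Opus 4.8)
The plan is to read a central cyclic subgroup of $U$ straight off the alternating structure of the $4$-cycle, and then use simplicity to rule out $U=G$. First I would record, using the in-/out-vertex terminology introduced after Lemma~\ref{lem:paths_alternating}, that the vertices of $\mathfrak{C}$ alternate between in-vertices and out-vertices around the cycle: two consecutive vertices necessarily have opposite type, and since $\mathfrak{C}$ has even length the assignment is consistent. Relabelling cyclically if necessary, I may therefore assume that $g_1$ and $g_3$ are the out-vertices and $g_2$ and $g_4$ the in-vertices. Then $g_2\in\langle g_1\rangle$ and $g_2\in\langle g_3\rangle$ (and likewise for $g_4$), and consequently $U=\langle g_1,g_2,g_3,g_4\rangle=\langle g_1,g_3\rangle$.

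Next I would take $N:=\langle g_2\rangle$ as the claimed subgroup. It is cyclic by construction and nontrivial, since the identity element is adjacent to every vertex of $\pow(G)$ and hence cannot appear in an induced $4$-cycle, so $g_2\neq e$. Because $g_2$ lies in the abelian subgroups $\langle g_1\rangle$ and $\langle g_3\rangle$, it commutes with both $g_1$ and $g_3$, hence with all of $U=\langle g_1,g_3\rangle$; thus $N\le Z(U)$, and in particular $N\trianglelefteq U$. Finally, $N\neq U$: if $\langle g_2\rangle=U$, then the chain $\langle g_2\rangle\le\langle g_1\rangle\le U$ forces $\langle g_1\rangle=U$, and similarly $\langle g_3\rangle=U$, so $g_1$ and $g_3$ would both generate the cyclic group $U$ and therefore be powers of one another --- contradicting that $\{g_1,g_3\}$ is a non-edge of $\mathfrak{C}$. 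This proves the first assertion.

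For the ``in particular'' clause, assume $G$ is simple. A cyclic group of prime order has complete power graph, so it contains no induced $4$-cycle; hence $G$ is non-abelian simple, and in particular has no proper nontrivial normal subgroup. If we had $U=G$, the subgroup $N$ just constructed would contradict this, so $U$ is a proper subgroup of $G$. Since $G$ is finite, $U$ --- and with it the whole $4$-cycle $\mathfrak{C}$ --- is contained in some maximal subgroup of $G$, as claimed.

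I do not anticipate a real obstacle here: the entire argument rests on the single observation that in an induced $4$-cycle the two in-vertices lie in the cyclic group generated by each of the two out-vertices. The only points needing a little care are fixing the in-/out-labelling of $\mathfrak{C}$ (equivalently, checking that no two consecutive vertices of $\mathfrak{C}$ generate the same cyclic subgroup, so that every arc of $\mathrm{D\!\pow(G)}[\mathfrak{C}]$ is strictly directed) and verifying that the exhibited normal subgroup is proper.
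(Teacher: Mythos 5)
Your proof is correct and follows the route the paper evidently intends (the corollary is stated there without proof): the alternating in-/out-structure of the induced $4$-cycle places the two in-vertices inside $\langle g_1\rangle\cap\langle g_3\rangle$, so $U=\langle g_1,g_3\rangle$ has the nontrivial central, hence normal, proper cyclic subgroup $\langle g_2\rangle$, and simplicity then forces $U<G$, whence $\mathfrak{C}$ lies in a maximal subgroup. The point you flag about bidirectional arcs is worth a word but harmless: two adjacent vertices of an induced $4$-cycle cannot generate the same cyclic subgroup, since they would then have the same closed neighbourhood in $\pow(G)$, contradicting that each is adjacent to a vertex the other is not.
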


Lastly we note that there is a complete list of nilpotent groups with induced $4$-vertex cycles in their power graph given in~\cite{doostabadi}.

\subsection{Larger cycles \& chordality}\label{sec_direct_prod}
\begin{dfn}
	A graph is \emph{chordal} if it does not admit any induced cycles on at least four vertices. 
	We say that a group $G$ is \emph{power-chordal} if its power graph is chordal.
\end{dfn}	
A result of Dilworth~\cite{dilworth_1950} shows that power graphs are perfect so in particular power graphs do not admit cycles of odd length. 
We collect general restrictions on element orders in power-chordal groups. They were also observed in~\cite{manna2021forbidden}.
\begin{lem}
	If $G$ contains elements of order $p^2q^2$ or $pqr$ for pairwise distinct primes $p,q,r$, then $G$ is not power-chordal.
	If $G$ is an $\mathrm{EPPO}$-group, i.e., all elements of $G$ have prime power order, then $G$ is power-chordal.
\end{lem}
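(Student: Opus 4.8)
The plan is to treat the two implications separately, in both cases reducing everything to the elementary fact that in a cyclic group two elements are adjacent in the power graph if and only if the order of one divides the order of the other (a cyclic group has a unique subgroup of each order).

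For the first implication I would exhibit an explicit induced cycle of length at least four inside $\pow(G)$, which already precludes chordality. Suppose $g\in G$ has order $pqr$ with $p,q,r$ distinct primes. Working inside $\langle g\rangle$, take the elements $u,v,w$ of prime orders $p,q,r$ together with elements $a,b,c$ of orders $pq,qr,rp$. By the divisibility criterion one checks immediately that $a$ is adjacent exactly to $u$ and $v$, that $b$ is adjacent exactly to $v$ and $w$, that $c$ is adjacent exactly to $w$ and $u$, and that $\{u,v,w\}$ and $\{a,b,c\}$ are each independent sets; hence $(u,a,v,b,w,c)$ is an induced $6$-cycle. If instead $g$ has order $p^2q^2$, the same recipe with $a,b$ of orders $p^2q,pq^2$ as the two ``high'' vertices and $u,v$ of orders $p,q$ as the two ``low'' vertices gives an induced $4$-cycle $(u,a,v,b)$. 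Since $\pow(\langle g\rangle)$ is precisely the subgraph of $\pow(G)$ induced on $\langle g\rangle$, these cycles are induced in $\pow(G)$, so $G$ is not power-chordal.

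For the second implication I would argue by contradiction. Suppose $\pow(G)$ contains an induced cycle $\mathfrak C$ of length at least four. By Lemma~\ref{lem:paths_alternating} and the alternating in/out structure it entails, $\mathfrak C$ has at least one out-vertex $s$, and both cycle-neighbours $t_1,t_2$ of $s$ lie in $\langle s\rangle$. Since $\mathfrak C$ is induced of length $\geq 4$, the vertices $t_1$ and $t_2$ are at distance two along $\mathfrak C$ and hence non-adjacent in $\pow(G)$. But in an $\mathrm{EPPO}$-group $|s|$ is a prime power, so $\langle s\rangle$ is a cyclic $p$-group whose subgroups form a chain; thus $\langle t_1\rangle$ and $\langle t_2\rangle$ are comparable and $t_1,t_2$ are adjacent in $\pow(G)$, a contradiction. (Equivalently, one could first apply Lemma~\ref{lem:assume_cycle_pow_red} to pass to a power-reduced cycle and then invoke Lemma~\ref{lem:primes_alternate}: the two prime-order in-vertices adjacent to $s$ would have distinct prime orders both dividing $|s|$, again contradicting the $\mathrm{EPPO}$ hypothesis.)

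I do not anticipate a serious obstacle; the routine points are (i) verifying that the six (resp. four) exhibited elements induce exactly the stated cycle, which is immediate from the divisibility criterion in cyclic groups, and (ii) checking that the degenerate $4$-cycle case is covered by the ``distance two'' argument, where the neighbours $t_1,t_2$ of $s$ simply form the other pair of opposite vertices of the $4$-cycle.
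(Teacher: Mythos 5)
Your proof is correct. The paper states this lemma without proof (attributing the observation to Manna, Cameron and Mehatari), and your argument is exactly the standard one that is intended: an induced $4$-cycle (orders $p,\,p^2q,\,q,\,pq^2$) or $6$-cycle (orders $p,\,pq,\,q,\,qr,\,r,\,rp$) inside the cyclic subgroup of order $p^2q^2$ or $pqr$, and, for the $\mathrm{EPPO}$ direction, the alternation of arc directions producing an out-vertex whose two non-adjacent cycle-neighbours lie in a cyclic group of prime-power order, whose subgroups form a chain --- a contradiction. Your parenthetical alternative via Lemmas~\ref{lem:assume_cycle_pow_red} and~\ref{lem:primes_alternate} is also valid and consistent with the machinery the paper develops.
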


For nilpotent groups there is a complete description of power-chordal groups available.
\begin{thm}[{\cite[Theorem 17, Theorem 19]{manna2021forbidden}}]\label{thm:pocho_nilpot}
	If $G$ is nilpotent, then the power graph of $G$ is chordal if and only if $G$ is a $p$-group or $G=C_{q^m}\times P$, where $P$ is a group of exponent $p$ and $p$ and $q$ are distinct primes.
\end{thm}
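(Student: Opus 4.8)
The plan is to push everything down to the Sylow subgroups of $G$ — a nilpotent group is their internal direct product — and then feed this into the cycle‑reduction machinery of Section~3 together with Lemma~\ref{lem:char_prop_C4}. Write $G=S_{p_1}\times\cdots\times S_{p_k}$. I use freely that $\pow(G)$ is perfect (no induced odd cycle), that $\mathrm{EPPO}$‑groups are power‑chordal, and that an element of order $p^2q^2$ or $pqr$ obstructs chordality — all recorded just before the statement.

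\textbf{Sufficiency.} A $p$‑group is $\mathrm{EPPO}$, hence power‑chordal. So let $G=C_{q^m}\times P$ with $p\neq q$, $\exp(P)=p$, and set $N:=C_{q^m}\times 1$, the cyclic Sylow $q$‑subgroup. Suppose $\pow(G)$ contains an induced cycle on at least four vertices; by Lemma~\ref{lem:assume_cycle_pow_red} we may take it power‑reduced, say $\mathfrak{C}$. By Lemma~\ref{lem:primes_alternate} its consecutive in‑vertices carry distinct prime orders, so — only $p$ and $q$ being available — the in‑vertices alternate between order $p$ and order $q$. Since $N$ is cyclic it has a unique subgroup of order $q$, so any two in‑vertices of order $q$ are powers of one another, hence equal; thus $\mathfrak{C}$ has a single in‑vertex of order $q$, only two in‑vertices, and length $4$, say $\mathfrak{C}=(x,z,y,z')$ with $|x|=p$, $|y|=q$ and $z,z'$ the out‑vertices. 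Each of $z,z'$ powers onto both $x$ and $y$, so, as $\exp(P)=p$, we get $|z|=pq^{a}$ and $|z'|=pq^{b}$ with $a,b\ge1$. In the cyclic group $\langle z\rangle$ the Sylow $p$‑part is cyclic of order $p$ and contains $x$, hence equals $\langle x\rangle$, while the Sylow $q$‑part is $\langle z\rangle\cap N$; likewise for $z'$. Because $N$ is cyclic, $\langle z\rangle\cap N$ and $\langle z'\rangle\cap N$ are comparable; assuming $\langle z\rangle\cap N\le\langle z'\rangle\cap N$ gives $\langle z\rangle=(\langle z\rangle\cap N)\times\langle x\rangle\le(\langle z'\rangle\cap N)\times\langle x\rangle=\langle z'\rangle$, so $z\sim z'$, contradicting that $\mathfrak{C}$ is induced. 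Hence $\pow(G)$ is chordal.

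\textbf{Necessity.} Let $G$ be nilpotent and power‑chordal, $G=S_{p_1}\times\cdots\times S_{p_k}$. Nontrivial elements in three distinct Sylow factors multiply to an element of order $p_ip_jp_\ell$, which is forbidden, so $k\le2$; and $k=1$ gives the $p$‑group case. Assume $k=2$, say $G=P\times Q$ with Sylow primes $p\neq q$. Since no element has order $p^2q^2$, we cannot have $\exp(P)\ge p^2$ and $\exp(Q)\ge q^2$ simultaneously, so after renaming $\exp(Q)=q$. One now aims to show that $G$ has the asserted shape, i.e.\ that either $P$ is cyclic (so $G\cong C_{p^m}\times Q$) or $\exp(P)=p$ together with $Q$ cyclic (so $G\cong C_{q^n}\times P$), and that every other configuration produces an induced cycle. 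The key tool is Lemma~\ref{lem:char_prop_C4}: because $Q\le C_G(P)$, the prime $q$ divides $|C_G(\langle g,h\rangle)|$ for \emph{every} $g,h\in P$, so it suffices to locate $g,h\in P$ with $|g|=|h|=p^\ell$ ($\ell\ge2$), $g^p=h^p$ and $\langle g\rangle\ne\langle h\rangle$, in order to get an induced $4$‑cycle. When $P$ is non‑cyclic with $\exp(P)\ge p^2$, such a pair is found by taking $g$ of maximal order, passing to $\langle g\rangle\trianglelefteq M\le P$ with $[M:\langle g\rangle]=p$, and invoking the classification of $p$‑groups with a cyclic subgroup of index $p$ to read off $h$; the remaining exponent‑$p$ configurations (and the case of non‑cyclic $Q$) are handled by exhibiting longer power‑reduced induced cycles built from several distinct cyclic $p$‑subgroups.

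\textbf{The main obstacle.} Sufficiency is routine once the reduction lemmas are in hand. The real work is the structural step in necessity — that any nilpotent group not of the listed form contains an induced cycle — which requires a careful, exhaustive case distinction over the families of non‑cyclic $p$‑groups. Lemma~\ref{lem:char_prop_C4} disposes of every $P$ possessing two distinct cyclic subgroups of a common order $\ge p^2$ sharing their $p$‑th power, but the borderline families (groups of exponent $p$, and the maximal‑class $2$‑groups) have to be treated individually, and extracting a clean, complete statement at this point is the delicate part of the argument.
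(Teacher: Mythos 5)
First, a point of reference: the paper does not prove this statement at all — it is imported verbatim from Manna–Cameron–Mehatari \cite{manna2021forbidden} — so your proposal can only be judged on its own merits, not against a proof in this paper. Your sufficiency direction is essentially sound: reducing to a power-reduced cycle, noting that the unique subgroup of order $q$ forbids two in-vertices of order $q$, and comparing the two out-vertices inside the cyclic Sylow $q$-subgroup does work (one small slip: two distinct in-vertices of order $q$ are not ``equal'', they are adjacent in $\pow(G)$, which contradicts inducedness; the conclusion is unaffected).

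The necessity direction, however, has a genuine gap, and it sits exactly at the point you defer. Your pivotal claim — that every non-cyclic $p$-group $P$ with $\exp(P)\ge p^2$ contains $g,h$ with $|g|=|h|=p^\ell\ge p^2$, $g^p=h^p$ and $\langle g\rangle\ne\langle h\rangle$, so that Lemma~\ref{lem:char_prop_C4} produces a $4$-cycle in $P\times Q$ — is false: in a dihedral $2$-group $D_{2^n}$ (already $D_8$) there is a \emph{unique} cyclic subgroup of each order $\ge 4$ and every element outside it is an involution, so no such pair exists; the classification of $p$-groups with a cyclic subgroup of index $p$ that you invoke includes precisely these groups and cannot ``read off'' an $h$. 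Worse, this is not a repairable case distinction: $\pow(D_8\times C_q)$ is actually chordal. Every edge of the power graph lies in a maximal cyclic subgroup, and these are one copy of $C_{4q}$ and four copies of $C_{2q}$ which pairwise intersect in the unique $C_q$; hence $\pow(D_8\times C_q)$ is a clique-sum of power graphs of cyclic groups over the clique $\{1\}\cup C_q$ and contains no induced cycle of length $\ge 4$. This is also what the paper's own Theorem~\ref{lem:direct_prod}, case 2(d), certifies for $C_q\times D_8$ (all cyclic $2$-subgroups of $D_8$ intersect trivially or are nested). Yet $D_8\times C_q$ is neither a $p$-group nor of the form $C_{q^m}\times P$ with $\exp(P)=p$, so the ``only if'' direction as reproduced in the statement cannot be established by your route — the maximal-class $2$-groups you set aside are exactly where the argument, and indeed the literal statement, breaks down, and any complete treatment has to handle them by a different criterion than Lemma~\ref{lem:char_prop_C4} (or work from the precise hypotheses of the cited source rather than the paraphrase given here).
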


Beyond the nilpotent case not much is currently known. One problem in deciding if a power graph is chordal or not lies in the non-locality of the property: it is not clear how to decide if a given group element $g\in G$ lies in a, potentially long, cycle by only looking at a small portion of group elements. In the following we provide sufficient conditions for chordality and non-chordality of power graphs based on properties of centralizers of (pairs of) elements in $G$. They provide effective tools for our purpose of classifying power-chordal groups later on.

\begin{lem}\label{lem:sufficient_condition_chordal}
	If $G$ contains elements of order $p^2q^2$ for distinct primes $p,q$, then $\pow(G)$ is non-chordal. Otherwise, 
	If for each $x$ in $G$ of prime order $p$ the centralizer $C_G(x)$ is either a $p$-group or of the form $C_{q^n} \rtimes P$ where $q$ is a prime distinct from $p$ and $P$ is a $p$-group, then $\pow(G)$ is chordal. 
\end{lem}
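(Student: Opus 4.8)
The plan is to prove the contrapositive of the second assertion: assuming $G$ has no element of order $p^2q^2$ (the first assertion being immediate from the earlier lemma on element orders) and that $\pow(G)$ contains an induced cycle $\mathfrak{C}$ on at least four vertices, I will exhibit a prime-order element $x$ whose centralizer violates the stated structural condition. By Lemma~\ref{lem:assume_cycle_pow_red} I may assume $\mathfrak{C}$ is power-reduced, so all its in-vertices have prime order, and by Lemma~\ref{lem:primes_alternate} consecutive in-vertices have distinct prime orders. Via Lemma~\ref{lem:connection_pow_com}, the in-vertices $\mathfrak{C}^-$ form an induced cycle in $\mathrm{Com}(G)$ in which consecutive vertices have distinct prime orders and are joined by commuting. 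The first key step is therefore: reduce to analysing a commuting cycle $x_1 - x_2 - \cdots - x_{2k} - x_1$ of prime-order elements, $k\geq 2$, with consecutive ones commuting but non-adjacent ones not (the non-adjacency in $\pow(G)$ forcing, for non-consecutive $x_i,x_j$, either non-commutativity or distinct primes with no power relation; here since they have prime order, non-adjacency in $\pow(G)$ just means they don't lie together in a cyclic group).

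The second key step is to pick the right vertex of this commuting cycle to contradict the hypothesis. Take any in-vertex $x := x_1$ of prime order $p$; its two neighbours $x_2, x_{2k}$ lie in $C_G(x)$. I want to show $C_G(x)$ is neither a $p$-group nor of the form $C_{q^n}\rtimes P$ with $P$ a $p$-group. If $C_G(x)$ were a $p$-group, then $x_2$ (a neighbour, prime order) would have order $p$, but also $\langle x, x_2\rangle$ is a $p$-group containing two distinct subgroups of order $p$, hence of order at least $p^2$; yet $x$ and $x_2$ commute and — tracing back through Lemma~\ref{lem:connection_pow_com} — they were joined in $\mathfrak{C}$ via the out-vertex $x x_2$, which would then be a $p$-element; that is fine, but the point is that $x_2$ and $x_{2k}$ are the two in-vertex-neighbours and they need NOT commute or need distinct behaviour. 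The cleaner route: since consecutive in-vertices have distinct primes (Lemma~\ref{lem:primes_alternate}), $x_2$ has prime order $q\neq p$ and $x_{2k}$ has prime order $r\neq p$. So $C_G(x)$ contains $x$ of order $p$ and $x_2$ of order $q\neq p$, ruling out that $C_G(x)$ is a $p$-group. To rule out $C_{q'^n}\rtimes P$: suppose $C_G(x)=C_{q'^n}\rtimes P$ with $q'\neq p$ a prime and $P$ a $p$-group. Then $x\in P$ (the $p$-part). The neighbours $x_2,x_{2k}\in C_G(x)$ have prime orders $q,r\neq p$, so they must lie in the unique Sylow $q'$-subgroup, forcing $q=r=q'$ and both $x_2,x_{2k}\in C_{q'^n}$ — hence $x_2$ and $x_{2k}$ lie in a common cyclic group, so they are adjacent in $\pow(G)$. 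But in an induced cycle on at least four vertices, $x_2$ and $x_{2k}$ are non-adjacent (they are distinct and not consecutive, as $2k\geq 4$). Contradiction.

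The third step handles the degenerate possibility $k=2$ (a $4$-cycle) separately only if the argument above secretly used $2k\geq 6$: in fact it did not, since even for a $4$-cycle $x_1-x_2-x_3-x_4$ the vertices $x_2$ and $x_4$ are non-adjacent. One subtlety to check carefully is whether $x_2$ and $x_{2k}$ could coincide — impossible, since a cycle has distinct vertices and $2k\geq 4$. Another subtlety: I must make sure the out-vertex used to realise adjacency does not secretly create a shortcut; but adjacency of $x_2,x_{2k}$ in $\pow(G)$ is genuinely forbidden by $\mathfrak{C}$ being induced, independent of out-vertices.

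The main obstacle I anticipate is bookkeeping the translation between the cycle $\mathfrak{C}$ in $\pow(G)$ and the commuting cycle on $\mathfrak{C}^-$, and making sure I invoke the right reduction lemmas in the right order so that "all in-vertices have prime order" and "consecutive in-vertices have distinct primes" are both available simultaneously — which they are, by Lemmas~\ref{lem:assume_cycle_pow_red} and~\ref{lem:primes_alternate}. Once those are in hand the centralizer analysis is short. I would also double-check the hypothesis that no element has order $p^2q^2$ is not actually needed for the chordality direction — it appears it is only needed to dispatch the first clause — and state the argument so that this is transparent.
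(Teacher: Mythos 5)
Your main argument (pass to a power-reduced cycle, look at an in-vertex $x$ of prime order $p$ and its two neighbouring in-vertices, and note that in $C_{q^n}\rtimes P$ all elements of prime order $\neq p$ lie in the unique cyclic normal Sylow subgroup and are therefore comparable) is sound and is essentially the paper's argument for cycles of length at least $6$. The genuine gap is the induced $4$-cycle case, and your dismissal of it rests on a mis-count: a power-reduced induced cycle of length $2m$ in $\pow(G)$ has exactly $m$ in-vertices, so your ``commuting cycle'' has $m$ vertices, not $2k\geq 4$. For a power-graph $4$-cycle there are only \emph{two} in-vertices, the two ``neighbours'' $x_2$ and $x_{2k}$ of $x_1$ coincide, and your argument yields no contradiction: the two in-vertices have distinct prime orders $p$ and $q$ and sit perfectly comfortably inside a centralizer of the form $C_{q^n}\rtimes P$. (Cycles of length $6$, with three in-vertices, are fine, since there the two neighbours of $x_1$ are distinct, non-adjacent in $\pow(G)$, and even of distinct prime orders.) The paper closes exactly this case by invoking Lemma~\ref{lem:char_prop_C4}: under the no-$p^2q^2$, no-$pqr$ assumption, any induced $4$-cycle produces two elements $g,h$ of the \emph{same} order $q^m$ with $m>1$, $g^q=h^q$, $\langle g\rangle\neq\langle h\rangle$, centralized by an element of prime order $p\neq q$; the centralizer of that $p$-element then contains two non-comparable $q$-elements, contradicting both allowed centralizer shapes. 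Some argument of this kind, using the out-vertices rather than the in-vertices, is unavoidable.

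Your closing speculation that the no-$p^2q^2$ hypothesis might be dispensable for the chordality direction is also false, and the counterexample shows why the $4$-cycle case cannot be waved away: in $G=C_{p^2q^2}=C_{q^2}\times C_{p^2}$ every element of prime order has centralizer $G$ itself, which is of the permitted form $C_{q^n}\rtimes P$ (with trivial action), yet $\pow(G)$ contains an induced $4$-cycle with in-vertices of orders $p$ and $q$ and out-vertices of orders $p^2q$ and $pq^2$. So the hypothesis excluding elements of order $p^2q^2$ enters the chordality direction in an essential way, precisely through the treatment of $4$-cycles via Lemma~\ref{lem:char_prop_C4}.
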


\begin{proof}
	Assume that $\pow(G)$ is not chordal, so there exists an induced $t$-cycle $\mathfrak{C}=(g_1,\dots,g_t,g_{t+1}=g_1)$ in $\pow(G)$ with $t>3$. By assumption we also have that no element of $G$ has an order that is divisible
	by $p^2q^2$ for distinct primes $p$ and $q$. Moreover, if $G$ contains an element whose order is divisible by three distinct primes, then the centralizer of this element is neither a $p$-group or of the form $C_{q^n} \rtimes P$, so assume otherwise.
	
	By Lemma~\ref{lem:assume_cycle_pow_red}, we may assume without loss of generality that $\mathfrak{C}$ is power-reduced.
	Let $g_i$ be an in-vertex of $\mathfrak{C}$, so $g_i\in\langle g_{i\pm 1}\rangle$ and denote the (prime) order of $g_i$ by $p$. Then $g_{i\pm 1}$ is not a $p$-element. Let $h_{i\pm 1}\in\langle g_{i\pm 1}\rangle$ be a $p'$-element. Then $C_G(g_i)$ contains both $h_{i-1}$ and $h_{i+1}$, showing the claim if $h_{i-1}$ and $h_{i+1}$ are not connected via an edge in $\pow(G)$ and not equal. On the other hand, if $h_{i-1}$ and $h_{i+1}$ are connected via an edge in $\pow(G)$ or equal, then the set $\{h_{i+1},g_{i+1},g_{i-1},g_i\}$ induces a cycle of length $4$ in $\pow(G)$.
	In this case the claim follows from Lemma~\ref{lem:char_prop_C4}.
\end{proof}

To give a first example, we apply the previous lemma to generalized dihedral groups.

\begin{lem}
	A generalized dihedral group $A \rtimes C_2$, where $A$ is abelian and $C_2$ acts on $A$ by inversion, is power-chordal if and only if $A$ is power-chordal. 
\end{lem}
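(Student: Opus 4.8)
The plan is to prove the two implications separately, using the characterization of induced $4$-cycles (and the general reductions) developed above. Write $G = A \rtimes C_2$ with $A$ abelian and the nontrivial element $\tau$ of $C_2$ acting on $A$ by $a \mapsto a^{-1}$. The key structural observations I would use are: (i) every element of $G$ outside $A$ is an involution (if $g = a\tau$ then $g^2 = a \tau a \tau = a a^{-1} = 1$), so $G \setminus A$ consists entirely of elements of order $1$ or $2$; (ii) consequently, in the directed power graph, an element $a\tau \notin A$ of order $2$ can only be a power of elements of even order, and it is joined in $\pow(G)$ only to the identity, to itself, and to those elements $x$ with $x^{|x|/2} = a\tau$; (iii) the elements of order dividing $2$ form a clique in $\pow(G)$ together with $1$, but more importantly any involution is a ``pendant-like'' low vertex. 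The upshot I want is that elements outside $A$ are essentially useless for building long induced cycles.

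For the easy direction, suppose $A$ is power-chordal; I claim $G$ is too. Suppose not, and take an induced cycle $\mathfrak{C}$ of length $\geq 4$ in $\pow(G)$; by Lemma~\ref{lem:assume_cycle_pow_red} we may take $\mathfrak{C}$ power-reduced. By Corollary~\ref{cor:4_cycles_high_order} every out-vertex of a $4$-cycle has order divisible by $p^2 q$, and more generally every out-vertex of a power-reduced cycle has order divisible by two distinct primes (its in-neighbours have distinct prime orders and lie in its cyclic span), hence has order $> 2$ and so lies in $A$. The in-vertices have prime order; an in-vertex of order $2$ could a priori lie outside $A$, but then it is joined only to the identity and to elements $x$ with $x^{|x|/2}$ equal to that specific involution — and since $G/A \cong C_2$, for $x \in A$ we have $x^{|x|/2} \in A$, so such an $x$ must lie outside $A$ and be of order $4$; but elements outside $A$ have order $\leq 2$, contradiction. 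Hence both in-vertices adjacent to this involution would have to... I would instead argue more cleanly: if an in-vertex $g_i$ of $\mathfrak{C}$ lies outside $A$, then $|g_i| = 2$ and its two neighbours $g_{i\pm 1}$ (out-vertices, hence in $A$) satisfy $g_{i\pm1}^{|g_{i\pm1}|/2} = g_i \notin A$, impossible since powers of elements of $A$ stay in $A$. So all vertices of $\mathfrak{C}$ lie in $A$, and since $\pow(A)$ is an induced subgraph of $\pow(G)$, this contradicts chordality of $A$.

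For the converse, suppose $A$ is not power-chordal; I must produce an induced cycle of length $\geq 4$ in $\pow(G)$. The naive hope — that an induced cycle of $\pow(A)$ is still induced in $\pow(G)$ — is correct, because $\pow(A) = \pow(G)[A]$ is literally an induced subgraph: adjacency of two elements of $A$ in $\pow(G)$ is witnessed by a power relation, and raising to powers cannot leave $A$, so no new edges are created. Hence any induced cycle of length $\geq 4$ in $\pow(A)$ is also induced in $\pow(G)$, and $G$ is not power-chordal. This direction is essentially immediate once the ``$A$ is $\pow$-closed under taking powers'' observation is in place.

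The main obstacle is the bookkeeping in the forward direction: one must rule out that the involutions outside $A$ (of which there are $|A|$ many) allow an induced cycle through $G \setminus A$ even when $A$ itself is chordal. The clean resolution is the order/normal-subgroup argument above: in a power-reduced cycle, every out-vertex has order divisible by two primes hence lies in $A$, and every in-vertex is then forced into $A$ as well because its two out-neighbours are in $A$ and an element of $A$ of order $2^k$ has all its $2$-power images again in $A$ — so an in-vertex equal to some $g^{|g|/2}$ with $g \in A$ is itself in $A$. I would double-check the one genuinely delicate point, namely whether $A$ can contain elements of order $> 2$ at all while $G$ stays small (it can, e.g.\ $A = C_3$), and whether the reduction in Lemma~\ref{lem:assume_cycle_pow_red} interacts correctly with the constraint that replacement of a vertex by a prime-order power stays inside $A$ — it does, since $A$ is $\langle\,\cdot\,\rangle$-closed. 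No Catalan-type input is needed here.
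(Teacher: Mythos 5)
Your proof is correct, and the non-trivial direction is argued along genuinely different lines than the paper. The paper first invokes the classification of nilpotent (hence abelian) power-chordal groups (Theorem~\ref{thm:pocho_nilpot}) to write $A\cong C_{q^n}\times C_p^m$, and then verifies the centralizer hypothesis of Lemma~\ref{lem:sufficient_condition_chordal} for all prime-order elements of $G=A\rtimes C_2$. You instead avoid both of these tools: using only the power-reduction machinery (Lemmas~\ref{lem:assume_cycle_pow_red} and~\ref{lem:primes_alternate}) you show that in a power-reduced induced cycle of length at least $4$ every out-vertex has order divisible by two distinct primes, hence order greater than $2$, hence lies in $A$ (all elements of $G\setminus A$ being involutions), and every in-vertex, being a power of an out-vertex, lies in $A$ as well; since $\pow(A)=\pow(G)[A]$, non-chordality of $G$ forces non-chordality of $A$. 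This containment argument is more self-contained and does not need the structure of $A$ at all, whereas the paper's route is a quick application of its general centralizer criterion (and, incidentally, your argument sidesteps the paper's slightly careless claims about $C_G(x)$ for involutions $x\in A$). The easy direction is the same in both: $\pow(A)$ is an induced subgraph of $\pow(G)$, so power-chordality passes to subgroups. The only blemish is the abandoned detour in your middle paragraph, which you correctly replace by the clean "powers of elements of $A$ stay in $A$" argument; the final write-up should simply omit the detour.
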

\begin{proof} Let $G = A \rtimes C_2$.
	If $A$ is an abelian power-chordal group, then it is isomorphic to $C_{q^n} \times C_p^m$ where $n,m \in \mathbb{N}_0$ and $p$ and $q$ are distinct primes by Theorem~\ref{thm:pocho_nilpot}. 
	If $x\in A$ then $C_G(x)=C_A(x)=A$ and $A$ satisfies the centralizer condition in  Lemma~\ref{lem:sufficient_condition_chordal}. The other elements in $G$ have order $2$ and generate their own centralizers as a subgroup of $G$, so they also satisfy the centralizer condition.
\end{proof}

Moreover, we want to note that generalized quaternion groups fail to fulfill the third condition of Lemma~\ref{lem:char_prop_C4} and are indeed power-chordal whenever the maximal normal cyclic subgroup is.
\begin{lem}
	The generalized quaternion group of order $4n$ is given by
	\[
		Q_{4n}=\langle x,y\mid x^{2n}=y^4=1, x^n=y^2, x^y=x^{-1}\rangle.
	\]It is power-chordal if and only if $\langle x\rangle$ is, i.e., if and only if we have
	$2n=2^ap^b$ for some odd prime $p$ and $a,b\in\mathbb{N}_0$ with $a\leq 1$ or $b\leq 1$.
\end{lem}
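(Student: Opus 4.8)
The plan is to reduce the statement about $Q_{4n}$ to the classification of power-chordal cyclic groups via the centralizer criterion of Lemma~\ref{lem:sufficient_condition_chordal}, handling a small exceptional case by hand. First I would record the basic structure of $Q_{4n}$: the subgroup $\langle x\rangle\cong C_{2n}$ is cyclic of index $2$ and normal, every element outside $\langle x\rangle$ has the form $x^iy$ and satisfies $(x^iy)^2=y^2=x^n$, so each such element has order $4$ and squares into the unique central involution $x^n$. In particular $Q_{4n}$ has a unique element of order $2$, and every cyclic subgroup of $Q_{4n}$ either lies in $\langle x\rangle$ or is one of the order-$4$ subgroups $\langle x^iy\rangle$, all of which contain $x^n$.

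Next I would dispatch the easy direction together with the arithmetic reformulation. If $\langle x\rangle\cong C_{2n}$ is power-chordal then by Theorem~\ref{thm:pocho_nilpot} (applied to the cyclic, hence nilpotent, group $C_{2n}$) we get $2n=2^a p^b$ with $p$ an odd prime and $\min(a,b)\le 1$ — this is exactly the stated arithmetic condition, using that a cyclic group $C_m$ is a $p$-group or of the form $C_{q^m}\times C_p^\ell$ precisely when $m$ has at most two prime divisors with at most one of them appearing to a power $>1$. Conversely, if $\langle x\rangle$ is not power-chordal then it already contains an induced $4$-cycle in its power graph (it has an element of order $p^2q^2$ or $pqr$), and since $\pow(\langle x\rangle)$ is an induced subgraph of $\pow(Q_{4n})$, the latter is non-chordal as well. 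So the only real content is: if $2n=2^ap^b$ with $\min(a,b)\le 1$, then $Q_{4n}$ is power-chordal.

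For that, I would verify the centralizer hypothesis of Lemma~\ref{lem:sufficient_condition_chordal}. No element of $Q_{4n}$ has order divisible by $p^2q^2$: element orders are divisors of $2n=2^ap^b$ or equal to $4$, and in the case $4\mid 2n$ one checks $a\le 1$ forces $a+2\le 3$, while $b$ could be large but $p$ is the only odd prime involved, so $p^2q^2\nmid |g|$ always. Then I check the centralizer of each element of prime order. The only element of order $2$ is the central $x^n$, whose centralizer is all of $Q_{4n}=\langle x\rangle\rtimes C_2$; writing $\langle x\rangle\cong C_{2^a}\times C_{p^b}$ and noting $\min(a,b)\le 1$, one sees $Q_{4n}$ has the shape $C_{q^m}\rtimes P$ with $P$ a $2$-group (when $b\le1$, take $q=p$, $m=b$, $P=$ the $2$-Sylow part together with the outer $C_2$; when $a\le1$, $Q_{4n}$ is a $\{2,p\}$-group with cyclic $p$-part and small $2$-part, again of the required form). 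For an element of odd prime order $p$ — which exists only when $b\ge1$ — its centralizer equals $C_{\langle x\rangle}(x^{\text{(p-part)}})=\langle x\rangle$ itself, since elements outside $\langle x\rangle$ invert $x$ and cannot centralize a non-central element; and $\langle x\rangle\cong C_{2^a}\times C_{p^b}$ with $\min(a,b)\le1$ is either a $p$-group or of the form $C_{p^b}\times C_2$, matching the hypothesis. Lemma~\ref{lem:sufficient_condition_chordal} then gives chordality.

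The main obstacle is bookkeeping rather than deep mathematics: one must be careful that the ``$C_{q^n}\rtimes P$'' form in Lemma~\ref{lem:sufficient_condition_chordal} genuinely covers $C_G(x^n)=Q_{4n}$ in both sub-cases $a\le1$ and $b\le1$, and that the degenerate cases ($n$ a power of $2$, or $2n$ a prime power, or $Q_8$ itself) are not accidentally excluded — for instance $Q_8$ has $\langle x\rangle\cong C_4$, which is a $2$-group, so $Q_8$ is an EPPO-group and trivially power-chordal, consistent with $a=2,b=0$. I would also double-check the claim implicit in the statement that $\langle x\rangle$ is the \emph{maximal} normal cyclic subgroup, though this is not strictly needed for the proof and follows from the fact that any cyclic subgroup not contained in $\langle x\rangle$ has order $4$ and is not normal (its conjugates are the other $\langle x^iy\rangle$) once $n\ge2$.
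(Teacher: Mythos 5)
Your proof is correct and takes essentially the same route as the paper: necessity via Theorem~\ref{thm:pocho_nilpot} and the fact that power-chordality passes to the subgroup $\langle x\rangle$, and sufficiency by verifying the centralizer hypothesis of Lemma~\ref{lem:sufficient_condition_chordal} for the prime-order elements (you in fact treat the central involution, whose centralizer is all of $Q_{4n}\cong C_{p^b}\rtimes P$, more explicitly than the paper does). The only slip is cosmetic: for an element of odd prime order in the subcase $b=1$, $a\geq 2$, its centralizer $\langle x\rangle\cong C_{2^a}\times C_p$ is neither a $p$-group nor of the form $C_{p^b}\times C_2$ as you wrote, but it is still of the shape $C_{2^a}\rtimes C_p$ allowed by the lemma, so your argument goes through unchanged.
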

\begin{proof}
	Note that, for power-chordality of $Q_{4n}$, it is necessary that $|Q_{4n}|$ has at most two distinct prime divisors, and so $|x|$ has at most two prime divisors. If $|x|=2^a$, then $Q_{4n}$ is a $2$-group and power-chordal by Theorem~\ref{thm:pocho_nilpot}. Otherwise, $p^b$ divides $|x|$ for some odd prime $p$. By definition, $y$ normalizes $\langle x \rangle$. All elements of odd order in $Q_{4n}$ are contained in $\langle x \rangle$ and have centralizers equal to $\langle x \rangle$. The other elements have even order and act on $\langle x \rangle$ by inversion, so their centralizers are $2$-groups. Thus, assuming that either $a$ or $b$ is at most $1$, Lemma~\ref{lem:sufficient_condition_chordal} implies chordality of $\pow(Q_{4n})$.
\end{proof}

Next, we give a precise description of direct factors of power-chordal groups in terms of elementary group theoretic notions. In all cases, possible direct factors of chordal groups are very restricted but depending on their exact structure, we are left to deal with a number of situations. We point out that the results match those found in~\cite[Theorem 3.6-3.8]{doostabadi}, whenever the theorems apply (the authors give one-sided restrictions in the case of forbidden $4$-cycles assuming that the group has non-trivial center). 
\begin{thm}\label{lem:direct_prod}
	Let $G=H\times K$ be a non-trivial direct product with $H$ and $K$ power-chordal. Then, up to interchanging roles of $H$ and $K$, $G$ is power-chordal if and only if $G$ is in one of the following cases.
	\begin{enumerate}
		\item $H$ has exponent $p$ for a prime $p$, but $H$ is not cyclic. Then $G$ is power-chordal if and only if 
		$K=\langle w\rangle\rtimes P$ with an element $w$ of order $q^n$ for a prime $q\neq p$ and $n\in\mathbb{N}$, such that
		that $P$ is a $p$-group and $C_P(w^{q^{n-1}})$ is trivial or of exponent $p$. 		
		
		\item $H$ is cyclic of order $p^nq^m$ with distinct primes $p$ and $q$ and $n\in\{0,1\}$.
		Then $G$ is power-chordal if and only if one of the following holds:
		\begin{enumerate}
			\item $n=1$, $m>1$, and $K$ has exponent $p$,
			\item $n=m=1$, $K$ is an $\mathrm{EPPO}$-group and all its elements have order $p$ or $q$,
			\item $n=0$, $m>1$, and $K$ is an $\mathrm{EPPO}$-group with exponent $q^nb$ where $n\in\mathbb{N}_0$ and $b$ is square-free,
			\item $n=0$, $m=1$, and the following conditions are fulfilled: elements of $K$ have prime power order or order $qp^m$ with $m\in\mathbb{N}$.
		Cyclic $p$-subgroups $Z_1$ and $Z_2$ of $K$ only intersect in $\{1\}$, $Z_1$ or $Z_2$ and
		each $q$-element in $K$ centralizes at most one cyclic $p$-subgroup of a given order.	
		\end{enumerate}	
		\item $H=N\rtimes U$ with $N\cong C_{p^m}$ and $U\cong C_{q^n}$, such that
		$C_U(\soc(N))=\{1\}$ holds, with distinct primes $p$ and $q$, and $m,n\geq 1$.
		Then $G$ is power-chordal if and only if one of the following holds:
		\begin{enumerate}
			\item $K$ has exponent $q$,
			\item $m=1$ and $K$ is a cyclic $q$-group,
			\item $K=N'\rtimes U'$ with $N'\cong C_{r^d}$ and $U'\cong C_{q^f}$, for $r$ a prime distinct from $p$ and $q$, $d\geq 1$, and $C_{U'}(\soc(N'))=\{1\}$ holds. If $m>1$ then $d=1$ and $f\in\{0,1\}$ and if $n>1$ then $d=1$.
			
			\item $K=N'\rtimes U'$ with $N'\cong C_{p^dq^e}$ and $U'\cong C_{q^f}$, for $e\in\{0,1\}$ and $d\geq 1$, and $C_{U'}(\soc(N'))=\{1\}$ holds. If $e=1$ then  $m=n=d=1$. If $e=0$ then $m>1$ implies $f\leq 1$ and $n>1$ implies $d=1$.
		\end{enumerate}
		\item $G$ has prime power order.
	\end{enumerate}
\end{thm}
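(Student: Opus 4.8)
The plan is to reduce the statement, in both directions and for every case, to the two criteria already established above: Lemma~\ref{lem:char_prop_C4}, which pins down exactly when $\pow(G)$ admits an induced $4$-cycle once the orders $pqr$ and $p^2q^2$ are excluded, and Lemma~\ref{lem:sufficient_condition_chordal} together with the reduction Lemma~\ref{lem:assume_cycle_pow_red}, which certifies chordality from the centralizers of prime-order elements and reduces a long induced cycle either to an induced $4$-cycle or to a centralizer of non-admissible shape. The elementary inputs making this work are that subgroups, hence direct factors, of power-chordal groups are power-chordal, that the order of $(h,k)\in H\times K$ is $\mathrm{lcm}(|h|,|k|)$, and that $C_{H\times K}\big((x,y)\big)=C_H(x)\times C_K(y)$; combined with the basic fact that a power-chordal group has no element of order $pqr$ or $p^2q^2$, the whole problem becomes finite arithmetic-plus-centralizer bookkeeping: in each configuration one checks that $H\times K$ has no element of order $p^2q^2$ and no induced $4$-cycle.

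First I would settle the case split. If $H$ and $K$ are both $p$-groups for a single prime $p$ we are in Case~4, so assume otherwise; then, after possibly interchanging $H$ and $K$, there are a prime $p$ dividing some element order of $H$ and a distinct prime $q$ dividing some element order of $K$. A bookkeeping argument using only that $G$ has no element of order $pqr$ or $p^2q^2$ now forces essentially two primes to govern the situation — every element of $H$ has prime-power order or order divisible by exactly $p$ and $q$, at most one prime has its square dividing an element order across the two factors, and so on — and, feeding this into Theorem~\ref{thm:pocho_nilpot} (applied to the nilpotent pieces that arise, such as Sylow subgroups and centralizers of elements), pins $H$ down, up to a further swap, to one of: a non-cyclic group of exponent $p$ (Case~1); a cyclic group of order $p^nq^m$ with $n\le1$ (Case~2); or $C_{p^m}\rtimes C_{q^n}$ with $C_U(\soc(N))=\{1\}$ (Case~3). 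Here some care is needed to keep the list exhaustive and consistent with the ``up to interchanging $H$ and $K$'' clause; for instance a group such as $C_{p^2}\times C_q^{\,m}$ must be recognised as Case~1 with the roles reversed, not as Case~2.

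With the shape of $H$ fixed, I would treat each (sub-)case separately and verify the stated ``if and only if'' by the same two-step recipe. For the direction ``$G$ lies in one of the cases $\Rightarrow$ $G$ is power-chordal'', the exponent hypotheses and the ``$n\in\{0,1\}$''-type restrictions are exactly what guarantees that $H\times K$ has no element of order $p^2q^2$; one then rules out induced $4$-cycles by checking conditions~(1)--(3) of Lemma~\ref{lem:char_prop_C4}: using $C_{H\times K}((x,y))=C_H(x)\times C_K(y)$, one shows there is no pair of distinct elements $g,h$ of equal prime-power order $\geq\ell^2$ (for a prime $\ell$) with $g^\ell=h^\ell$ and $\langle g\rangle\neq\langle h\rangle$ that is jointly centralized by an element of a prime distinct from $\ell$. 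For the converse one argues contrapositively: if $K$ fails the condition in its case, one exhibits such a pair — or an element of order $p^2q^2$ — explicitly inside $H\times K$, producing an induced $4$-cycle or a forbidden element order and contradicting chordality. The conditions listed in the cases are precisely the negations of these constructions: ``$C_P(w^{q^{n-1}})$ trivial or of exponent $p$'' in Case~1 forbids an element of order $p^2$ in $K$ centralized by the order-$q$ element of $\langle w\rangle$; ``cyclic $p$-subgroups of $K$ intersect trivially'' and ``each $q$-element centralizes at most one cyclic $p$-subgroup of a given order'' in Cases~2(d) and~3(c)--(d) forbid, respectively, two distinct cyclic $p$-subgroups sharing a maximal subgroup, and a $q$-element jointly centralizing two such subgroups; and the square-freeness clauses forbid elements of order $p^2q^2$.

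I expect the main obstacle to be organizational rather than conceptual. One must make sure that long induced cycles cannot escape detection: although Lemma~\ref{lem:assume_cycle_pow_red} reduces matters to power-reduced cycles, turning a power-reduced cycle of length $\geq 6$ into an induced $4$-cycle or a non-admissible centralizer still has to be verified in the present near-abelian families (in each case $H$ is abelian or a one-prime extension of a cyclic group, which keeps this tractable). The bookkeeping reconciling the asymmetric-looking case list with ``up to interchanging $H$ and $K$'' — effectively tabulating which power-chordal groups can occur as $H$ and under which case — is also delicate. The single hardest point is the translation of condition~(2) of Lemma~\ref{lem:char_prop_C4} into the intersection conditions on cyclic $p$-subgroups in Cases~2(d) and~3(c)--(d): there the obstruction to chordality comes not from a large centralizer but from the coexistence of two distinct cyclic subgroups of the same prime-power order $\geq p^2$ that meet in a subgroup of index $p$ and are simultaneously centralized by an element of the complementary prime, and unwinding exactly when this can happen inside a direct product is where most of the work lies.
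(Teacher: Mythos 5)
There is a genuine gap in your central reduction. You claim that, once elements of order $pqr$ and $p^2q^2$ are excluded, chordality of $G=H\times K$ in each case amounts to checking that there is no induced $4$-cycle, with long cycles disposed of by Lemma~\ref{lem:assume_cycle_pow_red} and Lemma~\ref{lem:sufficient_condition_chordal}. Neither tool does this job. Lemma~\ref{lem:assume_cycle_pow_red} only converts a long induced cycle into a \emph{power-reduced} induced cycle of length at least $4$ -- not into a $4$-cycle -- so induced cycles of length $6,8,\dots$ must still be excluded by hand. And Lemma~\ref{lem:sufficient_condition_chordal} is only a sufficient criterion whose centralizer hypothesis fails in several of the cases you must certify: already in Case 1, for a $q$-element $x$ of $G=H\times(\langle w\rangle\rtimes P)$ the centralizer contains $H\times\langle x\rangle$ with $H$ a non-cyclic $p$-group, which is neither a $q$-group nor of the form $C_{p^a}\rtimes Q$. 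Most importantly, ``no forbidden element order and no induced $4$-cycle'' is simply not equivalent to chordality: a group such as $A\times B$, where $A$ contains two distinct subgroups of order $p$ and $B$ two distinct subgroups of order $q$, has no element of order $p^2q$ (so no induced $4$-cycle by Corollary~\ref{cor:4_cycles_high_order}) and no element of order $p^2q^2$ or $pqr$, yet its power graph contains an induced $8$-cycle. This is precisely why conditions like ``each $q$-element of $K$ centralizes at most one cyclic $p$-subgroup of a given order'' appear in the theorem: their violation produces an $8$-cycle, not a $4$-cycle, so your recipe would wrongly accept such groups. The same issue undermines your exhaustiveness argument: forcing $K$ to have a normal cyclic subgroup of order coprime to $p$ (hence $K=\langle w\rangle\rtimes P$) cannot be done by element-order arithmetic plus Theorem~\ref{thm:pocho_nilpot}; the paper derives it from exactly these $8$-cycle constructions.

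What the actual proof needs, case by case, is a direct argument that power-reduced cycles of length greater than $4$ cannot exist: by Lemma~\ref{lem:primes_alternate} the in-vertices of such a cycle alternate between two primes, so the cycle would contain two distinct, non-adjacent in-vertices of order $q$ (say), each centralizing two distinct cyclic $p$-subgroups; the hypotheses of each case are engineered so that the relevant cyclic subgroup of $G$ is unique (or the only candidates lie in $H$), which kills these cycles, and only then are $4$-cycles excluded separately via Lemma~\ref{lem:char_prop_C4}. Your proposal gestures at this (``turning a power-reduced cycle of length $\geq 6$ into an induced $4$-cycle or a non-admissible centralizer still has to be verified''), but that is exactly the substantive content of the theorem, and the two lemmas you designate for it cannot supply it.
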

\begin{proof}
	\begin{enumerate}
		\item By assumption there exist elements $h$ and $h'$ in $H$ that generate distinct cyclic subgroups. Let $k\in K$ be of order
		coprime to $p$ (otherwise, $p$-groups are handled in Case 4). If $k$ does not generate a unique cyclic subgroup of order $|k|$, then there is another element  of order coprime to $p$, say $k'$, which is not joined
		with $k$ in $\pow(K)$. Then there is an induced $8$-cycle with in-vertices $(h,k,h',k')$ (the out-vertices can be taken to be the products of consecutive in-vertices). Thus, there can be at most one cyclic subgroup of $K$ of a fixed order coprime to $p$ and so
		the elements of $K$ of orders coprime to $p$ generate a normal cyclic subgroup $Z$. However, if $G$ is chordal then $G$ does not contain
		elements whose order has three distinct prime divisors, so $Z$ is either trivial or a $q$-group for some prime $q\neq p$. By definition, $K/Z$ is a $p$-group and furthermore $C_{K/Z}(\soc(Z))$ has exponent $p$ or otherwise $G$ contains subgroups isomorphic to $C_p\times C_{p^2}\times C_q$ and is then not power-chordal by Theorem~\ref{thm:pocho_nilpot}. Thus, the given restrictions are necessary. To see that they are sufficient, assume that $\mathfrak{C}$ is a power-reduced induced cycle in $G$ despite $G$ fulfilling the assumptions. It is sufficient to consider power-reduced cycles by Lemma~\ref{lem:assume_cycle_pow_red}.
		If $\mathfrak{C}$ has length greater than $4$, then by Lemma~\ref{lem:primes_alternate} there must be distinct in-vertices of order $q$, which is clearly not possible. Thus, $\mathfrak{C}$ has length $4$. By Lemma~\ref{lem:char_prop_C4} there exist two elements $g$ and $h$ in $G$ of order $r^m$ for a prime $r\in \{p,q\}$ and $m>1$ such that $g^r=h^r$ but $\langle g \rangle \neq \langle h \rangle$. By assumption, here $r$ can only be equal to $p$, but then the out-vertices of $\mathfrak{C}$ must have orders divisible by $p^2q$ and such element orders do not exist in $G$.	
		 
		 \item \begin{enumerate}
		 	\item For chordality of $G$, the restriction $\exp(K)=p$ is necessary and sufficient by Theorem~\ref{thm:pocho_nilpot}. 
		\item Again, the given restrictions are necessary by Theorem~\ref{thm:pocho_nilpot}.
		For the other direction, assume the restrictions hold. Then there is a unique cyclic subgroup of order $p$ (resp. $q$) that centralizes
		$q$-elements in $K$ (resp. $p$-elements in $K$). Thus, there cannot be a power-reduced cycle of length greater than $4$ in $\pow(G)$, since its in-vertices would need to centralize two distinct cyclic subgroups of coprime order, so its in-vertices cannot have a non-trivial component in $K$.
		Induced $4$-cycles are excluded by the non-existence of elements of order greater than $pq$ (cf. Lemma~\ref{lem:char_prop_C4}). The claim follows by Lemma~\ref{lem:assume_cycle_pow_red}.
		\item The given restrictions are necessary by Theorem~\ref{thm:pocho_nilpot}.
		For the other direction, if $y$ is an in-vertex of an induced power-reduced cycle in $\pow(G)$ and $|y|\neq q$,
		then the cycle cannot have more than $4$ vertices. This follows from the fact that, since $K$ is $\mathrm{EPPO}$, $y$
		centralizes a unique $C_q$ in $G$. By Lemma~\ref{lem:char_prop_C4} we can exclude power-reduced $4$-cycles. If $g$ and $h$ in $G$ are of order $q^m$ and commute with a $p$-element for a prime $p$ distinct from $q$, then $g$ and $h$ live in $H$ and generate the same cyclic subgroup.
		
		\item If $K$ contains elements of different orders than the ones stated, then $G$ contains subgroups of the form
		$C_{pr}\times C_{p^2}$ or $C_{pqr}$ with pairwise distinct primes $p,q,r$ and is thus not power-chordal.
		If $K$ contains cyclic $p$-subgroups $Z_1$ and $Z_2$ that intersect in a proper subgroup of both $Z_1$ and $Z_2$,
		then there is an induced $4$-cycle in $\pow(G)$ on the set $\{x,xz_1,xz_2,z\}$, where $z_i$ generates $Z_i$, $z$ generates $Z_1\cap Z_2$ and $x$ generates $H$. If $K$ contains a $q$-element $y$ that centralizes two distinct cyclic $p$-subgroups $\langle
		z_1\rangle$ and $\langle z_2\rangle$ with $|z_1|=|z_2|$,
		then there is an induced $8$-cycle with in-vertices $\{x,z_1,y,z_2\}$. Thus, all conditions listed above are necessary. 

Assume now that they are all fulfilled but $G$ is not power-chordal. Then there is an induced cycle $\mathfrak{C}$ in $\pow(G)$ and we may assume
		that it is power-reduced by Lemma~\ref{lem:assume_cycle_pow_red}. Then, without loss of generality, we may further assume that each out-vertex in $\mathfrak{C}$ is the product of the two in-vertices it joins. Since $K$ is assumed to be power-chordal, $\mathfrak{C}$ cannot be fully contained in $K$, thus there is some in-vertex $y$ of order $q$ that is not contained in $K$. Since by Lemma~\ref{lem:primes_alternate} 
		orders of consecutive in-vertices in $\mathfrak{C}$ are distinct, $y$ centralizes two distinct cyclic $p$-subgroups generated by
		the in-vertices that are at distance $2$ from $y$. By assumption, $y$ must then be contained in $H$, otherwise its centralizer would be too small. But the same argument works for all in-vertices of order $q$ in $\mathfrak{C}$ and since $H\cong C_q$ holds,
		there is no other in-vertex of order $q$ so $\mathfrak{C}$ is a $4$-cycle. To form a $4$-cycle, the out-vertices of $\mathfrak{C}$ must generate distinct cyclic subgroups of $G$ that intersect in an element of order $p$ (cf. Lemma~\ref{lem:char_prop_C4}), a contradiction to the assumptions.
		 \end{enumerate}
		
		\item Write $H=N\rtimes U$ with $N\cong C_{p^m}$.\begin{enumerate} 
			\item If $K$ has exponent $q$, then  all elements of order $p^iq$ in $G$ lie in $N\times K$. Moreover $N\times K$ is power-chordal by Theorem~\ref{thm:pocho_nilpot}. If $G$ is not power-chordal, then there is a power-reduced cycle in $\pow(G)$ by Lemma~\ref{lem:assume_cycle_pow_red}. We note that all of the out-vertices have order $p^iq$ for an appropriate $i$ and thus live in $N\times K$, a contradiction. 
			\item If $K$ is a cyclic $q$-group, then all elements of order $pq^i$ in $G$ lie in $N\times K$. Since $m=1$, the latter is power-chordal as in Part 3a) and so is $G$.
			\item
			Assume that $G$ fulfills the assumptions and contains a power-reduced cycle $\mathfrak{C}$. If the length of $\mathfrak{C}$ is greater than $4$, then, since $G$ contains unique maximal cyclic $p$-subgroups and $r$-subgroups, respectively, $\mathfrak{C}^-$ must contain an element $v$ of order $q$ and the in-vertices of $\mathfrak{C}$ that are
			at distance $2$ from $v$ must be of order $p$ and $r$. But no element in $G$ commutes with
			a $p$-element and an $r$-element simultaneously. If the length of $\mathfrak{C}$ is $4$, then by Lemma~\ref{lem:char_prop_C4}, one of $H$ or $K$ must contain distinct Sylow $q$-subgroups that
			intersect non-trivially. Since we assume $C_U(\soc(N))=C_{U'}(\soc(N'))=\{1\}$, Sylow $q$-subgroups of $H$ (or $K$, respectively) only intersect trivially, a contradiction.
			\item 
			The $q$-elements of $H$ centralize at most one unique $C_p$ in $G$, so they cannot be in-vertices of power-reduced cycles of length greater than $4$ (a similar argument was used in Part 2c). 
	
			If $e=0$, then the same argument applies with roles of $K$ and $H$ interchanged, so no cycles of length greater than $4$ exist. We can exclude $4$-cycles using the exact same argument that was used in Part 3c).

			If $e=1$, then by the other assumptions $G$ only contains elements of order $p$, $q$ or $pq$, so by Lemma~\ref{lem:char_prop_C4} there are no $4$-cycles in $\pow(G)$. Write $K=N'\rtimes U'$ where $N'\cong C_{pq}$. Then $q$-elements of $K$ that are not contained in $N'$ also centralize a unique $C_p$ in $G$ (namely $N$) and as before, they cannot be in-vertices of power-reduced cycles of length greater than $4$. In conclusion, the in-vertices of order $q$ in a power reduced cycle in $\pow(G)$ would all be contained in $N'$, but $N'$ has a unique maximal cyclic $q$-subgroup and then there can be at most one such in-vertex, contradicting the fact that there are no induced $4$-cycles.
		\end{enumerate}
		It remains to prove that one of these cases occurs. Since $H$ has cyclic subgroups of orders $p^m$ and $q^n$, the restrictions of Part 2) apply to $K$ with respect to $p$ and $q$, depending on the values of $m$ and $n$. Moreover, by definition $H$ contains distinct cyclic subgroups of order $q$,
		so by the same argument as in the proof of Part 1), we have that $K$ is either a $q$-group, or $K$ contains a normal cyclic $r$-subgroup for a prime $r$ distinct from $q$, whose quotient is a $q$-group or trivial.
		
		In the former case, $K$ is a $q$-group. If $\exp(K)\neq q$ then, by Theorem~\ref{thm:pocho_nilpot}, $m=1$ and $K$ is cyclic.
		In the latter case, write $K=\langle w\rangle\rtimes Q$ with a $q$-group $Q$ and $|w|=r^d$ with $d\geq 1$. If $Q$ is trivial, there is nothing to show, so assume otherwise.
		If $r\neq p$, then $G$ contains a subgroup of the form 
		$C_{p^m}\times C_r\times C_Q(w^{r^{d-1}})$, so $C_Q(w^{r^{d-1}})$ is trivial by Theorem~\ref{thm:pocho_nilpot} and if $m>1$ or $n>1$ holds, then $d=1$ must hold. Moreover, $Q$ is isomorphic to a subgroup of $\aut(C_r)$ and thus cyclic, say $Q\cong C_{q^f}$ and if $m>1$ then $f$ is at most $1$.
		
		If $r=p$, then $G$ contains a subgroup of the form 
		$C_{p^m}\times C_p\times C_Q(w^{p^{d-1}})$, so $C_Q(w^{p^{d-1}})$ is either $C_{q^e}$ for some $e\geq 1$ or trivial. Moreover, $Q/C_Q(w^{p^{d-1}})$ is isomorphic to a subgroup of $\aut(C_p)$ and thus cyclic.
		In any case, $K$ is a subgroup of $(C_{p^d q^e}\rtimes C_{q^f})$ as claimed in Part 2c). To see that $C_Q(w^{p^{d-1}})$ splits
		from (or is equal to) $Q$, assume otherwise. There is some $x\in Q$ that generates $Q/C_Q(w^{p^{d-1}})$, since $\aut(\langle w^{p^{d-1}}\rangle)$ is cyclic of order $p$.
		If $Q$ is not a split extension of $\langle x\rangle$ by $C_Q(w^{p^{d-1}})$, then $x^i\in C_Q(w^{p^{d-1}})$ for some $i$ such that
		$x^i\neq 1$ holds. But $x$ does not centralize $\langle w^{p^{d-1}}\rangle$, so there are conjugates of $x$ that generate a cyclic subgroup distinct from $\langle x\rangle$ in $K$. These cyclic subgroups intersect non-trivially, so they give rise to a $4$-cycle in
		$K\times C_p$.
		
		We deduce restrictions on $m,n,d,e$, and $f$. There are subgroups of the form $C_{q^n}\times C_{p^d}\times C_{q^e}$ and
		$C_{p^m}\times C_{p^d}\times C_{q^e}$ in $G$ (recall that we assume $K$ is not a $q$-group, so $d\geq 1$). By Theorem~\ref{thm:pocho_nilpot}
		we have $e\in\{0,1\}$ and if $e=1$ holds, then $m$, $n$ and $d$ must be $1$. Furthermore, if $e=1$ then $f\in \{0,1\}$ or otherwise
		$K$ contains distinct cyclic subgroups of order $q^2$ that intersect non-trivially, giving rise to an induced $4$-cycle in $\pow(G)$.
		If $e=0$, then we still have subgroups of $G$ isomorphic to $C_{p^m}\times C_{q^f}$ and $C_{q^n}\times C_{p^d}$, so
		if $m>1$, then $f\in\{0,1\}$ and if $n>1$, then $d=1$.
		
		\item Recall that groups of prime power order are always power-chordal by Theorem~\ref{thm:pocho_nilpot}. We need to argue here that
		this is the only possibility left.
		To this end, assume that neither $H$ nor $K$ is cyclic or of prime exponent. We show that one of $H$ and $K$ can then be placed in Part 3) of the present Lemma. For appropriate primes $p$ and $q$ (not necessarily distinct) there exist distinct cyclic $p$-subgroups
		$Z_1$ and $Z_2$ of $H$ and distinct cyclic $q$-subgroups $C_1$ and $C_2$ of $K$. If $p\neq q$ holds, then $G$ is not power-chordal and we can find an induced $8$-cycle among all products of elements in these cyclic subgroups. Thus, if $r$ is a prime distinct from $p$, then
		both $H$ and $K$ have unique maximal cyclic subgroups whose orders are $1$ or divisible by $r$. We may assume that $G$ is not of prime power order, so without loss of generality, $H$ contains elements of order coprime to $p$. The uniqueness of maximal cyclic $r$-subgroups for primes $r\neq p$ implies that all elements of order coprime to $p$ generate a normal cyclic subgroup $Z$ of $H$.
		By Theorem~\ref{thm:pocho_nilpot}, $Z$ has order $r^m$ for some prime $r$ and $|H|$ is only divisible by $p$ and $r$.
		So $H=\langle w\rangle\rtimes P$ with $|w|=r^m$ and a $p$-group $P$. 
		
		Now $K$ is not a $p$-group, otherwise $K$ would have exponent $p$ by Theorem~\ref{thm:pocho_nilpot} (we assume that $K$ is not cyclic here). But if $K$ is not a $p$-group, the same structural implications we deduced for $H$ also apply to $K$, say $K=\langle w_1\rangle\rtimes P_1$. Then $G$ contains a subgroup of the from $\langle w\rangle\times C_P(w)\times\langle w_1\rangle\times C_{P_1}(w_1)$ which, by Theorem~\ref{thm:pocho_nilpot}, is only power-chordal
		if one of the centralizers is trivial. But then we are in Part 3).
	\end{enumerate}
\end{proof}

As an application of Theorem~\ref{lem:direct_prod} we explore the possible socles of power-chordal groups and thereby restrict the structure of arbitrary power-chordal groups.

\begin{lem}\label{lem:socles}
	If $G$ is power-chordal, then $\soc(G)$ is isomorphic to $C_p^m\times C_q$, $C_p^m$, $T$, or $C_p\times T$ with a non-abelian simple group $T$, $m\in\mathbb{N}$, and distinct primes $p$ and $q$. In the last case, if $p$ does not divide $|T|$, then $T$ is a simple $\mathrm{EPPO}$-group. 
\end{lem}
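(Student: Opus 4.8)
The plan is to use that $\soc(G)$ is itself power-chordal — since $\pow(H)$ is an induced subgraph of $\pow(G)$ for every subgroup $H\leq G$, and induced subgraphs of chordal graphs are chordal — and then to feed the structure theory of the socle into Theorem~\ref{lem:direct_prod}. Recall that $\soc(G)$ is a direct product of minimal normal subgroups, each of which is characteristically simple, hence of the form $T^k$ for a simple group $T$ (abelian, i.e.\ $T\cong C_p$, or non-abelian). Collecting the abelian ones we may write $\soc(G)=A\times N$, where $A=\prod_p C_p^{m_p}$ is elementary abelian of possibly mixed prime type and $N$ is the product of the non-abelian minimal normal subgroups. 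We will use throughout the elementary facts that a non-abelian finite simple group is non-cyclic, is not of prime exponent (being non-solvable it is not a $p$-group), and has order divisible by at least three primes (Burnside).

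First I would bound $N$. If $N$ contained a subgroup $T_1\times T_2$ with $T_1,T_2$ non-abelian simple (this covers both two distinct non-abelian minimal normal subgroups and a single one with $k\geq 2$), then $T_1\times T_2$ would be a non-trivial direct product of two power-chordal groups, each non-cyclic and not of prime exponent. By Theorem~\ref{lem:direct_prod} the only surviving case is Case~4, forcing $|T_1\times T_2|$ to be a prime power — absurd. Hence $N$ is trivial or a single non-abelian simple group $T$ (which, by Theorem~\ref{thm:main}, is then one of the groups listed there, although we only need that three primes divide $|T|$).

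Next I would bound $A$. If three distinct primes occurred in $A$, then $\soc(G)$ would contain an element of order $pqr$, impossible for a power-chordal group; so $A$ involves at most two primes. If $A\cong C_p^{m_p}\times C_q^{m_q}$ with $p\neq q$ and both exponents positive, then applying Theorem~\ref{lem:direct_prod} with, say, $H=C_p^{m_p}$: when $m_p\geq 2$ this $H$ has prime exponent and is non-cyclic, so Case~1 applies and forces $C_q^{m_q}$ to be of the form $\langle w\rangle\rtimes P$ with $P$ a $p$-group, which for the abelian $q$-group $C_q^{m_q}$ means $m_q=1$; by symmetry, up to swapping the two primes, $A\cong C_p^m\times C_q$. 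If in addition $N=T$ is non-trivial, then $A$ cannot involve two distinct primes $p,q$ (pick $r\mid|T|$ with $r\notin\{p,q\}$, which exists, and obtain an element of order $pqr$ in $\soc(G)$), and $A\cong C_p^m$ with $m\geq 2$ is impossible (Theorem~\ref{lem:direct_prod}, Case~1 with $H=C_p^m$, $K=T$, would force $T=\langle w\rangle\rtimes P$, hence solvable). So with $N=T$ we get $\soc(G)\cong T$ or $T\times C_p$, and with $N$ trivial we get $\soc(G)\cong C_p^m$ or $C_p^m\times C_q$, as claimed.

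Finally, the $\mathrm{EPPO}$ refinement. Suppose $\soc(G)\cong T\times C_p$ with $p\nmid|T|$. Apply Theorem~\ref{lem:direct_prod} with $H=C_p$ and $K=T$: as $C_p$ is cyclic of prime order we are in Case~2 with exponents $n=0$, $m=1$ (the single prime there being our $p$), i.e.\ in subcase~(d), whose necessary condition says that every element of $K=T$ has prime-power order or order $p\cdot r^{j}$ for a prime $r\neq p$ and $j\geq 1$. Since $p\nmid|T|$, the second alternative is impossible, so every element of $T$ has prime-power order, i.e.\ $T$ is an $\mathrm{EPPO}$-group, completing the proof. The main obstacle I anticipate is purely bookkeeping: verifying in each application that the relevant sub-product of $\soc(G)$ is a genuine non-trivial direct product with both factors power-chordal, and correctly matching it to the right case (and, for Case~2, the right subcase) of Theorem~\ref{lem:direct_prod}, whose case split is organized by structural type of the factors rather than by a single clean invariant.
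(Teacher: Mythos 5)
Your proposal is correct, and it follows the same overall strategy as the paper: decompose $\soc(G)$ into its abelian part and its non-abelian simple factors, and feed the resulting direct products into Theorem~\ref{lem:direct_prod}. The one point of divergence is tactical: where you rule out two non-abelian simple factors (and, with $N=T$ present, a $C_p^2$ factor) by checking that no case of Theorem~\ref{lem:direct_prod} can hold, the paper does these two steps by hand, exhibiting an explicit induced $8$-cycle built from two distinct cyclic subgroups of order $p$ in one factor and two distinct cyclic subgroups of order $q$ in the other (using that a non-abelian simple group has no normal cyclic subgroup), and then simply states that ``the rest follows from Theorem~\ref{lem:direct_prod}''; you instead spell out that remaining bookkeeping (the structure of the abelian part via Case~1, and the $\mathrm{EPPO}$ refinement via Case~2(d)), which is a useful elaboration. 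The paper's explicit cycles are more self-contained for those two steps, while your uniform appeal to the theorem is cleaner; just note that when you say ``the only surviving case is Case~4'' for $T_1\times T_2$, your stated reasons (non-cyclic, not of prime exponent) only dispose of Cases~1 and~2 — Case~3 also needs the (trivial, and covered by the non-solvability remark in your preamble) observation that a non-abelian simple group is not of the form $C_{p^m}\rtimes C_{q^n}$.
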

\begin{proof}
	Write $\soc(G)=A\times T_1\times \dots\times T_m$ with an abelian group $A$ and non-abelian simple groups $T_i$.
	If $G$ is power-chordal, then $m\leq 1$ holds. Otherwise there would be elements $x_1,x_2\in T_1$ of order $p$ and $y_1,y_2\in T_2$ of order $q$ with distinct primes $p$ and $q$, such that $\langle x_1\rangle\neq\langle x_2\rangle$ and $\langle y_1\rangle\neq\langle y_2\rangle$ hold. Since $x_i$ commutes with $y_j$ for all combinations of $i$ and $j$, the four elements constructed above give rise to an $8$-cycle in $\pow(G)$.
	Since a non-abelian simple group $T$ does not admit normal cyclic subgroups, for each prime $p$ dividing $|T|$ we have that
	$T$ contains distinct cyclic subgroups of order $p$. Thus, $C_p^2\times T$ is never power-chordal.
	The rest follows from Theorem~\ref{lem:direct_prod}.
\end{proof}

In Section~\ref{sec:prelims_simple_grps} we give a classification of simple power-chordal groups. Thus, if $p$ does divide $|T|$, then the pairs $(T,p)$ are classified and one could in principle work out the combinations where $C_p\times T$ is
power-chordal. We give an example by discussing the alternating groups. All elements of $\mathrm{Alt}(5)$ have order $2$, $3$, or $5$, so
$C_p\times \mathrm{Alt}(5)$ is power-chordal for all choices of $p$ (even for $p>5$). However, $\mathrm{Alt}(6)$ contains distinct cyclic subgroups of order $4$ that intersect in a $C_2$, so $C_p\times \mathrm{Alt}(6)$ can only be power-chordal for $p=2$, in which case it is indeed power-chordal as we checked using \textsc{GAP}~\cite{GAP4}.
This implies that $\mathrm{Alt}(7)\times C_p$ can only be power-chordal if $p=2$, which it again is. Later we show that $\mathrm{Alt}(n)$ with $n\geq 8$ is not power-chordal (see Lemma~\ref{lem:chordality_an_sn} for concrete arguments).

\subsection{Building paths in the power graph}
Before we turn to simple groups, we give two sufficient conditions for non-chordality of power graphs. In both cases, the general idea is that finiteness of $G$ makes it necessary that paths in $\pow(G)$ are not extendable beyond a certain point or eventually can be extended to a cycle.
This in turn induces restrictions on the structure of $G$. The first restriction is concerned with centralizers of elements in $G$ and complements Lemma~\ref{lem:sufficient_condition_chordal}. 

\begin{lem}
	If $G$ is power-chordal, then each non-singleton connected component of the Gruenberg-Kegel graph of $G$ contains a prime $p$ such that
	the following holds: $G$ contains an element $x$ with $|x|=p$ and $C_G(x)=Z\rtimes P$, where $Z$ is a cyclic $q$-group for a prime $q\neq p$
	and $P$ is a $p$-group. Moreover $C_{P}(Z)$ is either cyclic or of exponent $p$.
\end{lem}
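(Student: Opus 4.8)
The plan is to argue by contradiction using an extremal induced path, following the pattern announced above: in a power-chordal group an induced path cannot be extended indefinitely, so a longest one exists, and the impossibility of extending it forces strong local structure. Fix a non-singleton connected component $\mathcal{C}$ of the Gruenberg-Kegel graph of $G$. It contains an edge $p\sim q$, so $G$ has an element of order $pq$; splitting it into its $p$- and $q$-parts gives a power-reduced induced path on three vertices whose two in-vertices have orders $p,q\in\mathcal{C}$. Observe that if one in-vertex of a power-reduced induced path has order in $\mathcal{C}$, then so do all of them, because consecutive in-vertices commute and have distinct prime orders, hence adjacent primes in the Gruenberg-Kegel graph. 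So I would take, among all power-reduced induced paths $\wp$ in $\pow(G)$ whose in-vertices have order in $\mathcal{C}$, one of maximal length, with in-vertices $v_1,\dots,v_k$ ($k\ge2$), $p_i:=|v_i|$, and (via Lemma~\ref{lem:connection_pow_com}) an induced path $v_1-\dots-v_k$ of prime-order elements in $\mathrm{Com}(G)$ with consecutive orders distinct. I claim $x:=v_1$ and $p:=p_1$ witness the lemma; write $q:=p_2$ and $C:=C_G(v_1)$. Throughout I use that power-chordality forbids elements of order $pqr$ or $p^2q^2$ and forbids induced cycles, together with Lemma~\ref{lem:connection_pow_com}, which converts an induced cycle of prime-order elements of $\mathrm{Com}(G)$ with consecutive distinct orders into a forbidden induced cycle in $\pow(G)$.

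First I would show $C$ is a $\{p,q\}$-group. We have $q\mid|C|$ since $v_2\in C$. If a third prime $r$ divided $|C|$, pick $w\in C$ of order $r$; then $wv_1$ has order $pr$, so $r\in\mathcal{C}$. Since otherwise $v_1,v_2,w$ would generate an element of order $pqr$, $w$ does not commute with $v_2$. If $w$ commutes with none of $v_3,\dots,v_k$, then $w,\,wv_1,\,v_1,\,v_1v_2,\,v_2,\dots,v_k$ is a strictly longer power-reduced induced path with in-orders in $\mathcal{C}$ — one verifies there are no accidental chords from $w$ or $wv_1$, which only requires tracking the primes involved and the distinctness of the cyclic subgroups $\langle v_i\rangle$ — contradicting maximality. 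Otherwise $w$ commutes with some $v_i$; taking $i\ge3$ minimal yields an induced cycle $v_1-\dots-v_i-w-v_1$ in $\mathrm{Com}(G)$, which contradicts chordality whenever its consecutive orders are distinct; the only obstruction is that $|w|=r$ might coincide with $|v_i|$, a configuration one then rules out by a separate direct argument in $\pow(G)$ involving the distinct cyclic $r$-subgroups $\langle w\rangle$ and $\langle v_i\rangle$. So $C$ is a $\{p,q\}$-group.

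Next I would determine the Sylow structure of $C$. Since $v_1\in Z(C)$ has order $p$, it centralizes any Sylow $q$-subgroup $Q$ of $C$, so $\langle v_1\rangle\times Q\le G$ is nilpotent and power-chordal; by Theorem~\ref{thm:pocho_nilpot} this forces $Q$ to be cyclic or of exponent $q$. If $Q$ were elementary abelian of rank at least $2$, take $y$ of order $q$ in a Sylow $q$-subgroup of $C$ containing $v_2$, with $\langle y\rangle\ne\langle v_2\rangle$; then $y,\,yv_1,\,v_1,\,v_1v_2,\,v_2,\dots,v_k$ is a strictly longer power-reduced induced path (here one uses that $y$ and $v_2$ commute but are non-adjacent in $\pow(G)$, being distinct cyclic subgroups of order $q$), contradicting maximality. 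Hence $Q$ is cyclic. A similar prepending argument, now starting from an order-$q$ element of a second Sylow $q$-subgroup of $C$, shows $Q$ is normal in $C$; so by Schur-Zassenhaus $C=Q\rtimes P$ with $P$ a Sylow $p$-subgroup, which is the asserted form with $Z:=Q=C_{q^n}$. Finally, $v_1$ centralizes the normal subgroup $Q$, hence $v_1\in C_P(Q)$, and $C_P(Q)\times Q\le G$ is nilpotent and power-chordal with cyclic $q$-part; Theorem~\ref{thm:pocho_nilpot} then forces $C_P(Q)$ to be cyclic or of exponent $p$, completing the proof.

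I expect the main obstacle to be the bookkeeping in the extension and cycle steps. Each time a vertex is prepended to the maximal path one must check that neither it nor the new out-vertex creates a chord to any of the remaining vertices or out-vertices; this is routine but delicate, since it hinges on which primes occur and on distinctness of the various cyclic subgroups. The genuinely awkward situations are precisely those in which several elements of the same prime order are forced to be present — along the path or inside $C$ — because then the clean correspondence of Lemma~\ref{lem:connection_pow_com} (which needs consecutive vertices of distinct prime orders) no longer applies and one must argue directly in $\pow(G)$, using repeatedly that two distinct cyclic subgroups of equal prime order are never adjacent in the power graph, even when they commute.
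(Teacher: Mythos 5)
Your overall strategy is essentially the paper's: you take an extremal induced path whose in-vertex orders lie in the given component, let the first in-vertex $x=v_1$ play the role of the paper's ``end point'', and use the impossibility of extending the path to pin down $C_G(x)$, finishing with the nilpotent classification (the paper cites Theorem~\ref{lem:direct_prod} for the last clause, you use Theorem~\ref{thm:pocho_nilpot}; both work). Most of your bookkeeping worries are in fact harmless: commuting alone never creates an edge of $\pow(G)$, so prepending $w,wv_1$ stays induced even when $w$ commutes with some later $v_i$; the only possible chords come from $\langle w\rangle=\langle v_j\rangle$, and that would force $v_j$ to commute with $v_1$, whence $(v_1,u_1,v_2,\dots,v_j,v_1v_j)$ is an induced cycle of length $2j\geq 6$, contradicting chordality. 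So your deferred ``$|w|=|v_i|$'' case and the detour through $\mathrm{Com}(G)$ are not actually needed.

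There is, however, one step that fails as stated: ``a similar prepending argument, now starting from an order-$q$ element of a second Sylow $q$-subgroup of $C$, shows $Q$ is normal in $C$.'' By the time you reach this step you have already shown (via prepending) that $C=C_G(v_1)$ has a \emph{unique} subgroup of order $q$, namely $\langle v_2\rangle$. Hence every Sylow $q$-subgroup of $C$ is cyclic with socle $\langle v_2\rangle$, so all order-$q$ elements of a second Sylow $q$-subgroup generate $\langle v_2\rangle$ and are adjacent to $v_2$ in $\pow(G)$; there is nothing to prepend, and your argument says nothing in precisely the remaining case, namely two distinct cyclic Sylow $q$-subgroups $Q\neq Q'$ of $C$ meeting in a non-trivial subgroup. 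This case needs a different tool. One repair, available in the paper: if $|Q\cap Q'|=q^m\geq q$, pick $g\in Q$ and $h\in Q'$ of order $q^{m+1}$ with $g^q=h^q$ (possible since both $q$-th powers generate $Q\cap Q'$) and $\langle g\rangle\neq\langle h\rangle$; since $v_1$ has order $p\neq q$ and centralizes $\langle g,h\rangle$, Lemma~\ref{lem:char_prop_C4} yields an induced $4$-cycle, contradicting chordality. (The paper's own proof sidesteps the issue by allowing the path to be extended by an in-vertex of composite $q$-power order rather than insisting on power-reduced extensions.) With that repair your proof goes through and coincides in substance with the paper's.
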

\begin{proof}
	If $G$ is not an $\mathrm{EPPO}$-group, i.e., if the Gruenberg-Kegel graph has a non-trivial component, then by definition there exist paths of length $3$ in $G$.
	 
	We say that a group element $x\in G$ is an \emph{end point} of an induced path $\wp$ in $\pow(G)$, if
	$x$ is an in-vertex of $\wp$, such that $x$ has degree $1$ in $\wp$ and such that there is no way to extend $\wp$ beyond $x$ to another
	in-vertex. If end points do not exist at all, then each path can be extended indefinitely and ultimately has to be contained in an induced cycle since $|G|$ is finite. 

On the other hand, if $x$ is an end point of $\wp$ and the latter has length at least $3$, then
	there is another in-vertex $y$ in $\wp$ that has distance $2$ from $x$. We may assume that $x$ has prime order, say $p$, or otherwise
	we can replace $x$ by some appropriate power of $x$ (if this does not define another path, $G$ cannot be power-chordal).
	If $C_G(x)$ contains distinct cyclic $q$-groups of the same order, generated by $y_1$ and $y_2$ say, then
	only one of these can contain or be contained in $\langle y\rangle$, say $y_1$, and then $\wp$ can be extended beyond $x$ via $xy_2$ and $y_2$, contradicting the choice of $x$. 

A similar argument rules out more prime divisors in $C_G(x)$ apart from $p$ and $q$ (here, $q$ must appear since $y$ cannot be a $p$-element if $\wp$ forms a path).
	Thus we have $C_G(x)=Z\rtimes P$ where $Z$ is a cyclic $q$-group and $P$ is a $p$-group. The restrictions on $C_{P}(Z)$ follow immediately from Theorem~\ref{lem:direct_prod}. 

Finally, if $\{p',q'\}$ is any edge of the Gruenberg-Kegel graph, then by definition there exist
	commuting elements $x'$ and $y'$ of order $p'$ and $q'$, respectively. There is an induced $3$-vertex path on $\{x',x'y',y'\}$
	in $\pow(G)$, which can be extended until some end point is reached. But each prime divisor of any element order appearing in such an extended path still belongs to the component of $p'$ and $q'$.
\end{proof}

The second sufficient condition is purely defined in terms of pairs of group elements and their centralizers. As such it is easily verifiable,
while still being able to produce cycles of arbitrary length, which makes it more likely to be effective. For example, many cycles in simple groups emerge from the following lemma.

\begin{lem}\label{lem:conjug_cyclic_property_sufficient}
	If $G$ contains elements $x$ and $y$ of coprime orders such that
	$xy=yx$ holds but neither $\langle x\rangle$ is normal in $C_G(y)$ nor is $\langle y\rangle$ normal in $C_G(x)$, then $\pow(G)$ is non-chordal.
\end{lem}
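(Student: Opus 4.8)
The plan is to directly construct a long induced cycle in $\pow(G)$, or rather, to construct a long induced path in the commuting graph $\mathrm{Com}(G)$ with consecutive vertices of distinct prime orders and then invoke Lemma~\ref{lem:connection_pow_com} to convert it into a power-reduced cycle in $\pow(G)$. Since $\langle x\rangle$ is not normal in $C_G(y)$, there is an element $g\in C_G(y)$ with $\langle x\rangle^g\neq\langle x\rangle$; similarly there is $h\in C_G(x)$ with $\langle y\rangle^h\neq\langle y\rangle$. Replacing $x$ and $y$ by suitable prime-order powers (which is harmless: $\langle x^a\rangle$ is still non-normal in $C_G(y)$ for an appropriate prime power $a$ dividing $|x|$, and likewise for $y$), I may assume $|x|=p$ and $|y|=q$ for distinct primes $p,q$. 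Set $x_1:=x$, $x_2:=x^g$, $y_1:=y$, $y_2:=y^h$; then $x_1\ne x_2$, $y_1\ne y_2$ have prime orders $p$ and $q$ respectively.

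First I would check the relevant commuting relations. We have $x_1y_1=y_1x_1$ by hypothesis. Since $g\in C_G(y)$, $x_2=x^g$ commutes with $y^g=y=y_1$. Since $h\in C_G(x)$, $y_2=y^h$ commutes with $x^h=x=x_1$. So in $\mathrm{Com}(G)$ the four vertices $x_1,x_2,y_1,y_2$ yield at least the path $x_2-y_1-x_1-y_2$, with consecutive vertices of distinct prime orders. If additionally $x_2$ commutes with $y_2$, we would close up a $4$-cycle in $\mathrm{Com}(G)$; more care is needed to ensure the resulting graph is actually an \emph{induced} cycle — i.e.\ that $x_1\not\sim x_2$ would be required for a $4$-cycle, which is false in general since both are $p$-elements and need not commute, but could. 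The cleanest route is to pass directly through $\pow(G)$: the products $z_1:=x_2y_1$, $z_2:=y_1x_1$ (wait, $x_1$ and $y_1$ commute so this is $x_1y_1$), $z_3:=x_1y_2$ are genuine out-vertices joining the respective in-vertices, and one checks that $\{x_2,z_1,y_1,z_2,x_1,z_3,y_2\}$ (closing back appropriately) is induced provided no spurious power-edges appear. The key point that makes it induced is the non-normality: if, say, $x_2$ were joined to $z_2=x_1y_1$ in $\pow(G)$, then $x_2\in\langle x_1 y_1\rangle$ (as $x_2$ has prime order it cannot have $z_2$ as a power unless $z_2$ is also that prime-power element), forcing $\langle x_2\rangle$ to be a subgroup of the cyclic group $\langle x_1,y_1\rangle$, which is normalized by $y_1\in C_G(x_1)$ — and chasing this would contradict the choice of $g$. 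Spelling out exactly which chords are forbidden and why the non-normality rules each of them out is the crux.

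I expect the main obstacle to be bookkeeping: depending on whether $x_2$ commutes with $y_2$ (and whether various elements coincide or generate nested cyclic groups), the construction produces either a $4$-cycle, a $6$-cycle, or an $8$-cycle, and in each case one must verify that every non-edge of the intended cycle is genuinely a non-edge in $\pow(G)$, using the hypothesis to exclude the dangerous chords. The right framing is: suppose for contradiction that no induced cycle arises from any such configuration; then every pair among $\{x_1,x_2\}$ and $\{y_1,y_2\}$ and their products is forced to lie in a common cyclic subgroup, which one argues implies $\langle x\rangle\trianglelefteq C_G(y)$ or $\langle y\rangle\trianglelefteq C_G(x)$, contradicting the hypothesis. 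I would organize the proof around showing that the failure of chordality is equivalent to one of these normality conditions failing, so that the four-element gadget $\{x_1,x_2,y_1,y_2\}$ always witnesses a $4$- or $8$-cycle; the $8$-cycle $(x_1,z,y_1,z',x_2,z'',y_2,z''')$ with in-vertices $(x_1,y_1,x_2,y_2)$ and out-vertices the pairwise products is the robust fallback when no shorter induced cycle is available, exactly as in the $8$-cycle constructions used repeatedly in the proof of Theorem~\ref{lem:direct_prod}.
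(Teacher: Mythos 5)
There is a genuine gap, and it is exactly at the point you flagged and then waved away as ``bookkeeping.'' From the hypotheses you only get the commuting relations $x_1y_1=y_1x_1$, $x_2y_1=y_1x_2$ and $x_1y_2=y_2x_1$, i.e.\ a path $x_2-y_1-x_1-y_2$ in $\mathrm{Com}(G)$, equivalently the induced path $(x_2,\,x_2y_1,\,y_1,\,x_1y_1,\,x_1,\,x_1y_2,\,y_2)$ in $\pow(G)$. To close this into a cycle you need $x_2$ and $y_2$ to commute (the would-be out-vertex $x_2y_2$ joins them only in that case), and nothing in the hypotheses provides this. Crucially, an induced \emph{path} is no obstruction to chordality, so your fallback framing --- ``if no induced cycle arises from this configuration, then every pair is forced into a common cyclic subgroup, contradicting non-normality'' --- does not go through: the configuration can simply fail to close, with no chords and no contradiction. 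Hence the claim that the four-element gadget ``always witnesses a $4$- or $8$-cycle'' is unjustified (and false in general), and your proof stops at an induced $7$-vertex path. A secondary, inessential flaw: the reduction to prime orders is not automatic --- $\langle x\rangle$ non-normal in $C_G(y)$ does not imply some prime-order power of $x$ generates a non-normal subgroup (e.g.\ a non-normal cyclic subgroup of order $4$ in a semidihedral $2$-group has its order-$2$ subgroup central); fortunately coprimality of $|x|$ and $|y|$ is all one needs.

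The paper's proof supplies precisely the missing mechanism: it does not try to close a bounded configuration. Starting from the induced path $(x,\,xy,\,y,\,yx',\,x')$, it observes that $x'$ is \emph{conjugate} to $x$, so $C_G(x')$ is a conjugate of $C_G(x)$ and therefore again contains a conjugate $y''$ of $y$ with $\langle y''\rangle\neq\langle y\rangle$; this lets the path be extended past $x'$, and the same argument applies again at $y''$, and so on. The path thus grows without any a priori bound, and since $G$ is finite the process must eventually produce an induced cycle (of unbounded, configuration-dependent length), which is what non-chordality requires. If you want to salvage your write-up, replace the case analysis on ``$4$-, $6$- or $8$-cycle'' by this conjugation-and-extension step together with the finiteness argument.
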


\begin{proof}
	By assumption, there is a $C_G(x)$-conjugate $y'$ of $y$ and a $C_G(y)$-conjugate $x'$ of $x$ such that
	$\langle y\rangle\neq\langle y'\rangle$ and $\langle x\rangle\neq\langle x'\rangle$ hold. Since $x$ and $y$ have coprime orders, $\pow(G)$ contains an induced path of the form $(x,xy,y,yx',x')$. Since $x'$ is conjugate to $x$, we can now inductively continue this path. More precisely, $x'$ must admit another conjugate of $y$ in its own centralizer $C_G(x')$, say $y''$, such that $y''$ and $y$ generate distinct cyclic subgroups of $G$. Then $y''$ is conjugate to $y$ and thus the path keeps growing by repeating the argument for $y''$ (if $y''\in\langle y'\rangle$ we are done immediately). But $G$ is finite and so eventually this process must produce a cycle in $\pow(G)$.
\end{proof}

We first apply the previous lemma to show that special linear groups over finite fields are in most cases not power-chordal. This result is also used later to rule out infinitely many simple groups while enumerating those with chordal power graph (cf. also~\cite[Theorem 5.5]{cameron2021finite}, where a similar argument was used in the classification of simple power-cograph groups).

\begin{lem}\label{lem:sl3q}
	If $q \not \in \{2,4\}$ then $\spl_3(q)$ contains a cycle of length greater than $3$. Furthermore, such a cycle can be chosen not to contain scalar matrices.  	
\end{lem}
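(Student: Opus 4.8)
The strategy is to apply Lemma \ref{lem:conjug_cyclic_property_sufficient}: I want to exhibit commuting elements $x,y \in \spl_3(q)$ of coprime orders such that neither $\langle x \rangle \trianglelefteq C_G(y)$ nor $\langle y \rangle \trianglelefteq C_G(x)$. The natural source of such pairs is a maximal torus together with a permutation of coordinates. Concretely, I would take a diagonal matrix $x = \mathrm{diag}(a,b,c)$ with $abc=1$ and $a,b,c$ pairwise distinct elements of $\mathbb{F}_q^\times$ (this requires $q$ large enough that three distinct field elements with product $1$ exist, which is where $q \notin \{2,4\}$ — or at least where some small cases need separate handling), and for $y$ a $3$-cycle permutation matrix $P$ of order $3$ that normalizes the diagonal torus $T$. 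The point is that $P$ acts on $T$ by a non-trivial permutation of the eigenvalue-coordinates, so if $x$ is chosen so that $P$ does not fix $\langle x \rangle$, then $\langle x \rangle$ is not normal in $C_G(P) \supseteq \langle x, P\rangle$ — wait, I need $P$ and $x$ to actually commute, so I would instead pick $x$ to be a different torus element: let $y$ generate a copy of $C_3$ inside a torus $T'$ that is $P$-like, and let $x$ be an element conjugated around by elements of $C_G(y)$.

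Let me restructure: the cleanest route is to find $x$ with $C_G(x)$ containing a torus $S$ on which some element $n \in C_G(x)$ of order coprime to $|x|$ acts non-trivially, forcing $\langle y \rangle$ (for $y$ a suitable generator of a cyclic subgroup of $S$) to be non-normal in $C_G(x)$, and symmetrically. I would use $x = \mathrm{diag}(\lambda, \lambda, \lambda^{-2})$ of order, say, $d \mid (q-1)$ with $d$ odd and coprime to $3$; then $C_G(x) \cong \gl_2(q) \cap \spl_3 \cong \gl_2(q)$ roughly (matrices block-diagonal as $2+1$), which contains elements of order divisible by primes dividing $q(q+1)/\gcd$ that do not normalize a fixed cyclic subgroup of a torus inside $\gl_2(q)$. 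Then pick $y$ inside that $\gl_2$-block to be a coprime-order element whose cyclic group is visibly non-normal in $C_G(x) \cong \gl_2(q)$ (e.g.\ an element of a non-split torus of $\gl_2(q)$, which $\spl_2 \leq \gl_2$ conjugates around), and check that $\langle x \rangle \cong C_d$ sits centrally in $C_G(y)$ — this fails normality in the other direction only if $C_G(y)$ is larger than $C_G(x) \cap C_G(y)$, which it is. The bookkeeping here is the part most likely to go wrong: one must verify coprimality of $|x|$ and $|y|$, verify $q \notin \{2,4\}$ is exactly the obstruction to finding suitable field elements (for $q \in \{3,5,7,8,9,\dots\}$ one has enough room), and verify the non-normality in both directions simultaneously.

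For the "furthermore" clause — that the cycle avoids scalar matrices — I would note that the cycle produced in the proof of Lemma \ref{lem:conjug_cyclic_property_sufficient} consists of: the chosen elements $x, y$ and their conjugates (all of which have the same non-scalar Jordan type as $x$ or $y$, hence are non-scalar since conjugation preserves eigenvalue multisets and $x, y$ were chosen non-scalar), together with out-vertices of the form (products of commuting consecutive in-vertices). An out-vertex $uv$ with $u$ a conjugate of $x$ and $v$ a conjugate of $y$, $u,v$ commuting of coprime order, is scalar only if both $u$ and $v$ are scalar, which they are not; alternatively, if one worries about this, one can arrange $x$ and $y$ to generate together a subgroup whose only scalar element is the identity, and the identity is never a vertex of an induced path of length $\geq 3$ since it is adjacent to everything. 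So the scalar-freeness is essentially automatic from the construction, and I would spell out just that one line.

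The main obstacle I anticipate is the simultaneous verification of the two non-normality conditions while keeping the orders coprime — it is easy to kill one direction and accidentally restore normality in the other. I would manage this by keeping $x$ of order a fixed odd prime $\ell$ dividing $(q-1)$ and coprime to $|\spl_2(q)|_{\{\text{relevant}\}}$ if possible, or else handle the handful of small $q$ (say $q \in \{3,5,7,8,9,11\}$) by an explicit element computation, citing \textsc{GAP} if convenient, and give the uniform torus-based argument for all larger $q$.
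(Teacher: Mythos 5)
Your overall strategy -- producing a commuting pair of coprime orders and invoking Lemma~\ref{lem:conjug_cyclic_property_sufficient} -- is exactly the paper's strategy, but your concrete choice of the pair does not satisfy the lemma's two-sided non-normality hypothesis, and the reasoning you offer for it is invalid. With $x=\mathrm{diag}(\lambda,\lambda,\lambda^{-2})$ and $y$ an element of a non-split torus inside the $\gl_2(q)$-block of $C_G(x)$, the element $y$ is regular semisimple in $\spl_3(q)$ (its eigenvalues are $\mu,\mu^q\notin\mathbb{F}_q$ together with a third eigenvalue in $\mathbb{F}_q$), so $C_G(y)$ is an abelian maximal torus of order $q^2-1$; consequently $\langle x\rangle$ is automatically normal in $C_G(y)$ and the hypothesis fails in that direction. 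Your justification ``this fails normality in the other direction only if $C_G(y)$ is larger than $C_G(x)\cap C_G(y)$'' is not a valid criterion: a centralizer can strictly contain the intersection and still normalize (indeed centralize) $\langle x\rangle$, as it does here. The same obstruction is structural: any regular semisimple $y$ has abelian centralizer, and if instead both $x$ and $y$ are non-regular semisimple commuting elements then $y$ is central in $C_G(x)\cong\gl_2(q)$, so $\langle y\rangle\trianglelefteq C_G(x)$. Thus no purely semisimple pair can work, which is precisely why the paper pairs a \emph{unipotent} element $X$ of order $p$ (a transvection, with non-abelian centralizer of order $q^3(q-1)$) with a non-regular semisimple $Y$ of order $q-1$ with centralizer of $\gl_2$-type, and then verifies both non-normality conditions by exhibiting explicit matrices $M_X\in C(X)$, $M_Y\in C(Y)$ that move $\langle Y\rangle$, respectively $\langle X\rangle$. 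A secondary problem: your requirement that $x$ have odd order $d\mid q-1$ coprime to $3$ is vacuous whenever $q-1=2^a3^b$ (e.g.\ $q=7,9,13,25,49,\dots$), so these cases are not ``a handful of small $q$'' that one can delegate to \textsc{GAP}.

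Your treatment of the ``furthermore'' clause is essentially fine as far as the literal statement goes: in the cycle produced by Lemma~\ref{lem:conjug_cyclic_property_sufficient} the in-vertices are conjugates of the non-scalar elements $x,y$, and an out-vertex $uv$ with $u,v$ commuting of coprime orders can be scalar only if $u,v\in\langle uv\rangle$ are themselves scalar. Note, however, that the paper proves something slightly stronger and more useful for its later applications: the explicit witnesses $X,Y,M_X,M_Y$ retain their properties modulo any subgroup $S$ of scalars, so Lemma~\ref{lem:conjug_cyclic_property_sufficient} applies directly in $\spl_3(q)/S$, giving an induced cycle in the \emph{quotient} (which is what is needed when $\spl_3(q)$ only embeds in a simple group up to scalars); a scalar-free cycle in $\spl_3(q)$ does not by itself yield an induced cycle in the quotient power graph.
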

\begin{proof}
	Let $p$ be prime such that $q=p^m$ for some $m\in\mathbb{N}$.
	We find elements $X$ and $Y$ in $\spl_3(q)$ of orders $p$ and $q-1$ which satisfy Lemma~\ref{lem:conjug_cyclic_property_sufficient}.  
	For $n\in \mathbb{N}$ define
	\begin{eqnarray*} 
		X_n := \begin{bmatrix}
		1 & n & n \\
		0 & 1 & 0 \\
		0 & 0 & 1 
		\end{bmatrix}\text{ and }
		Y_n := \begin{bmatrix}
		y^n & 0 & 0 \\
		0 & y^n & y^n-y^{-2n} \\
		0 & 0 & y^{-2n} 
		\end{bmatrix},
	\end{eqnarray*}
	where $y$ is of order $q-1$ in $\mathbb{F}_q^*$. Let $X:=X_1$ and $Y:=Y_1$ and note that $X^n=X_n$ and $Y^n=Y_n$ .
	Indeed, the order of $X$ is $p$ and the order of $Y$ is $q-1$.
	Straightforward calculations show that the centralizer of $Y$ in $\spl_3(q)$ is 
	$$\left \{\begin{bmatrix}
		a & b & b \\
		c & d & d-e \\
		0 & 0 & e 
		\end{bmatrix} \ \middle | \ e\in \mathbb{F}_q^*, \det \begin{bmatrix}
		a & b  \\
		c & d 
		\end{bmatrix} = e^{-1}\right \}$$
	and the centralizer of $X$ in $\spl_3(q)$ is 
	$$\left \{\begin{bmatrix}
		a & b & c \\
		0 & d & d-a^{-2} \\
		0 & a-d & a+a^{-2}-d 
		\end{bmatrix} \ \middle | \ a\in \mathbb{F}_q^*, b,c,d \in \mathbb{F}_q\right \}.$$
	Let $$M_Y:=\begin{bmatrix}
		1 & 0 & 0 \\
		1 & 1 & 0 \\
		0 & 0 & 1 
	\end{bmatrix}\in C_{\spl_3(q)}(Y), 
		M_X:=\begin{bmatrix}
		1 & 1 & -1 \\
		0 & 0 & -1 \\
		0 & 1 & 2 
	\end{bmatrix}\in C_{\spl_3(q)}(X).$$
	We have $$M_Y X M_Y^{-1}:=\begin{bmatrix}
		0 & 1 & 1 \\
		-1 & 2 & 1 \\
		0 & 0 & 1 
	\end{bmatrix},
	M_X Y M_X^{-1}:=\begin{bmatrix}
		y & -2(y-y^{-2}) & 0 \\
		0 & y^{-2} & 0 \\
		0 & y-y^{-2} & y 
	\end{bmatrix}.$$

	If $q\neq 2,4$ then $M_Y$ does not normalize $\langle X \rangle$ and $M_X$ does not normalize $\langle Y \rangle$. This still holds modulo any subgroup $S$ of scalar matrices. In particular, Lemma~\ref{lem:conjug_cyclic_property_sufficient} applies to $XS$ and $YS$ in $\spl_3(q)/S$.
\end{proof}

Since the lemma does not deal with the cases $q=2$ and $q=4$, we are left to deal with $\spl_3(2)$ and $\spl_3(4)$ separately.
\begin{lem}
	$\spl_3(2)$ is power-chordal. $\spl_3(4)$ is not power-chordal.
\end{lem}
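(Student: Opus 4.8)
The plan is to treat the two groups separately, in both cases exploiting the sufficient conditions already developed. For $\spl_3(2)\cong\psl_2(7)$, the group has order $168=2^3\cdot 3\cdot 7$, and its element orders are $1,2,3,4,7$; in particular no element has order divisible by $p^2q^2$ or by three distinct primes, so Lemma~\ref{lem:char_prop_C4} and Lemma~\ref{lem:sufficient_condition_chordal} are applicable. I would verify the centralizer hypothesis of Lemma~\ref{lem:sufficient_condition_chordal}: the centralizer of an involution is the Sylow $2$-subgroup (a dihedral group of order $8$, hence a $2$-group), the centralizer of an element of order $3$ is cyclic of order $3$, and the centralizer of an element of order $7$ is cyclic of order $7$; each of these is a $p$-group (for the relevant prime) or of the form $C_{q^n}\rtimes P$ trivially. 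Hence $\spl_3(2)$ is power-chordal. These centralizer facts are standard for $\psl_2(7)$ and can be read off from the \textsc{ATLAS}~\cite{atlas}, or checked directly.

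For $\spl_3(4)$ I would instead produce an explicit long cycle, mirroring the proof of Lemma~\ref{lem:sl3q} but in the excluded case $q=4$. The cleanest route is to find elements $x$ and $y$ of coprime orders satisfying the hypotheses of Lemma~\ref{lem:conjug_cyclic_property_sufficient}: commuting elements such that neither $\langle x\rangle$ is normal in $C_G(y)$ nor $\langle y\rangle$ normal in $C_G(x)$. The construction in Lemma~\ref{lem:sl3q} used $Y$ of order $q-1$, which for $q=4$ is only $3$ — a small group, and indeed the obstruction there was precisely that $M_Y$ may normalize $\langle X\rangle$ and $M_X$ may normalize $\langle Y\rangle$ for small $q$. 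So for $\spl_3(4)$ I would use a different pair: take $x$ to be an element of order $3$ lying in a torus and $y$ an element of order coprime to $3$ — for instance a noncentral element whose image in $\psl_3(4)$ has order $7$ (note $7\mid |\psl_3(4)|=20160=2^6\cdot3^2\cdot5\cdot7$) — or, more straightforwardly, exhibit concrete matrices in $\spl_3(4)$ realizing the configuration. Alternatively, one could directly display an induced $4$-cycle using Lemma~\ref{lem:char_prop_C4}: since $\spl_3(4)$ contains elements of order $4$ (transvection-times-scalar type elements), one finds $g,h$ of order $4$ with $g^2=h^2$ but $\langle g\rangle\neq\langle h\rangle$ and with $C_G(\langle g,h\rangle)$ of even order times an element of odd prime order; this is easy to arrange inside a suitable subgroup such as $\spl_2(4)\cong\alt(5)$ embedded in a Levi factor, together with the center. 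The non-chordality of $\spl_3(4)$ is also consistent with, and can be cross-checked against, the statement in Theorem~\ref{thm:main} that $\psl_3(4)$ \emph{is} power-chordal while $\spl_3(4)$ is its $3$-fold cover.

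I expect the main obstacle to be the $\spl_3(4)$ case: one must carefully pick elements (or matrices) avoiding the small-group degeneracies that caused Lemma~\ref{lem:sl3q} to fail at $q=4$, and then check — ideally by a short explicit computation, or by invoking \textsc{GAP}~\cite{GAP4} as the authors do elsewhere in the paper for $\alt(6)$ and $\alt(7)$ — that the chosen pair genuinely satisfies the normalizer conditions of Lemma~\ref{lem:conjug_cyclic_property_sufficient}, or that the chosen $4$-tuple genuinely induces a $4$-cycle. The $\spl_3(2)$ direction, by contrast, is routine once the centralizer structure of $\psl_2(7)$ is recalled. A reasonable compromise, matching the paper's style, is to give the conceptual argument for $\spl_3(2)$ in full and to record the non-chordality of $\spl_3(4)$ either via an explicit witness pair or with the remark that it was verified computationally.
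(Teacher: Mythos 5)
Your treatment of $\spl_3(2)$ is fine: since all element orders of $\psl_2(7)$ are $1,2,3,4,7$, the centralizer check you describe goes through (the paper is even shorter, simply noting that $\psl_2(7)$ is an $\mathrm{EPPO}$-group and hence power-chordal), so that half is only a slightly longer route to the same conclusion.

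The $\spl_3(4)$ half has a genuine gap: your overall second strategy (two distinct copies of $C_4$ meeting in a $C_2$, with the central $C_3$ supplying the third condition of Lemma~\ref{lem:char_prop_C4}) is exactly the paper's argument, but none of the concrete witnesses you sketch actually exist. First, $\spl_2(4)\cong\alt(5)$ has element orders $1,2,3,5$ only, so you cannot locate the elements $g,h$ of order $4$ inside a Levi subgroup $\spl_2(4)$; likewise ``transvection-times-scalar'' elements have order $2$ or $6$ in characteristic $2$ (a transvection squares to the identity), not order $4$. The order-$4$ elements of $\spl_3(4)$ are the regular unipotent elements, and the paper's proof consists precisely of writing down two such upper unitriangular matrices that square to the same transvection while generating distinct $\langle g\rangle\neq\langle h\rangle$; the central $C_3$ of scalars then gives the odd prime dividing $|C_G(\langle g,h\rangle)|$. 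Second, your proposed fallback via Lemma~\ref{lem:conjug_cyclic_property_sufficient} with an element $x$ of order $3$ and an element $y$ of order $7$ cannot work: an order-$7$ element acts irreducibly on $\mathbb{F}_4^3$, so by Schur's lemma its centralizer in $\spl_3(4)$ is a cyclic (Singer-type) torus of order $21$, in which every subgroup is normal; hence $\langle x\rangle\trianglelefteq C_G(y)$ for any commuting $x$, violating the hypothesis of that lemma. So as written, the $\spl_3(4)$ direction rests either on witnesses that do not exist or on an unperformed computation; to close the gap you should exhibit the two regular unipotent order-$4$ matrices (as the paper does) or genuinely carry out the computational check rather than only announcing it.
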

\begin{proof}
	Recall that $\spl_3(2)\cong \psl_3(2)\cong \psl_2(7)$, which is an $\mathrm{EPPO}$-group and hence power-chordal. 
	We have $Z(\spl_3(4))\cong C_3$, so $\spl_3(4)$ is not power-chordal if we can find two distinct copies of $C_4$ that intersect in a $C_2$ (see Lemma~\ref{lem:char_prop_C4}). Let us write $\mathbb{F}_4=\mathbb{F}_2[\omega]$, then we can take
	$\begin{bmatrix}
		1 & w & 0 \\
		0 & 1 & 1 \\
		0 & 0 & 1 
	\end{bmatrix}\text{ and }
	\begin{bmatrix}
		1 & w & 1 \\
		0 & 1 & 1 \\
		0 & 0 & 1 
	\end{bmatrix}$ as suitable generators.
\end{proof}

\section{Simple groups with chordal power graph}\label{sec:simples}
We successively consider the non-abelian families from the classification of finite simple groups as given in Section~\ref{sec:prelims_simple_grps}.

\subsection{Symmetric \& alternating groups}
We find long induced cycles in symmetric and alternating groups for larger degrees.
\begin{lem}\label{lem:chordality_an_sn}
	The power graph of $\sym(n)$ is chordal if and only if $n\leq 5$. The power graph of $\alt(n)$ is chordal if and only if $n\leq 7$.
\end{lem}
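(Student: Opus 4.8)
The plan is to handle the positive and negative directions separately, using the tools developed above. For the positive direction, recall that $\sym(5)$ and $\alt(7)$ have the property that all element orders are products of at most two primes, each to the first power, except for the prime powers $4$ (in $\sym(5)$, $\alt(6)$, $\alt(7)$) — so we cannot simply invoke the EPPO criterion. Instead I would verify that $\alt(5)$ is EPPO (element orders $1,2,3,5$), hence power-chordal by Lemma~\ref{lem:sufficient_condition_chordal}; for $\sym(4)$, $\alt(6)$, $\alt(7)$, $\sym(5)$ I would check the centralizer condition of Lemma~\ref{lem:sufficient_condition_chordal} directly: for each element $x$ of prime order $p$, the centralizer $C_G(x)$ should be a $p$-group or of the form $C_{q^n}\rtimes P$ with $P$ a $p$-group. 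This is a finite check: in $\sym(5)$ the centralizer of a transposition is $C_2\times\sym(3)$ — which is \emph{not} of the required form — so this route fails, and instead I would cite the computer verification (as the paper does elsewhere with \textsc{GAP}) or argue by hand that no power-reduced induced cycle of length $\geq 4$ exists, using Lemma~\ref{lem:assume_cycle_pow_red} and Lemma~\ref{lem:char_prop_C4} (the only elements of order divisible by $p^2$ in $\sym(5)$ have order $4$, and one checks the two cyclic $C_4$'s sharing a $C_2$ have centralizer a $2$-group, excluding $4$-cycles; longer cycles need in-vertices of three coprime prime orders among centralizing elements, impossible here).

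For the negative direction, the main work is exhibiting long induced cycles in $\sym(6)$ and $\alt(8)$ (these then propagate upward since $\sym(n)\leq\sym(n+1)$ and $\alt(n)\leq\alt(n+1)$ are induced-power-subgraph embeddings, and an induced cycle in a subgroup is induced in the whole group). The cleanest approach is via Lemma~\ref{lem:conjug_cyclic_property_sufficient}: find commuting elements $x,y$ of coprime orders in the group such that neither $\langle x\rangle\trianglelefteq C_G(y)$ nor $\langle y\rangle\trianglelefteq C_G(x)$. In $\sym(6)$, take $x$ a product of two disjoint transpositions (order $2$) and $y$ a $3$-cycle on the remaining... — actually in $\sym(6)$ one can take $x=(1\,2)(3\,4)$ and $y=(5\,6)\cdot(\text{a }3\text{-cycle})$ won't have coprime order, so more carefully: take $x$ of order $3$ supported on $\{1,2,3\}$ and $y$ of order $2$ supported on $\{4,5\}$; then $C_G(x)\supseteq\langle(1\,2\,3)\rangle\times\sym(\{4,5,6\})$, and conjugating $x$ by an element of $\sym(\{4,5,6\})$ won't move $x$ — I need $y$ whose centralizer fails to normalize $\langle x\rangle$, so instead pick $x$ and $y$ so that the centralizer of one is large enough to move the other by conjugation to a non-power. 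Concretely in $\sym(6)$: $x=(1\,2)(3\,4)$ has $C(x)$ containing $(5\,6)$ and also $(1\,3)(2\,4)$, permuting the two transpositions; take $y=(5\,6)$ (order $2$, not coprime — bad). The fix is to use $\sym(6)$'s element of order $5$ or to go to $\alt(8)$ where one has an element $x$ of order $3$ and $y$ of order $2$ with $C(x)\cong (C_3\times\alt(5))$-type structure acting nontrivially. I would settle the exact witnesses by a short \textsc{GAP} computation and record them explicitly.

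The \textbf{main obstacle} is the positive direction for $\sym(5)$, $\alt(6)$, $\alt(7)$: the clean centralizer criterion of Lemma~\ref{lem:sufficient_condition_chordal} does \emph{not} apply (centralizers like $C_2\times\sym(3)$ or the centralizer of an involution in $\alt(6)$ are not of the permitted shape), so chordality must be established by a direct, essentially exhaustive argument ruling out induced cycles of length $4$ (via Lemma~\ref{lem:char_prop_C4}: analyze all pairs of cyclic $p$-subgroups of non-prime order sharing a subgroup of index $p$, and check their joint centralizer has no element of a new prime order) and then observing via Lemma~\ref{lem:assume_cycle_pow_red} and Lemma~\ref{lem:primes_alternate} that a power-reduced cycle of length $\geq 6$ would force an element centralizing two cyclic subgroups of distinct coprime prime orders, which one checks does not occur. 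I anticipate delegating the finite verification to \textsc{GAP}, as the paper does for the analogous $C_p\times\alt(6)$ claim, while giving the structural reason in prose. For $n\geq 6$ (resp. $n\geq 8$) non-chordality then follows immediately from the subgroup embedding together with the long cycle just constructed.
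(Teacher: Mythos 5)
Your proposal has a genuine gap in the negative direction, which is where the real content of the lemma lies. You never actually exhibit witnesses for non-chordality of $\sym(6)$ or $\alt(8)$: the explicit attempts are abandoned and the matter is deferred to an unexecuted \textsc{GAP} search. Worse, the choice you discard actually works, and you discard it because you misapply Lemma~\ref{lem:conjug_cyclic_property_sufficient}: what must be checked is whether $\langle y\rangle$ is normal in $C_G(x)$ and whether $\langle x\rangle$ is normal in $C_G(y)$, not whether elements of $C_G(x)$ move $x$. With $x=(1\,2\,3)$ and $y=(4\,5)$ in $\sym(6)$ one has $C_G(x)=\langle x\rangle\times\sym(\{4,5,6\})$ and $C_G(y)=\langle y\rangle\times\sym(\{1,2,3,6\})$; conjugating $y$ by $(4\,5\,6)$ gives $(5\,6)$ and conjugating $x$ by $(1\,6)$ gives $(6\,2\,3)$, so neither cyclic subgroup is normal and the lemma applies. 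For $\alt(8)$ one may take $x=(1\,2\,3)$, $y=(4\,5)(6\,7)$, using $(4\,5\,6)\in C_G(x)$ and $(2\,3\,8)\in C_G(y)$. With such witnesses written down, your route (which differs from the paper's: the paper instead constructs an explicit induced $12$-cycle in $\sym(6)$ from transpositions and $3$-cycles and multiplies the transpositions by $(7\,8)$ to land in $\alt(8)$) is perfectly viable, and the upward propagation by subgroup-closure is fine; but as written nothing is proved.

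In the positive direction two of your structural claims are wrong, although your fallback to a computer check matches the paper's own practice. First, the centralizer criterion of Lemma~\ref{lem:sufficient_condition_chordal} does \emph{not} fail for $\sym(5)$: the centralizer $C_2\times\sym(3)$ of a transposition is $C_3\rtimes(C_2\times C_2)$, exactly of the permitted shape, and the remaining prime-order centralizers ($D_8$, $C_6$, $C_5$) are also fine, so that lemma alone gives chordality of $\pow(\sym(5))$; likewise $\alt(6)$ is an $\mathrm{EPPO}$-group, so only $\alt(7)$ genuinely needs a separate argument (its $3$-cycles have centralizer $C_3\times\alt(4)$, which is not of the permitted form). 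Second, your stated reason for excluding long cycles is incorrect: a power-reduced $8$-cycle needs only two in-vertex primes, and elements centralizing two cyclic subgroups of distinct coprime prime orders certainly exist (any element of order $6$). The correct obstruction, and the one the paper uses for $\alt(7)$, is that in a power-reduced cycle of length at least $6$ some in-vertex of prime order $p$ must centralize two \emph{distinct} cyclic subgroups of the same prime order $q\neq p$ (for a $6$-cycle, three pairwise distinct primes occur among in-vertices by Lemma~\ref{lem:primes_alternate}); in $\sym(5)$ and $\alt(7)$ every involution centralizes at most one $C_3$, and $4$-cycles are excluded via Lemma~\ref{lem:char_prop_C4} because centralizers of elements of order $4$ are $2$-groups. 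Repairing these two points, together with explicit witnesses above, would make your argument complete.
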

\begin{proof}
	Let $a=(1,2)$, $b=(3,4,5)$, $c=(1,6)$, $d=(2,3,4)$, $e=(1,5)$, and $f=(3,4,6)$. Then $(a,ab,b,bc,c,cd,d,de,e,ef,f,fa,a)$ is an induced $12$-cycle in $\sym(6)$. If we multiply each transposition by $(7,8)$, we obtain that $\alt(8)$ is not power-chordal. Since power-chordal groups are closed under taking subgroups, we find that $\sym(n)$ is not power-chordal for $n> 5$ and $\alt(n)$ is not power-chordal for $n>7$. On the other hand, we checked that $\sym(5)$ and $\alt(7)$ are power-chordal via computations in GAP~\cite{GAP4} and SageMath~\cite{sagemath}.
In case of $\sym(5)$ we could explicitly check for chordality of the power graph. For $\alt(7)$, recall that centralizers
of elements of order $4$, $5$, and $7$ are no bigger than the cyclic subgroups generated by those elements. This implies
that $\pow(\alt(7))$ does not contain induced $4$-cycles (see Lemma~\ref{lem:char_prop_C4}) and if the power graph does contain
a long cycle, we might assume that it is power-reduced by Lemma~\ref{lem:assume_cycle_pow_red}. By exclusion of other element orders, the orders of in-vertices of such a cycle would have to alternate between $2$ and $3$. But each element of order $2$ centralizes a unique element of order $3$ in $\alt(7)$, so this is impossible. Finally, symmetric and alternating groups of smaller degree are power-chordal since the property is subgroup-closed.
\end{proof}

\subsection{Classical groups of Lie type}
The classical groups of Lie type are typically split into finer families of groups consisting of special linear, orthogonal, symplectic, or unitary groups. We follow the ordering given in Theorem~\ref{cfsg}.

The first groups we consider are projective special linear groups. We split the analysis into two parts: projective special linear groups $\psl_2(q)$ and $\psl_n(q)$ for $n\geq 3$. Depending on the parity of $q$, we give number theoretic conditions characterizing when the former groups have chordal power graphs.

\begin{lem}\label{lem:psl_2_necessary} Assume that $\pow(\psl_2(q))$ is chordal.
	\begin{enumerate}
		\item If $q$ is even, then $q-1$ and $q+1$ each have at most two prime divisors and at most one prime divisor with multiplicity greater than $1$. 
		\item If $q$ is odd, then $(q-1)/2$ and $(q+1)/2$ each have at most two prime divisors and at most one prime divisor with multiplicity greater than $1$.  
	\end{enumerate}
\end{lem}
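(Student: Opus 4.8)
The plan is to read the conditions directly off the cyclic maximal tori of $\psl_2(q)$. Write $q=p^f$ and $d=\gcd(q-1,2)$, so that $d=1$ when $q$ is even and $d=2$ when $q$ is odd. The only structural input needed is that $\psl_2(q)$ contains cyclic subgroups of orders $(q-1)/d$ and $(q+1)/d$; this is classical (see e.g.\ \cite{wilson_2009}) and can also be exhibited directly, since the split torus $\{\mathrm{diag}(a,a^{-1})\colon a\in\mathbb{F}_q^*\}\cong C_{q-1}$ and the non-split torus $\cong C_{q+1}$ of norm-$1$ elements of $\mathbb{F}_{q^2}^*$ both sit inside $\spl_2(q)$, and when $q$ is odd the central involution $-I=\mathrm{diag}(-1,-1)$ lies in both of them (note $(-1)^{q+1}=1$), so their images in $\psl_2(q)$ are cyclic of orders exactly $(q-1)/2$ and $(q+1)/2$.

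Next I would use the elementary observation that a positive integer $n$ fails to ``have at most two prime divisors and at most one prime divisor with multiplicity greater than $1$'' precisely when $n$ is divisible by $p_1p_2p_3$ for three distinct primes, or by $p_1^2p_2^2$ for two distinct primes. Suppose, aiming for a contradiction, that $(q-1)/d$ is divisible by $p_1p_2p_3$ or by $p_1^2p_2^2$ (the argument for $(q+1)/d$ is verbatim the same). Then the cyclic subgroup of $\psl_2(q)$ of order $(q-1)/d$ contains an element of order exactly $p_1p_2p_3$, respectively exactly $p_1^2p_2^2$. But no power-chordal group contains an element of order $p_1p_2p_3$ or $p_1^2p_2^2$ for pairwise distinct primes: this is the basic restriction on element orders recorded in Section~\ref{sec_direct_prod} (its $p^2q^2$ half also appears in Lemma~\ref{lem:sufficient_condition_chordal}); concretely, $C_{p_1p_2p_3}$ and $C_{p_1^2p_2^2}$ already have non-chordal power graphs, and chordality of power graphs is inherited by subgroups. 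This contradicts chordality of $\pow(\psl_2(q))$. Running the argument for both $(q-1)/d$ and $(q+1)/d$ and substituting the value of $d$ according to the parity of $q$ then yields the two cases.

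I do not expect a genuine obstacle, since the content is purely arithmetic once the torus structure is in hand; the one point deserving care is confirming that the center of order $d$ of $\spl_2(q)$ meets both the split and the non-split torus, so that $\psl_2(q)$ really does contain cyclic subgroups of orders $(q\pm1)/d$ and not merely of orders $(q\pm1)/(2d)$. The degenerate cases $q\in\{2,3\}$ need no special handling, as there $(q\pm1)/d$ is $1$ or a prime and the asserted conditions hold trivially; and the unipotent elements of $\psl_2(q)$, all of order $p$, are irrelevant here since $p$ is a single prime and does not divide $q^2-1$.
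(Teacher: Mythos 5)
Your proposal is correct and follows essentially the same route as the paper: it cites the cyclic subgroups of orders $(q-1)/\gcd(q-1,2)$ and $(q+1)/\gcd(q+1,2)$ in $\psl_2(q)$ (the paper's reference is~\cite[Section 3.10]{wilson_2009}) and then applies the standard restriction that power-chordal groups contain no elements of order $p_1p_2p_3$ or $p_1^2p_2^2$. The extra verification that the central involution of $\spl_2(q)$ lies in both tori is a welcome but inessential elaboration of the same argument.
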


\begin{proof}
If $q$ is even, then $\psl_2(q)$ contains cyclic subgroups of orders $q-1$ and $q+1$. If $q$ is odd, then $\psl_2(q)$ contains cyclic subgroups of orders $\frac{q-1}{2}$ and $\frac{q+1}{2}$ (cf.~\cite[Section 3.10]{wilson_2009}).
\end{proof}

In~\cite{cameron2021finite}, similar number theoretic conditions are derived for those groups isomorphic to $\psl_2(q)$ whose power graph is a cograph. By comparing the conditions, we note that the class of power-chordal groups isomorphic to $\psl_2(q)$ includes those with cograph power graphs. Moreover, giving precise solutions to the number theoretic restrictions in~\cite{cameron2021finite} is expected to be hard, so the same holds for Lemma~\ref{lem:psl_2_necessary}.

Yet, relative to the number theoretic conditions we obtain a complete characterization in the following lemma.
\begin{lem}
	Let $G:=\psl_2(q)$ and assume that $q$ fulfills the necessary conditions from the conclusion of Lemma~\ref{lem:psl_2_necessary}. Then $\pow(G)$ is chordal.
\end{lem}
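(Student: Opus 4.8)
The plan is to combine the structural description of the subgroups and centralizers of $G=\psl_2(q)$ with the sufficient condition for chordality given in Lemma~\ref{lem:sufficient_condition_chordal}. Recall that the subgroup structure of $\psl_2(q)$ is very well understood (see~\cite[Section 3.10]{wilson_2009}): apart from the Sylow $p$-subgroup (elementary abelian of order $q$, where $q=p^f$) and its normalizer (a Borel subgroup $C_p^f\rtimes C_{(q-1)/d}$ with $d=\gcd(q-1,2)$), every element lies in a cyclic ``torus'' subgroup of order $(q-1)/d$ or $(q+1)/d$, and these cyclic subgroups are self-centralizing. So the plan is: classify the elements $x$ of prime order $\ell$ and compute $C_G(x)$ in each case, then verify that $C_G(x)$ has the shape required by Lemma~\ref{lem:sufficient_condition_chordal}, namely that it is an $\ell$-group or of the form $C_{r^n}\rtimes P$ with $P$ an $\ell$-group and $r\neq \ell$.

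First I would dispose of the case $\ell=p$ (the defining characteristic): here $x$ is a transvection-type element, $C_G(x)$ is a $p$-group (the full Sylow $p$-subgroup when $p$ is odd, or of index $2$ inside it when $p=2$ and one is careful — in any case a $p$-group), so the first alternative of Lemma~\ref{lem:sufficient_condition_chordal} is satisfied. Next, for $\ell\neq p$, the element $x$ is semisimple and lies in a unique maximal cyclic torus $T$ of order $(q-1)/d$ or $(q+1)/d$; then $C_G(x)=C_G(T)=T$ unless $x$ is an involution, in which case $C_G(x)$ is a dihedral group of order $q-1$ or $q+1$ (for $q$ odd), i.e. $C_{(q\mp1)/2}\rtimes C_2$. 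Here is where the number-theoretic hypothesis enters: the hypothesis says $(q-1)/d$ and $(q+1)/d$ each have at most one prime divisor appearing with multiplicity $>1$ and at most two prime divisors total. For $x$ of odd prime order $\ell$, $C_G(x)=T$ is cyclic of order $(q\mp1)/d$; writing this order as $r_1^{a_1}r_2^{a_2}$ we must check $T$ has the form $C_{r^n}\rtimes P$ with $P$ an $\ell$-group. Since $\ell$ divides $|T|$, and $|T|$ has at most two prime divisors, $T\cong C_{r^n}\times C_{\ell^m}$ for the other prime $r$; the constraint ``$a_i>1$ for at most one $i$'' forces $m\leq 1$ or $n\leq 1$ — but only the case $m\le 1$, i.e. $C_{\ell^m}$ is just $C_\ell$ (a ``$p$-group'' of exponent $\ell$), is needed here, and if instead $n>1$ then $m=1$ anyway, so $T=C_{r^n}\rtimes C_\ell$ works. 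For involutions ($q$ odd), $C_G(x)$ is dihedral of order $q\mp1=d\cdot(q\mp1)/2$; since $d=2$ and $x$ has order $2$, this is $C_{(q\mp1)/2}\rtimes C_2$ where the $2$-part of $(q\mp1)/2$ is absorbed so that the cyclic part is $C_{r^n}$ (the odd part, necessarily a prime power by hypothesis since $(q\mp1)/2$ has at most one repeated prime and at most two primes, one of which may be $2$) times a $2$-group — this is exactly the allowed form. For $q$ even there are no involutions outside the unipotent elements, so this subcase does not arise.

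**The main obstacle** I anticipate is the careful bookkeeping in the involution centralizer case and making sure the order $(q\mp 1)/2$ (or $q\mp1$) really decomposes as (prime-power) $\times$ (2-group) under the stated hypothesis — one needs that if $(q\mp1)/2$ has two prime divisors then the odd one occurs to the first power only when $2$ occurs to a higher power, which is precisely the ``at most one prime with multiplicity $>1$'' clause, and then the cyclic Hall $2'$-subgroup $C_{r^n}$ is normal in the dihedral group with $2$-group complement. A secondary subtlety is confirming that cyclic torus subgroups in $\psl_2(q)$ really are self-centralizing and that there are no further exotic element orders (e.g. from the small-degree subgroups $A_4,S_4,A_5$ that occur as maximal subgroups): any such element still has prime order dividing $|G|$, hence either $p$ or a divisor of $(q\pm1)/d$, so it falls under one of the cases already treated, and its centralizer in $G$ (not merely in the small subgroup) is still the torus or unipotent/dihedral centralizer computed above. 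Once every prime-order element has been checked, Lemma~\ref{lem:sufficient_condition_chordal} immediately yields that $\pow(G)$ is chordal, noting that the hypothesis also rules out elements of order $\ell^2 r^2$ since $|G|=q(q-1)(q+1)/d$ and the torus orders are square-free up to one prime.
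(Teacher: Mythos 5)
Your proposal is correct and takes essentially the same route as the paper: both arguments reduce to verifying the centralizer hypothesis of Lemma~\ref{lem:sufficient_condition_chordal} for every prime-order element of $\psl_2(q)$, using the known structure of this group (unipotent centralizers are $p$-groups, the cyclic tori are self-centralizing, and involution centralizers are dihedral whose odd part is a prime power under the numerical hypothesis). The paper obtains the same centralizer shapes slightly more indirectly, by running through Dickson's subgroup list and using that $C_G(x)$ has non-trivial center, but the substance of the argument is identical to yours.
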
	
\begin{proof}
	The maximal cyclic subgroups of $\psl_2(q)$ have orders $q$, $q^-:=(q-1)/(2,q-1)$ and $q^+:=(q+1)/(2,q+1)$~\cite[Section 3.10]{wilson_2009}. In particular, under the necessary conditions, $\psl_2(q)$ does not contain elements of order $p^2p'^2$ or $pp'p''$ for distinct primes $p$, $p'$ and $p''$. Our aim is to use Lemma~\ref{lem:sufficient_condition_chordal} to show chordality of $\pow(\psl_2(q))$, assuming that the cyclic subgroups of order $q^-$ and $q^+$ are each power-chordal. 

To this end, consider an element $x\in G$ of prime order $p$. If $p$ divides $q$, then the centralizer of $x$ is a $q$-group by the list of maximal cyclic subgroups that $G$ contains. Assume that $p$ does not divide $q$. Going through the list of all subgroups of $\psl_2(q)$ (see~\cite{huppert}), 
subgroups of $G$ that are not of prime power order are either cyclic groups, or dihedral groups of order
$2m$, where $m$ divides $q^-$ or $q^+$, or they are isomorphic to $A_4$, $S_4$, $A_5$, $C_{q_0^d}\rtimes C_t$ where $q_0$ is the unique prime dividing $q$, or $\psl_2(q')$, where $q'$ divides $q$. Since $C_G(x)$ has $x$ in its center, $C_G(x)$ must either be a $p$-group, or cyclic (with at most two distinct prime divisors by our assumptions on $q$),
or dihedral of order $2m$. In the latter case, again using the fact that $Z(C_G(x))$ is non-trivial, we note that $p=2$ holds and $m$ must be even. Then $C_G(x)$ is either a $2$-group or a semi-direct product $C_{p'^d}\rtimes P$ with a $2$-group $P$ and a prime $p'\neq 2$. In any case, $C_G(x)$ fulfills the condition from Lemma~\ref{lem:sufficient_condition_chordal}, and since $x$ was chosen arbitrarily, $G$ is power-chordal.
\end{proof}

It remains to consider the case of dimension at least $3$, and here we show that only two small examples of special linear groups are power-chordal, namely $\psl_3(2)\cong\psl_2(7)$ and the $\mathrm{EPPO}$-group $\psl_3(4)$.

\begin{lem}\label{lem:psl_larger_3}
	Let $G:=\psl_n(q)$ with $n\geq 3$ and an arbitrary prime power $q$. Then $\pow(G)$ is chordal if and only if
	$G$ is isomorphic to $\psl_3(2)\cong\psl_2(7)$ or $\psl_3(4)$.
\end{lem}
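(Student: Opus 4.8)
The plan is to reduce to the case $n=3$ using the Levi-factor construction, then finish the two small cases by hand. First I would observe that $\psl_n(q)$ for $n\geq 3$ satisfies the hypotheses of Lemma~\ref{spl3_levi_factor} (it is untwisted), so it contains a subgroup isomorphic to $\spl_3(q)$ modulo a group of scalars. Since power-chordality is subgroup-closed, it suffices to show that a central quotient of $\spl_3(q)$ fails to be power-chordal whenever $q\notin\{2,4\}$. This is exactly the content of Lemma~\ref{lem:sl3q}: for $q\notin\{2,4\}$ the group $\spl_3(q)/S$ contains $XS$ and $YS$ of coprime orders to which Lemma~\ref{lem:conjug_cyclic_property_sufficient} applies, producing an induced cycle of length greater than $3$. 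Hence for $n\geq 3$ and $q\notin\{2,4\}$, $\psl_n(q)$ is not power-chordal.

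Next I would handle $q\in\{2,4\}$. For $q=4$ and $n=3$, the previous lemma already records that $\spl_3(4)$ is not power-chordal; but $\psl_3(4)=\spl_3(4)/Z$ with $|Z|=3$, and $\psl_3(4)$ turns out to be an $\mathrm{EPPO}$-group (its element orders are $1,2,3,4,5,7$), hence power-chordal by Lemma~\ref{lem:sufficient_condition_chordal}/the $\mathrm{EPPO}$ remark. So $\psl_3(4)$ genuinely appears in the list. For $n\geq 4$ and $q=4$, I would again use Lemma~\ref{spl3_levi_factor}: $\psl_n(4)$ contains a central quotient of $\spl_3(4)$. One must check that this quotient is the full $\psl_3(4)$ is not forced — rather, I would instead exhibit a larger non-chordal subgroup, e.g.\ note that $\psl_4(4)$ contains $\spl_3(4)$ itself (not merely a quotient), since the relevant Levi factor / parabolic subgroup contains $\gl_3(4)$ and hence $\spl_3(4)$, which has a center of order $3$ and two distinct copies of $C_4$ meeting in a $C_2$, giving an induced $4$-cycle by Lemma~\ref{lem:char_prop_C4}. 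For $q=2$ and $n=3$ we have $\psl_3(2)\cong\psl_2(7)$, an $\mathrm{EPPO}$-group, so it is power-chordal. For $q=2$ and $n\geq 4$, I would use that $\psl_n(2)=\spl_n(2)$ contains $\spl_{n-1}(2)\supseteq\spl_3(2)$ but this is $\mathrm{EPPO}$, so a different witness is needed: here I would exhibit $\spl_4(2)\cong\alt(8)$, which is not power-chordal by Lemma~\ref{lem:chordality_an_sn}. Since $\psl_n(2)$ for $n\geq 4$ contains $\spl_4(2)$ (as a block-diagonal subgroup), it is not power-chordal either.

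Assembling these pieces: the only $\psl_n(q)$ with $n\geq 3$ that are power-chordal are $\psl_3(2)$ and $\psl_3(4)$, which is the claim. The main obstacle I anticipate is the bookkeeping for $q\in\{2,4\}$ and $n\geq 4$, where the generic argument via Lemma~\ref{lem:sl3q} does not immediately apply because the relevant small subgroup ($\spl_3(2)$ or a quotient of $\spl_3(4)$) can itself be power-chordal; one has to locate a slightly larger non-chordal subgroup ($\spl_4(2)\cong\alt(8)$, or $\spl_3(4)$ inside $\psl_4(4)$ with its $C_4$-intersection pattern) and verify it embeds. Everything else is a direct appeal to earlier lemmas, with the coprime-centralizer criterion of Lemma~\ref{lem:conjug_cyclic_property_sufficient} doing the heavy lifting in the generic case.
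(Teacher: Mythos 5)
Your proposal is correct, but on the central case $n=3$ it takes a genuinely different route from the paper. The paper does not use Lemma~\ref{lem:sl3q} for $\psl_3(q)$: it instead extracts abelian subgroups $C_{q+1}$ and $C_{q-1}\times C_{(q-1)/(3,q-1)}$, applies Theorem~\ref{thm:pocho_nilpot} and Catalan's theorem to reduce to finitely many values of $q$, and settles those by computer checks via Lemma~\ref{lem:conjug_cyclic_property_sufficient}, which is how $\psl_3(2)$ and $\psl_3(4)$ emerge as the exceptions. You apply Lemma~\ref{lem:sl3q} directly to $\psl_3(q)=\spl_3(q)/Z$ for all $q\notin\{2,4\}$; this is legitimate, since the proof of that lemma explicitly asserts that Lemma~\ref{lem:conjug_cyclic_property_sufficient} applies to $XS$ and $YS$ modulo any scalar subgroup $S$, and the paper itself uses the lemma in this quotient form for $\mathrm{PSp}_{2n}(q)$, the orthogonal groups and $G_2(q)$ --- just be aware that you are leaning on the ``modulo scalars'' claim in the proof rather than on the literal statement, and that your route trades the paper's number-theoretic analysis for a uniform, computation-free argument. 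Two bookkeeping remarks: Lemma~\ref{spl3_levi_factor} is indexed by Lie rank, so it does not literally cover $\psl_3(q)=A_2(q)$ (rank $2$); you do not need it there, because $\psl_3(q)$ is itself a central quotient of $\spl_3(q)$, and for $n\geq 4$ the block-diagonal copy of $\spl_3(q)$ meets the scalars of $\spl_n(q)$ trivially, so $\psl_n(q)$ contains $\spl_3(q)$ itself --- which is exactly what the paper uses for $n>3$ and which also removes your concern about ``which quotient'' arises when $q=4$. For the residual cases your witnesses are fine and essentially match the paper's: $\spl_4(2)\cong\alt(8)$ (non-chordal by Lemma~\ref{lem:chordality_an_sn}) for $q=2$, $n\geq 4$ is the paper's choice, while for $q=4$, $n\geq 4$ you use $\spl_3(4)$ with two copies of $C_4$ meeting in a $C_2$ and the central $C_3$ (the paper's separate lemma on $\spl_3(4)$, via Lemma~\ref{lem:char_prop_C4}), where the paper's proof instead exhibits $A_5\times A_5\leq\psl_4(4)$; both work, and your EPPO verifications for $\psl_3(2)\cong\psl_2(7)$ and $\psl_3(4)$ agree with the paper.
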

\begin{proof}
	We first treat the case $n=3$ and then we treat the case $n>3$ in one go.
	\paragraph{Case $n=3$:} By~\cite[Section 3.10]{wilson_2009}, $G$ contains subgroups isomorphic to $C_{q+1}$ and $C_{q-1}\times C_{(q-1)/(3,q-1)}$. So if $q$ is odd, then either $q-1$ equals $6$ (hence $q=7$), or a $2$-power, or $G$ is not power-chordal according to Theorem~\ref{thm:pocho_nilpot}. In the $2$-power case, $(q+1)=2^a+2=2(2^{a-1}+1)$ for some exponent $a$ and $q=2^a+1$. Then the pair $(q-1,q)$ is a solution to Catalan's conjecture~\cite{mihailescu}, so either $q=9$ or $q$ is prime.
	 If $q$ is prime, then $a$ must be a power of $2$ as well, so $a-1$ is odd and then $q+1$ is divisible by $6$. If $C_{q+1}$ is power-chordal, then $q+1$ is two times a $3$-power because if $q-1=2^a$ and $q+1\geq 6$ then $2$ divides $q+1$ exactly once. So $(q-1)=2(3^b-1)$ for some exponent $b$ and $3^b-1$ must be a $2$-power. By Catalan's conjecture~\cite{mihailescu} $q=17$ or $q=5$. In these cases and furthermore for $q=7$ and $q=9$, we  check with GAP~\cite{GAP4} that none of these groups are power-chordal by using Lemma~\ref{lem:conjug_cyclic_property_sufficient}.

	  If $q$ is even, then $C_{q-1}\times C_{(q-1)/(3,q-1)}$ is chordal only if $q-1$ is a prime power or three times a prime (Theorem~\ref{thm:pocho_nilpot}). Furthermore, $G$ also contains subgroups of the form $C_{q+1}\times C_{(q-1)/(3,q-1)}$ (there is a cyclic group of order $q^2-1$ in $\mathrm{GL}_2(q)$ and for even $q$ we have $\gcd(q-1,q+1)=1$). Assume for now that $q\neq 4$. Thus, $q+1$ has to be a prime power (we always have $\gcd(q-1,q+1)=1$ for even $q$). Again due to Catalan's conjecture~\cite{mihailescu}, if $q$ is not $2$ or $8$, then $q+1$ is even forced to be prime. But if $q+1=2^a+1$ is prime, then $a$ is a power of $2$ and $q-1=2^a-1$ is divisible by the three distinct primes dividing $2^8-1$ whenever $a\geq 8$. So for $a\geq 8$, we obtain a contradiction since $q-1$ can have at most two distinct prime divisors. The remaining cases are $q\in\{2,4,8,16\}$. We checked them for power-chordality via GAP~\cite{GAP4} by using Lemma~\ref{lem:conjug_cyclic_property_sufficient}. The power-chordal exceptions are $\psl_3(2)\cong\psl_2(7)$, and $\psl_3(4)$, which is an $\mathrm{EPPO}$-group.
	 
	  \paragraph{Case $n>3$:} If $n>3$ there is a subgroup of $\psl_n(q)$ isomorphic to $\spl_3(q)$, which is not power-chordal if $q\neq 2,4$ (Lemma~\ref{lem:sl3q}). But $\psl_4(2)\cong A_8$ which is not power-chordal by Lemma~\ref{lem:chordality_an_sn}, and $\psl_4(4)$ contains $A_5\times A_5$ which is not power-chordal: we can easily build cycles in $\pow(A_5\times A_5)$ by considering distinct cyclic subgroups of order $3$ in one copy of $A_5$ and distinct subgroups of order $5$ in the other copy, for example.
\end{proof}

Next, we examine projective symplectic groups. These groups exist in even dimension at least $2$, but we need only consider dimension at least $4$ since in dimension $2$, the groups coincide with projective special linear groups (cf.~\cite{wilson_2009}). We prove that none of these groups have chordal power graphs.

\begin{lem}\label{lem:psp}
	Let $G:=\mathrm{PSp}_{2n}(q)$, where $n\geq 2$. Then $G$ is not power-chordal.
\end{lem}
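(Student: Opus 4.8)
The plan is to exhibit, inside every $\mathrm{PSp}_{2n}(q)$ with $n\geq 2$, either an element whose order is divisible by $p^2q'^2$ or $pq'r$ for distinct primes (immediately ruling out power-chordality by the lemma collecting general order restrictions), or else a configuration satisfying the hypotheses of Lemma~\ref{lem:conjug_cyclic_property_sufficient}, i.e.\ a pair of commuting elements of coprime orders, neither of which generates a normal cyclic subgroup in the centralizer of the other. A clean way to organise this is to descend to a small fixed case: $\mathrm{Sp}_4(q)$ sits inside $\mathrm{Sp}_{2n}(q)$ as the stabiliser of an orthogonal splitting $V = V_0 \perp V_1$ with $\dim V_0 = 4$, hence the corresponding section embeds (after factoring scalars) into $\mathrm{PSp}_{2n}(q)$. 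Since power-chordality is subgroup-closed, it therefore suffices to show that no $\mathrm{PSp}_4(q)$ is power-chordal, and one may even pass further to $\mathrm{Sp}_4(q)$ itself, as its power graph contains that of its quotient $\mathrm{PSp}_4(q)$ as an induced subgraph only up to the central $C_{(2,q-1)}$; here it is cleaner to argue directly in $\mathrm{PSp}_4(q)$.

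First I would handle the generic case using structural facts about $\mathrm{Sp}_4(q)$ from \cite{wilson_2009}. The group $\mathrm{Sp}_4(q)$ contains maximal tori of orders $(q-1)^2$, $(q+1)^2$, $q^2-1$, $q^2+1$, and it contains a subgroup $\mathrm{Sp}_2(q)\times \mathrm{Sp}_2(q) \cong \mathrm{SL}_2(q)\times \mathrm{SL}_2(q)$ coming from an orthogonal decomposition of the symplectic space into two hyperbolic planes. Taking two commuting unipotent elements, one nontrivial in each factor, of orders $p$ where $q=p^m$, together with a semisimple element in the same double factor, one quickly finds either forbidden element orders (e.g.\ an element of order $p^2\cdot\ell$ when the relevant $\mathrm{SL}_2(q)$ has an element of order $p$ times something, or an element of order $\ell\ell'\ell''$ when $q-1$ or $q+1$ has three prime divisors) or two distinct cyclic subgroups of the same prime order $\ell$ inside $C_G(y)$ for $y$ a unipotent element, the latter yielding the coprime-pair configuration for Lemma~\ref{lem:conjug_cyclic_property_sufficient}. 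Concretely, inside $\mathrm{SL}_2(q)\times \mathrm{SL}_2(q)$ one has a semisimple diagonal element $y$ of order dividing $q-1$ whose centraliser is a split torus $\cong C_{q-1}\times C_{q-1}$, while a unipotent $x=(u,1)$ commutes with $y$ and the centraliser of $x$ contains the whole second $\mathrm{SL}_2(q)$, which has many distinct cyclic subgroups of any order dividing $q\pm1$ — so $\langle y\rangle$ is far from normal in $C_G(x)$; symmetrically $\langle x\rangle$ is not normal in $C_G(y)$ once $q\neq 2,3$ since $C_G(y)$ acts on the unipotent subgroup without fixing a line. This disposes of all but finitely many $q$.

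The remaining step is the small cases $q\in\{2,3\}$ (and possibly $q=4$ depending on where the generic argument breaks). Here I would simply invoke a computer check in \textsc{GAP}~\cite{GAP4}: $\mathrm{PSp}_4(2)\cong \sym(6)$ is not power-chordal by Lemma~\ref{lem:chordality_an_sn}, and $\mathrm{PSp}_4(3)$ can be verified non-power-chordal directly, either by exhibiting an explicit long induced cycle or by checking that some element of prime order has a centraliser violating the sufficient condition of Lemma~\ref{lem:sufficient_condition_chordal} (for instance, $\mathrm{PSp}_4(3)$ contains elements of order $9$ and a commuting element of order coprime to $3$, forcing an element of order $9\cdot\ell$ or a $4$-cycle). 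I expect the main obstacle to be organising the generic argument uniformly in $q$ and $n$ without drowning in casework on the prime factorisations of $q-1$, $q+1$: the cleanest route is to reduce once and for all to $\mathrm{PSp}_4(q)$, then to the $\mathrm{SL}_2(q)^2$ subgroup, and to phrase the contradiction entirely through Lemma~\ref{lem:conjug_cyclic_property_sufficient} applied to a (unipotent, semisimple) commuting pair, falling back on Theorem~\ref{thm:pocho_nilpot} only for the order-divisibility obstructions and on \textsc{GAP} for $q\leq 3$.
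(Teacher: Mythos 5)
Your overall strategy is viable and genuinely different from the paper's, but as written it has two concrete problems. The central one is the configuration itself: you take $y$ to be a regular semisimple element ``whose centraliser is a split torus $\cong C_{q-1}\times C_{q-1}$'' and simultaneously ask that the unipotent $x=(u,1)$ commute with $y$ --- these are incompatible, since a torus contains no nontrivial unipotent elements, so the pair you describe does not exist. The repair is easy but changes the centralizers you argue with: take $y=(1,d)$ supported only in the second $\mathrm{SL}_2(q)$, of order $q-1$; then $C(y)\cong \mathrm{SL}_2(q)\times C_{q-1}$ contains $\langle x\rangle$ non-normally, and $C(x)$ contains the full second factor $1\times\mathrm{SL}_2(q)$, in which $\langle y\rangle$ is non-normal provided $q-1>2$ (it must not be the center), so Lemma~\ref{lem:conjug_cyclic_property_sufficient} applies for all $q\geq 4$ and $q=2,3$ go to the computer or to $S_6\leq\mathrm{PSp}_4(q)$. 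The second problem is the reduction for $n>2$: the stabiliser of the orthogonal splitting gives a subgroup of $\mathrm{PSp}_{2n}(q)$ isomorphic to $\mathrm{Sp}_4(q)$ (it meets the center $\{\pm I_{2n}\}$ trivially), \emph{not} to $\mathrm{PSp}_4(q)$, and power-chordality is subgroup-closed but not quotient-closed, so ``arguing directly in $\mathrm{PSp}_4(q)$'' does not cover $n>2$ with $q$ odd; likewise your remark that the power graph of $\mathrm{Sp}_4(q)$ contains that of its quotient as an induced subgraph is not a valid principle. Fortunately your configuration lives inside $\mathrm{SL}_2(q)\times\mathrm{SL}_2(q)\leq\mathrm{Sp}_4(q)$, so you can run it in $\mathrm{Sp}_4(q)$ for $n>2$ and check separately (as above) that it survives the central quotient when $n=2$.

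For comparison, the paper argues quite differently: for $n=2$ it uses the tori $C_{q\pm1}\times C_{q\pm1}$ (resp.\ $C_{q\pm1}\times C_{(q\pm1)/2}$ for odd $q$) together with Theorem~\ref{thm:pocho_nilpot} and Catalan's conjecture to reduce to finitely many $q$, which are then killed by explicit non-power-chordal subgroups ($S_6$, $S_3\times S_5$, $\psl_2(8)\times D_{18}$) or GAP; for $n>2$ it invokes the $\mathrm{SL}_3(q)$-section of Lemma~\ref{spl3_levi_factor} and Lemma~\ref{lem:sl3q}, with $q\in\{2,4\}$ handled via $\mathrm{Sp}_6(q)$ and $S_6$. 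Once your two gaps are repaired, your route is arguably more uniform in $q$ and $n$ and avoids the number-theoretic casework, at the price of a careful verification of the centralizer structure in $\mathrm{Sp}_4(q)$ and of the passage to the central quotient.
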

\begin{proof}
	We treat the two cases $n=2$ and $n>2$ separately.
	\paragraph{Case $n=2$:}
	Let $n=2$. If $q$ is even and not equal to $2$, then there exist subgroups of the form $C_{q\pm 1}\times C_{q\pm 1}$ (cf.~\cite[Section 3.10]{wilson_2009}). Then both $q-1$ and $q+1$ must be prime powers, otherwise there are nilpotent non-chordal subgroups (Theorem~\ref{thm:pocho_nilpot}). By Catalan's conjecture~\cite{mihailescu}, we have $q\in\{2,4,8\}$ and we check these cases using GAP~\cite{GAP4}.
	If $q=8$, then $G$ contains a subgroup isomorphic to $\psl_2(8)\times D_{18}$, which is not power-chordal: $D_{18}$ contains distinct subgroups of order $2$ and $\psl_2(8)$ contains distinct subgroups of order $7$, giving rise to an $8$-cycle in the power graph.	 
	 If $q=4$ then $G$ contains a subgroup isomorphic to $S_6$, which is not power-chordal. If $q=2$ then $G$ itself is isomorphic to $S_6$.
	  
	  If $q$ is odd, then there exist subgroups of the form $C_{q\pm 1}\times C_{(q\pm 1)/2}$ (cf.~\cite[Section 3.10]{wilson_2009}). In this case, $(q-1)/2$ and $(q+1)/2$ now have to be prime powers, so one of them is a power of $2$ since one of them is divisible by $2$. Then one of $q-1$ and $q+1$ must be equal to $4$ or otherwise there is a subgroup of the form $C_4\times C_2\times C_p$ with an odd prime $p$. So the remaining cases are $q=3$ and $q=5$.
	  If $q=5$ then $G$ contains a maximal subgroup isomorphic to $S_3\times S_5$, which is not power-chordal.
	  If $q=3$ then $G$ contains a subgroup isomorphic to $S_6$, which is not power-chordal.	
	  
	  \paragraph{Case $n>2$:}
	  If $n>2$, then $\mathrm{PSp}_{2n}(q)$ contains subgroups isomorphic to $\spl_3(q)$ divided by a group of scalar matrices (cf. Lemma~\ref{spl3_levi_factor}), which are not power-chordal by Lemma~\ref{lem:sl3q} if $q\neq 2,4$. Moreover, $\mathrm{PSp}_6(4)$ contains $\mathrm{PSp}_{6}(2)$ and the latter contains $S_6$, which is not power-chordal according to Lemma~\ref{lem:chordality_an_sn}. For $n>3$, the group $\mathrm{PSp}_{2n}(q)=\mathrm{Sp}_{2n}(q)$ with $q\in \{2,4\}$ contains $\mathrm{PSp}_{6}(q)=\mathrm{Sp}_{6}(q)$ (see~\cite[Theorem 3.7]{wilson_2009}).
\end{proof}

Next in line are projective simple unitary groups and we consider them in dimension at least $3$, because like the projective symplectic groups, they coincide with projective special linear groups in smaller dimensions (cf.~\cite{wilson_2009}).

\begin{lem}\label{lem:psu}
	Let $G:=\psu_n(q)$, where $n\geq 3$. Then $G$ is power-chordal if and only if $n=3$ and $q=2$ hold, in which case $G$ is not simple.
\end{lem}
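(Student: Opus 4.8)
The plan is to mirror the structure of the $\psl_n$ and $\mathrm{PSp}_{2n}$ arguments, splitting into the base case $n=3$ and the reduction case $n>3$. For $n>3$ I would invoke Lemma~\ref{spl3_levi_factor}: the untwisted Dynkin diagram argument there explicitly covers $\negphantom{x}\phantom{x}^2A_{n-1}(q)=\psu_n(q)$ when $n>3$ (the statement is phrased for $\negphantom{x}\phantom{x}^2D_n$ but the underlying mechanism — a $\sigma$-fixed $A_2$ sub-diagram — works identically for $\negphantom{x}\phantom{x}^2A_{n-1}$, so I would either cite it directly or note this explicitly). Hence $\psu_n(q)$ for $n\geq 4$ contains a central quotient of $\spl_3(q)$, which by Lemma~\ref{lem:sl3q} is not power-chordal unless $q\in\{2,4\}$. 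The two leftover cases are then handled as before: $\psu_4(2)$ and $\psu_5(2)$ contain $\psu_4(2)\cong \mathrm{PSp}_4(3)$, which is not power-chordal by Lemma~\ref{lem:psp}; $\psu_4(4)$ and $\psu_5(4)$ contain $\psu_4(2)$ as well (reduction of the field), or alternatively one exhibits a subgroup of the form $A_5\times A_5$ or a large $\mathrm{PSp}$-subgroup as in the $\psl$ case. For $n>5$ the group contains $\psu_5(q)$ or $\psu_4(q)$ by the subgroup structure of unitary groups over subfields/sublattices (\cite[Section 3.6]{wilson_2009}), so subgroup-closure of power-chordality finishes the reduction case.

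For $n=3$, the key input is the list of maximal cyclic subgroup orders of $\psu_3(q)$. From \cite[Section 3.6]{wilson_2009} one has cyclic subgroups of orders $q+1$ (three times, from a maximal torus of type $C_{(q+1)^2}/C_{\gcd(3,q+1)}$, giving a subgroup $C_{q+1}\times C_{(q+1)/\gcd(3,q+1)}$), of order $(q^2-1)/\gcd(3,q+1)$, and of order $(q^2-q+1)/\gcd(3,q+1)$. So for chordality I would first extract number-theoretic restrictions via Theorem~\ref{thm:pocho_nilpot} applied to the nilpotent subgroup $C_{q+1}\times C_{(q+1)/d}$ with $d=\gcd(3,q+1)$: this forces $q+1$ to be a prime power, or $d=3$ and $q+1$ to be three times a prime power. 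Similarly $C_{(q^2-1)/d}$ being power-chordal restricts $q^2-1$. Combining the constraint on $q+1$ with the constraint on $q-1$ (which divides $(q^2-1)/d$) and invoking Catalan's conjecture (Mih\u{a}ilescu) to kill the genuine prime-power-vs-prime-power collisions, I expect to be left with a short finite list of $q$, something like $q\in\{2,3,4,5,7,8\}$. Each survivor is then checked in GAP, typically by producing a long cycle via Lemma~\ref{lem:conjug_cyclic_property_sufficient} (an element of order $p$ and one of order $q+1$ or $q-1$ with non-normalizing centralizer conjugates), except $\psu_3(2)$, which is solvable of order $72$ — one verifies directly it is power-chordal (it is $C_3^2\rtimes Q_8$, and the centralizer condition of Lemma~\ref{lem:sufficient_condition_chordal} can be checked by hand), and one notes it is not simple.

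The main obstacle I anticipate is getting the maximal cyclic subgroup orders of $\psu_3(q)$ exactly right — in particular tracking the factor $\gcd(3,q+1)$ through the three maximal tori and making sure the nilpotent subgroup $C_{q+1}\times C_{(q+1)/\gcd(3,q+1)}$ (rather than just a single cyclic $C_{q+1}$) is correctly identified, since it is that product, not a cyclic group, which yields the sharp restriction via Theorem~\ref{thm:pocho_nilpot}. A secondary nuisance is the Catalan bookkeeping: one must be careful that "$q+1$ a prime power and $q-1$ having few prime divisors" is combined correctly with the possibility that $q$ itself is not prime, exactly as in the $\psl_3(q)$ argument, so I would simply reuse that pattern. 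Once the candidate list is finite the GAP verifications are routine, and I would present them by citing \cite{GAP4} and naming the offending subgroups or the witness pair for Lemma~\ref{lem:conjug_cyclic_property_sufficient} in each case.
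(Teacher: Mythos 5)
Your plan has a genuine gap in the case $n=3$, $q$ even. You expect the nilpotent/cyclic subgroup restrictions (Theorem~\ref{thm:pocho_nilpot} applied to $C_{q+1}\times C_{(q+1)/\gcd(3,q+1)}$ and to the cyclic torus of order $(q^2-1)/\gcd(3,q+1)$), combined with Catalan, to leave only a short finite list of $q$ to hand to GAP. That is false when $q$ is even and $3\mid q+1$: there the constraints only force $q-1$ to be a prime (power) and $q+1$ to be $3$ times a prime (or a $3$-power), and these conditions are satisfied by infinitely many candidates as far as anyone can prove --- e.g.\ $q=32$ ($q-1=31$, $q+1=3\cdot 11$), $q=128$ ($127$, $3\cdot 43$), $q=8192$ ($8191$, $3\cdot 2731$), and Catalan gives nothing since no perfect power with exponent $\geq 2$ is involved. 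Moreover, unlike for $\psl_2(q)$ and $\sz(q)$, the classification in Theorem~\ref{thm:main} asserts unconditionally that no simple $\psu_n(q)$ is power-chordal, so you cannot retreat to a number-theoretic oracle here. The paper closes exactly this hole by a structural argument valid for all even $q>2$ with $3\mid q+1$: it exhibits a subgroup $C_{(q+1)/3}\times \psl_2(q)$ inside $\psu_3(q)$, takes $x$ of order $(q+1)/3$ with $C_G(x)=\langle x\rangle\times H$, $H\cong\psl_2(q)$, and an involution $t\in H$ with $C_G(t)=Q\rtimes\langle x\rangle$ (Suzuki), so that neither $\langle t\rangle$ nor $\langle x\rangle$ is normal in the other's centralizer; Lemma~\ref{lem:conjug_cyclic_property_sufficient} then yields a long induced cycle. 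Without an argument of this kind your ``finite list plus GAP'' scheme cannot terminate. (Your treatment of $\psu_3(2)$ itself is fine.)

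A secondary problem is your reduction for $n>3$. Lemma~\ref{spl3_levi_factor} deliberately excludes twisted type $A$, and its mechanism does \emph{not} ``work identically'' for ${}^2A_{n-1}(q)=\psu_n(q)$: the order-two graph automorphism of the $A_{n-1}$ diagram reverses the path, so no $A_2$ sub-diagram is fixed pointwise (adjacent nodes are always swapped in pairs, with at most one central node fixed), and the fixed-point subgroup one extracts from a $\sigma$-stable configuration is a version of $\mathrm{SU}_3(q)$, not of $\spl_3(q)$; so Lemma~\ref{lem:sl3q} cannot be invoked this way. Likewise the containment $\psu_4(2)\leq\psu_4(4)$ ``by reduction of the field'' is not available, since subfield subgroups of unitary groups require extensions of odd degree. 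The correct and simpler route --- which your parenthetical ``large $\mathrm{PSp}$-subgroup'' alternative gestures at, and which the paper uses --- is that $\psu_4(q)$ contains $\mathrm{PSp}_4(q)$ for every $q$, which is never power-chordal by Lemma~\ref{lem:psp}, and for $n>4$ one descends to $\mathrm{SU}_4(q)$ (handled by GAP for $q=2$ and via $\spl_2(q)\times\spl_2(q)$ together with Theorem~\ref{lem:direct_prod} for $q>2$). So the $n>3$ part is repairable, but not as primarily written.
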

\begin{proof}
	First consider the case $n=3$. If $q$ is odd, then $\psu_3(q)$ contains cyclic subgroups of order
	$(q-1)(q+1)/\gcd(q+1,3)$, see~\cite[Section 3.10]{wilson_2009}. Both $(q-1)$ and $(q+1)/\gcd(q+1,3)$ are even, so for chordality we need
	$(q-1)(q+1)/\gcd(q+1,3)=2^ap^\delta$ to hold for some $a>1$, $\delta\in\{0,1\}$ and an odd prime $p$. If $\delta=0$ then clearly 
	$q\in\{3,5\}$. Assume $\delta=1$. Then one of $(q-1)$ and $(q+1)/\gcd(3,q+1)$ is a $2$-power. If $(q-1)=2^b$ then
	also $q=2^b+1$ must be prime or $9$ (Catalan's conjecture~\cite{mihailescu}). Assume that $q\not\in \{3,9\}$. Now $(q+1)=2(2^{b-1}+1)$, where $2^{b-1}+1$ is either an odd prime 
	or three times an odd prime. In the former case, we have that $q=2^b+1$ and $2^{b-1}+1$ must both be prime, which is impossible. In the latter case, $G$ also contains subgroups isomorphic to $C_{q+1}\times C_{(q+1)/(q+1,3)}$ (see for example~\cite{montanucci_giovanni}), which is not power-chordal if $q+1$ is divisible by $6p$ for some odd prime $p$.
	
	If $q$ is even, then $q-1$ and $q+1$ are coprime. Then $G$ contains subgroups isomorphic to $C_{q\pm 1}\times C_{(q+1)/(3, q+1)}$.
	Suppose $q\neq 2$. If $3$ divides $q+1$ then we find a subgroup isomorphic to $C_{(q+1)/3}\times \psl_2(q)$. This can be seen as follows: consider $ \mathrm{SU}_3(q)$ as a subgroup of $\spl_3(q^2)$ where $X\in \spl_3(q^2)$ is an element of $ \mathrm{SU}_3(q)$ if and only if $X\overline{X}^T=1$, where $X\mapsto \overline{X}$ is induced by the Frobenius automorphism of $\mathbb{F}_{q^2}/\mathbb{F}_{q}$. Then $\mathrm{SU}_3(q)$ contains a copy of $\mathrm{SU}_2(q)$ given by 
$$\begin{bmatrix}
		A & 0\\
		0 & 1 
	\end{bmatrix},$$
with $A\in \mathrm{SU}_2(q)$. Recall that $\mathrm{SU}_2(q)$ is isomorphic to $\spl_2(q)$, see~\cite[Section 3.6]{wilson_2009}, and also to $\psl_2(q)$ since $q$ is even. Furthermore, there is a subgroup in $\mathrm{SU}_3(q)$ consisting of elements 
	$$\begin{bmatrix}
		\alpha & 0 & 0\\
		0 & \alpha & 0 \\
		0 & 0 & \beta  
	\end{bmatrix},$$
with $\alpha \overline{\alpha} = 1$ and $\beta = \alpha^{-2}$, which is isomorphic to $C_{q+1}$ and intersects $Z(\mathrm{SU}_3(q))$ in a $C_3$. Together with the copy of $\psl_2(q)$, it forms a direct product and we factor out the central $C_3$ to get to $\psu_3(q)$.  Let $x \in \psu_3(q)$ be an element of order $(q+1)/3$ such that $x$ centralizes a copy of $\psl_2(q)$, say $H$. By considering the maximal subgroups of $\psu_3(q)$ with $q$ even (for example, see~\cite{hartley}) we conclude that $C_G(x)= \langle x\rangle \times H$. 
Consider $t$ an involution in $H$, and note that $\langle t \rangle$ is not normal in $C_G(x)$. By~\cite[Chapter 6, 5.16]{suzuki_gt2} we have that $C_G(t)=Q\rtimes \langle x \rangle$ holds, where $Q$ is a $2$-group with $C_G(Q)\subseteq Q$. Thus $\langle x \rangle$ is not normal in $C_G(t)$ because otherwise it would centralize $Q$. By Lemma~\ref{lem:conjug_cyclic_property_sufficient}, $G$ is not power-chordal.
	If $3$ does not divide $q+1$, then $q+1$ must be prime due to Catalan's conjecture~\cite{mihailescu}. The same arguments shows that $q-1$ must be prime or equal to $1$, a contradiction unless $q\in\{2,4,8\}$.
	 We checked the remaining cases with GAP~\cite{GAP4}. The only case in which $G$ is power-chordal is $q=2$.

	Now consider $n>3$. Recall that $\psu_4(q)$ contains $\mathrm{PSp}_4(q)$ (cf. \cite[Section 3.10]{wilson_2009}), which is not power-chordal. For $n>4$, the group
	$\psu_n(q)$ contains $\mathrm{SU}_4(q)$ which is not power-chordal for $q=2$ by calculations we performed in GAP~\cite{GAP4} and otherwise because it contains a subgroup of the form $\spl_2(q)\times \spl_2(q)$  (cf. \cite[Section 3.10]{wilson_2009}) which is not power-chordal for $q>2$ by Theorem~\ref{lem:direct_prod}.
\end{proof}

The remaining classical groups of Lie type to consider are the orthogonal groups.

\begin{lem}
	If $G$ is a simple orthogonal group of Lie rank at most $2$, then $G$ is isomorphic to one of $\psl_2(q),\psl_2(q^2),\mathrm{PSp}_4(q), \psl_4(q), \psu_4(q)$ and hence power-chordality has already been discussed in previous lemmas. Furthermore, the non-simple group $G=$P$\Omega^+_4(q)\cong\psl_2(q)\times\psl_2(q)$ is power-chordal if and only if $q=2$.	
	If $G$ is a simple orthogonal group of Lie rank at least $3$, then $G$ is not power-chordal.
\end{lem}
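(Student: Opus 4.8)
My plan is to separate the statement into three parts according to Lie rank, and to exploit exceptional isomorphisms of small orthogonal groups together with the $\mathrm{SL}_3(q)$ subgroup machinery (Lemma~\ref{spl3_levi_factor}) for larger rank. For the Lie rank at most $2$ cases, I would invoke the standard dictionary of exceptional isomorphisms (see~\cite[Section 3.12, Chapter 5]{wilson_2009}): the simple orthogonal groups of rank $1$ are $\mathrm{P\Omega}_3(q)\cong\psl_2(q)$; in rank $2$ we have $\mathrm{P\Omega}_5(q)\cong\mathrm{PSp}_4(q)$, $\mathrm{P\Omega}^-_6(q)\cong\psu_4(q)$, and the non-simple $\mathrm{P\Omega}^+_4(q)\cong\psl_2(q)\times\psl_2(q)$, while $\mathrm{P\Omega}^+_6(q)\cong\psl_4(q)$ has rank $3$ but is sometimes listed here. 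For $\mathrm{P\Omega}^+_4(q)\cong\psl_2(q)\times\psl_2(q)$ I would apply Theorem~\ref{lem:direct_prod}: since $\psl_2(q)$ is non-abelian simple, each factor contains two distinct cyclic subgroups of order $p$ for every prime $p\mid|\psl_2(q)|$, so whenever $|\psl_2(q)|$ has an odd prime divisor $p\neq 2$ the product $C_p^2\times C_2^2$-type configuration produces an $8$-cycle as in the proof of Lemma~\ref{lem:socles}; only $\psl_2(2)\cong S_3$ could possibly work, and $S_3\times S_3$ is small enough to verify directly (it is power-chordal since all elements have order $1$, $2$, or $3$ and each involution commutes with a unique $3$-element). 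Also $\psl_2(3)\cong\alt(4)$ would have to be excluded, but $|\alt(4)|=12$ has the odd prime $3$, giving the $8$-cycle, so indeed only $q=2$ survives. This reduces everything in low rank to lemmas already proved.

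For Lie rank at least $3$, the strategy is: use Lemma~\ref{spl3_levi_factor} to locate a copy of $\mathrm{SL}_3(q)$ modulo scalars inside $G$, then apply Lemma~\ref{lem:sl3q} to conclude non-chordality whenever $q\notin\{2,4\}$, and handle $q\in\{2,4\}$ by finding explicit small non-chordal subgroups. Concretely, the simple orthogonal groups of rank $\geq 3$ are $\mathrm{P\Omega}_{2n+1}(q)=B_n(q)$ with $n\geq 3$, $\mathrm{P\Omega}^+_{2n}(q)=D_n(q)$ with $n\geq 4$ (plus $\mathrm{P\Omega}^+_6(q)\cong\psl_4(q)$ if one counts it here), and $\mathrm{P\Omega}^-_{2n}(q)={}^2D_n(q)$ with $n\geq 4$ (plus $\mathrm{P\Omega}^-_6(q)\cong\psu_4(q)$). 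For the untwisted families $B_n$ and $D_n$, Lemma~\ref{spl3_levi_factor} directly yields an $\mathrm{SL}_3(q)/(\text{scalars})$ subgroup; for ${}^2D_n(q)$ with $n>3$ the lemma again applies. Thus for $q\notin\{2,4\}$ we are done. For $q=2$: $B_3(2)=\mathrm{PSp}_6(2)$ (these coincide over $\mathbb{F}_2$) contains $S_6$, which is non-chordal by Lemma~\ref{lem:chordality_an_sn}; $D_4(2)=\mathrm{P\Omega}^+_8(2)$ and ${}^2D_4(2)=\mathrm{P\Omega}^-_8(2)$ both contain $\mathrm{Sp}_6(2)$ and hence $S_6$ (this follows from the subgroup structure in~\cite{atlas} or from natural embeddings of orthogonal subspaces); the remaining $D_n(2),{}^2D_n(2)$ with $n\geq 5$ contain the rank-$4$ orthogonal subgroups and hence also $\mathrm{Sp}_6(2)$. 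For $q=4$: $\mathrm{PSp}_6(4)\supseteq \mathrm{PSp}_6(2)\supseteq S_6$, and each larger orthogonal group over $\mathbb{F}_4$ contains a corresponding $\mathrm{PSp}_6(4)$ or smaller orthogonal group reducing to this case.

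The main obstacle I anticipate is the bookkeeping for the small cases $q\in\{2,4\}$ in rank $3$ and $4$, where Lemma~\ref{spl3_levi_factor} gives no leverage and one must exhibit concrete non-chordal subgroups — the cleanest route is to always chase down a copy of $\mathrm{Sp}_6(q')$ with $q'\in\{2,4\}$ and then invoke that $\mathrm{Sp}_6(2)\supseteq S_6$ is non-chordal (Lemma~\ref{lem:chordality_an_sn}), together with the fact that $\mathrm{Sp}_6(4)\supseteq\mathrm{Sp}_6(2)$; one should double-check via~\cite{atlas} or GAP~\cite{GAP4} that these containments hold for the exact twisted and untwisted types in question, since the orthogonal geometry over $\mathbb{F}_2$ is slightly delicate (isotropic versus anisotropic forms). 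A secondary subtlety is making sure the exceptional isomorphisms are stated with the correct plus/minus type and the correct small exceptions (e.g. $\mathrm{P\Omega}^-_4(q)$ is not simple and $\mathrm{P\Omega}_3(2),\mathrm{P\Omega}_3(3)$ degenerate), but these are standard and can be cited from~\cite{wilson_2009}. Once the subgroup containments are pinned down, each case is a one-line appeal to an earlier lemma, so the proof should be short.
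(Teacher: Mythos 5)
Your proposal follows essentially the same route as the paper's proof: the exceptional isomorphisms dispose of Lie rank at most $2$, Lemma~\ref{spl3_levi_factor} together with Lemma~\ref{lem:sl3q} handles rank at least $3$ for $q\notin\{2,4\}$, and the cases $q\in\{2,4\}$ are settled via symplectic subgroups (the paper invokes the characteristic-$2$ isomorphism $\mathrm{P\Omega}_{2n+1}(q)\cong\mathrm{PSp}_{2n}(q)$ and the containment $\mathrm{PSp}_{2n-2}(q)\leq\mathrm{P\Omega}^{\pm}_{2n}(q)$ and then cites Lemma~\ref{lem:psp}, which is the same $S_6$-based content you chase down by hand), while your more careful treatment of $\mathrm{P\Omega}^+_4(q)\cong\psl_2(q)\times\psl_2(q)$ via Theorem~\ref{lem:direct_prod} fills in a point the paper's proof only asserts. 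One small correction: $S_3\times S_3$ does contain elements of order $6$, so your parenthetical justification ("all elements have order $1$, $2$, or $3$") is false as stated; the correct quick argument is that there are no elements of order $4$ or $9$ (so no induced $4$-cycles by Lemma~\ref{lem:char_prop_C4}) and every involution centralizes at most one cyclic subgroup of order $3$, which rules out longer power-reduced cycles.
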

\begin{proof}
	According to~\cite[Section 3.11]{wilson_2009}, there are generic isomorphisms P$\Omega_3(q)\cong\psl_2(q)$, P$\Omega^+_4(q)\cong\psl_2(q)\times\psl_2(q)$, P$\Omega^-_4(q)\cong\psl_2(q^2)$, P$\Omega_5(q)\cong\mathrm{PSp}_4(q)$, P$\Omega^+_6(q)\cong\psl_4(q)$, and
	P$\Omega^-_6(q)\cong\psu_4(q)$. The only group not explicitly treated before is P$\Omega^+_4(q)\cong\psl_2(q)\times\psl_2(q)$, but this is not a simple group.
	If $G$ is a simple orthogonal group of rank at least $3$, Lemma~\ref{spl3_levi_factor} shows that $G$ contains $\spl_3(q)$ modulo scalar matrices, and by Lemma~\ref{lem:sl3q}, it follows that $G$ is not power-chordal for $q\neq 2,4$. Now let $q=2$ or $4$. Recall that in characteristic $2$ we have P$\Omega_{2n+1}(q)\cong \mathrm{PSp}_{2n}(q)$, which is not power-chordal by Lemma~\ref{lem:psp} .

For rank greater than $3$, $\mathrm{P}\Omega^+_{2n}(q)$ contains $\mathrm{P}\Omega^+_{2n}(q_0)$ with $q_0^k=q$ and $k$ a prime~\cite[Section 3.10]{wilson_2009}. It suffices to consider the case $q=2$. Then $\mathrm{P}\Omega^+_{2n}(2)=\Omega^+_{2n}(2)$ contains a maximal subgroup isomorphic to $\mathrm{PSp}_{2n-2}(2)$. 
Similarly, for $q=2,4$, we have that $\mathrm{PSp}_{2n-2}(q)$ is a maximal subgroup of $\mathrm{P}\Omega^-_{2n}(q)\cong \Omega^-_{2n}(q)$ (see~\cite[Section 3.8.2]{wilson_2009}). Note that $\mathrm{PSp}_{n}(q)$ with $n\geq 4$ is not chordal by Lemma~\ref{lem:psp}. 
\end{proof}

\subsection{Exceptional groups of Lie type \& the Tits group}
This section is devoted to studying power-chordality of exceptional groups of Lie type and the Tits groups. We prove that except particular Suzuki groups with number theoretic problems involved, the rest of the exceptional groups of Lie type and the Tits group are not power-chordal. 

\begin{lem}
	The Suzuki groups, i.e., $\sz(q)$ with $q=2^{2n+1}$ and $n\geq 0$, are power-chordal if and only if the cyclic subgroups of orders $q-1$, $q+2^{n+1}+1$, and $q-2^{n+1}+1$ are all power-chordal.
\end{lem}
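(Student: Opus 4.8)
The strategy is to apply Lemma~\ref{lem:sufficient_condition_chordal} directly, which reduces power-chordality of $\sz(q)$ to a condition on the centralizers of elements of prime order. First, I would recall the subgroup structure of $\sz(q)$ with $q=2^{2n+1}$: it has cyclic maximal tori of orders $q-1$, $q+2^{n+1}+1$, and $q-2^{n+1}+1$, which are pairwise coprime, together with a Sylow $2$-subgroup $Q$ of order $q^2$ (a so-called Suzuki $2$-group) whose normalizer is $Q\rtimes C_{q-1}$. The maximal subgroups are (up to conjugacy) this Borel subgroup $Q\rtimes C_{q-1}$, the dihedral-type normalizer $D_{2(q-1)}$ of the torus $C_{q-1}$, the Frobenius groups $C_{q\pm 2^{n+1}+1}\rtimes C_4$ normalizing the other two tori, and subfield subgroups $\sz(q_0)$ with $q=q_0^k$, $k$ prime (see~\cite[Chapter XI]{wilson_2009} or~\cite{suzuki_gt2}). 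In particular every element of odd order lies in a unique conjugate of one of the three cyclic tori, and its centralizer is exactly that cyclic torus. Also, $\sz(q)$ contains no elements of order divisible by two distinct odd primes (each torus order is coprime to the others and to $2$), and it contains no elements of order $p^2r^2$ for distinct primes unless already one of the torus orders is non-power-chordal; so the first hypothesis of Lemma~\ref{lem:sufficient_condition_chordal} is controlled by the stated cyclic subgroups.

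The key computation is the centralizer of an involution. Every involution in $\sz(q)$ is $2$-central and, by the structure of Suzuki $2$-groups, its centralizer is the full Sylow $2$-subgroup $Q$ (see~\cite[Chapter 6]{suzuki_gt2}); in particular $C_G(t)$ is a $2$-group for every involution $t$. Hence for every element $x$ of prime order $p$: if $p=2$ then $C_G(x)=Q$ is a $2$-group, and if $p$ is odd then $C_G(x)=C_{q-1}$, $C_{q+2^{n+1}+1}$, or $C_{q-2^{n+1}+1}$ is cyclic. In either case $C_G(x)$ trivially satisfies the centralizer condition of Lemma~\ref{lem:sufficient_condition_chordal} (a $p$-group, or a cyclic $q$-group, which is the degenerate case $C_{q^n}\rtimes P$ with $P$ trivial). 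Therefore, provided $\sz(q)$ contains no element of order $p^2r^2$ — equivalently, provided each of $q-1$, $q+2^{n+1}+1$, $q-2^{n+1}+1$ is power-chordal, i.e. has at most one prime divisor of multiplicity $>1$ and we are not in the excluded situation — Lemma~\ref{lem:sufficient_condition_chordal} yields chordality.

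For the converse, if one of the three cyclic torus subgroups is not power-chordal, then $\sz(q)$ contains that subgroup and hence is not power-chordal, since power-chordality is closed under taking subgroups. Combining the two directions gives exactly the claimed equivalence. I expect the main obstacle to be assembling the precise facts about centralizers — specifically verifying that involution centralizers are genuinely $2$-groups and that odd-order element centralizers are exactly the cyclic tori with nothing larger — which requires carefully citing the structure theory of $\sz(q)$ and its Suzuki $2$-Sylow subgroups rather than any hard argument; once those structural facts are in hand, the chordality conclusion is an immediate application of Lemma~\ref{lem:sufficient_condition_chordal}.
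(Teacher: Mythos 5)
Your proposal is correct and follows essentially the same route as the paper: both verify that centralizers of prime-order elements (the cyclic maximal tori for odd primes, a nilpotent $2$-group for involutions, which the paper gets from Suzuki's theorem that centralizers are nilpotent and maximal cyclic subgroups are maximal nilpotent) satisfy the hypotheses of Lemma~\ref{lem:sufficient_condition_chordal}, with the converse following from subgroup-closure of power-chordality. Your treatment of the involution centralizer is if anything slightly more explicit than the paper's.
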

\begin{proof}
	Write $q:=2^{2n+1}$ and $r:=2^n$. For $n=0$, $\sz(2)$ is an $\mathrm{EPPO}$-group and hence chordal. For $n\geq 1$, the maximal cyclic subgroups of $\sz(q)$ are isomorphic to $C_q$, $C_{q-1}$, $C_{q+2r+1}$ and $C_{q-2r+1}$. 
	In particular, if all orders of such groups adhere to the necessary conditions, then $\sz(q)$ does not contain elements whose order is divisible by $(pp')^2$ or $pp'p''$ for distinct primes $p$, $p'$, and $p''$. Suzuki showed~\cite{suzuki_1960} that centralizers of elements in $\sz(q)$ are nilpotent and that the maximal cyclic subgroups are also maximal nilpotent subgroups. In particular, if $x$ is an element of prime order $p$, then
	$x$ is contained in a maximal cyclic subgroup which is then equal to its centralizer. By Lemma~\ref{lem:sufficient_condition_chordal}, $\sz(q)$ is power-chordal under the given assumptions.
\end{proof}

\begin{lem}
	The simple groups $\mathrm{Ree}(q)=\negphantom{x}\phantom{x}^2G_2(q)$, with $q=3^{2n+1}$  and $n\geq 1$, are not power-chordal. Furthermore, $\mathrm{Ree}(3)$ is not power-chordal.
\end{lem}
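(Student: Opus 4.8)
The plan is to derive non-chordality from Lemma~\ref{lem:conjug_cyclic_property_sufficient}, by one argument that works for every $q=3^{2n+1}$ and hence also covers the non-simple group $\mathrm{Ree}(3)$ (which is isomorphic to $\mathrm{P}\Gamma\mathrm{L}_2(8)$). First I would note that, unlike for $\sz(q)$, a purely number-theoretic obstruction is not available here: already for $q=27$ the maximal cyclic subgroups of $\mathrm{Ree}(27)$ have orders $9$, $26=2\cdot 13$, $28=2^2\cdot 7$, $19$, $37$, so there is no element whose order is divisible by $p^2r^2$ or by $prs$ for distinct primes, and Theorem~\ref{thm:pocho_nilpot} does not exclude chordality. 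So I would instead exploit the centralizer structure of $G:=\mathrm{Ree}(q)$, in a way analogous to the treatment of $\psu_3(q)$. The structural input I would cite (classical; see~\cite{wilson_2009} together with Ward's analysis of ${}^2G_2(q)$) is: for an involution $t\in G$ one has $C_G(t)=\langle t\rangle\times L$ with $L\cong\psl_2(q)$, and a Sylow $3$-subgroup $P$ of $G$ has order $q^3$ with $|Z(P)|\geq q$, since $Z(P)$ contains the central long root subgroup of order $q$. Fixing such a $t$ and choosing $u\in L$ of order $3$ (a unipotent element of $L$), the elements $t$ and $u$ commute ($t$ is central in $C_G(t)$) and have coprime orders $2$ and $3$, so it remains to verify the two non-normality hypotheses of Lemma~\ref{lem:conjug_cyclic_property_sufficient}.

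The first, that $\langle u\rangle$ is not normal in $C_G(t)=\langle t\rangle\times L$, is immediate: it would force $\langle u\rangle\trianglelefteq L$, but $L$ has no normal subgroup of order $3$ (for $n\geq 1$ it is non-abelian simple, and $L\cong\alt(4)$ when $n=0$). For the second, that $\langle t\rangle$ is not normal in $C_G(u)$, since $|\langle t\rangle|=2$ it suffices to show $C_G(u)\nsubseteq C_G(t)$. Suppose not; then $C_G(u)=C_{\langle t\rangle\times L}(u)=\langle t\rangle\times C_L(u)$, and as $C_L(u)$ is a Sylow $3$-subgroup of $\psl_2(q)$ (elementary abelian of order $q$), the $3$-part of $|C_G(u)|$ equals $q$. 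Now pick a Sylow $3$-subgroup $P$ of $G$ containing $u$: then $C_P(u)$ is a $3$-subgroup of $C_G(u)$, so $|C_P(u)|\mid q$, while $Z(P)\leq C_P(u)$ and $|Z(P)|\geq q$ force $C_P(u)=Z(P)$ and hence $u\in Z(P)$; but then $C_P(u)=P$, contradicting $|P|=q^3>q$. Thus $C_G(u)\nsubseteq C_G(t)$, and Lemma~\ref{lem:conjug_cyclic_property_sufficient} shows $\pow(G)$ is non-chordal for all $q=3^{2n+1}$ with $n\geq 0$; in particular both the simple Ree groups ($n\geq 1$) and $\mathrm{Ree}(3)$ fail to be power-chordal.

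The step I expect to be the main obstacle is assembling clean references for the two structural facts used — $C_G(t)\cong 2\times\psl_2(q)$ for an involution $t$, and $|Z(P)|\geq q$ for a Sylow $3$-subgroup $P$ — since the detailed subgroup theory of the Ree groups is somewhat scattered in the literature; both statements are standard but the most convenient sources should be pinned down. As a fallback, the single group $\mathrm{Ree}(3)=\mathrm{P}\Gamma\mathrm{L}_2(8)$ can be decoupled from this input entirely and verified by computer in GAP~\cite{GAP4}, e.g.\ by confirming that the explicit pair $(u,t)$ above violates the normality conditions, or by directly exhibiting an induced $4$-cycle.
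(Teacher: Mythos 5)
Your proposal is correct and takes essentially the same route as the paper: both apply Lemma~\ref{lem:conjug_cyclic_property_sufficient} to a commuting pair consisting of an involution $t$ and an order-$3$ element of the $\psl_2(q)$ factor of $C_G(t)\cong C_2\times\psl_2(q)$, relying on Ward's structural results on involution centralizers and Sylow $3$-subgroups — the paper certifies non-normality of the involution's cyclic group in the $3$-element's centralizer by exhibiting an explicit element of the elementary abelian subgroup of order $q^2$, while you reach the same conclusion by a counting contradiction using $|Z(P)|=q$, and the paper settles $\mathrm{Ree}(3)$ in GAP, which you also allow as a fallback. One immaterial slip in your aside: $\mathrm{Ree}(27)$ has no cyclic subgroup of order $28$ (the torus of order $q+1$ is non-cyclic; the largest cyclic order there is $14$), but your point that no purely number-theoretic obstruction applies remains valid.
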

\begin{proof}
	Set $G:=\negphantom{x}\phantom{x}^2G_2(q)$. We utilize results from~\cite{ward_1966}. The centralizer of an involution in $G$ is isomorphic to $C_2\times \psl_2(q)$, and all involutions in $G$ are conjugate. Let $P$ be a Sylow $3$-subgroup of $G$. Then $P$ has order $q^3$ and contains a subgroup $P_1$ which is elementary abelian of order $q^2$. The normalizer is $N_G(P)=P\rtimes C_{q-1}$. Let $x$ be the involution of $C_{q-1}$. We have that $C_P(x)=C_{P_1}(x) \cong \langle y \rangle$ for some $y \in P_1$ of order $q$. Then there exists an element $z\in P$ of order $3^a$ for some $a$ such that $z\in P_1$ but $z \not \in C_{P_1}(x)$. Since $P_1 \subseteq C_G(y)$ ($P_1$ is abelian), we have that $\langle x \rangle \not \trianglelefteq C_G(y)$ because $z\in C_G(y)$ and $x$ has order $2$. Recall that $y\in C_G(x) \cong C_2\times \psl_2(q)$ so $y$ is contained in the copy of $\psl_2(q)$. Then $\langle y \rangle \not \trianglelefteq C_G(x)$ because $\psl_2(q)$ is simple as $q \geq 9$. The result follows from Lemma~\ref{lem:conjug_cyclic_property_sufficient}. Furthermore, we checked that $\mathrm{Ree}(3)$ is not power-chordal with GAP~\cite{GAP4}.
\end{proof}

\begin{lem}
	The groups $G_2(q)$, $\negphantom{x}\phantom{x}^3D_4(q)$, $\negphantom{x}\phantom{x}^2F_4(q)$, and the Tits group
	$\negphantom{x}\phantom{x}^2F_4(2)'$ are not power-chordal.
\end{lem}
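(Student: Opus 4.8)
The plan is to treat all four families with one device: power-chordality is inherited by subgroups (it is defined by a family of forbidden induced subgraphs), so in each group it suffices to point to a subgroup that has already been shown to be non-power-chordal in the preceding lemmas.

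\emph{The case $G_2(q)$.} I would use the subsystem subgroup attached to a sub-root-system of type $A_2$ inside the $G_2$ root system. For $q\geq 3$ this yields a copy of a version of $\spl_3(q)$ inside $G_2(q)$ (see~\cite{wilson_2009}); if $q\notin\{2,4\}$, Lemma~\ref{lem:sl3q} applies and $G_2(q)$ is non-power-chordal. For $q=4$ the subgroup is $\spl_3(4)$ itself (its centre has order $\gcd(3,q-1)=3$, so it is not the simple quotient), which was shown to be non-power-chordal above; note that this really matters, since $\psl_3(4)$ \emph{is} power-chordal. For $q=3$ I would instead use that $G_2(3)$ has a maximal subgroup isomorphic to $\mathrm{L}_3(3){:}2$, which contains $\psl_3(3)$ and is non-power-chordal by Lemma~\ref{lem:psl_larger_3} (see~\cite{atlas}). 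Finally $G_2(2)\cong\psu_3(3){:}2$ contains $\psu_3(3)$, which is non-power-chordal by Lemma~\ref{lem:psu}.

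\emph{The cases ${}^3D_4(q)$, ${}^2F_4(q)$, and the Tits group.} For ${}^3D_4(q)$ I would invoke that $G_2(q)$ embeds in ${}^3D_4(q)$ (in fact maximally; see~\cite{wilson_2009}), so non-power-chordality follows from the previous case for every $q$. For ${}^2F_4(q)$ with $q=2^{2n+1}$ and $n\geq 1$ (so $q\geq 8$) I would use the maximal subgroup $\sz(q)\wr 2$, which contains $\sz(q)\times\sz(q)$; since $q\geq 8$, the group $\sz(q)$ is a non-abelian finite simple group, and a direct product of two such groups is never power-chordal by the argument in the proof of Lemma~\ref{lem:socles} — pick distinct primes $p,p'$ dividing $|\sz(q)|$ (for instance odd primes dividing the coprime integers $q-1$ and $q^2+1$), take distinct cyclic subgroups of order $p$ in the first factor and distinct cyclic subgroups of order $p'$ in the second, and read off an induced $8$-cycle. (Alternatively one may use $\mathrm{PGU}_3(q)\leq {}^2F_4(q)$ together with Lemma~\ref{lem:psu}.) For the Tits group ${}^2F_4(2)'$ I would use that it has a maximal subgroup isomorphic to $\mathrm{L}_3(3){:}2$ (see~\cite{atlas}), which contains $\psl_3(3)$ and is hence non-power-chordal by Lemma~\ref{lem:psl_larger_3}.

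The work here is bookkeeping rather than new ideas: the main point to get right is the sourcing of the subgroup embeddings — the $A_2$-subsystem subgroup of $G_2(q)$ (and whether it is $\spl_3(q)$ or a central quotient), the maximal embedding $G_2(q)\leq {}^3D_4(q)$, and the maximal subgroup $\sz(q)\wr 2$ of ${}^2F_4(q)$ — from the classification of maximal subgroups of exceptional groups and from~\cite{atlas}. The only genuinely delicate point is $G_2(q)$ at $q\in\{2,4\}$, where the generic argument via Lemma~\ref{lem:sl3q} does not apply and one must argue with the specific small groups instead (or, as a uniform fallback for all the finitely many small cases, with a direct computation in \textsc{GAP}~\cite{GAP4}).
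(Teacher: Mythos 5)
Your proposal is correct and follows the same overall strategy as the paper: exploit subgroup-closure of power-chordality and point to a previously handled non-power-chordal subgroup in each case. For $G_2(q)$ with $q\notin\{2,4\}$ and for ${}^3D_4(q)$ (via $G_2(q)\leq{}^3D_4(q)$) and the Tits group (via a subgroup isomorphic to $\spl_3(3)=\psl_3(3)$) your witnesses coincide with the paper's. The differences are in the exceptional and twisted $F_4$ cases: for $G_2(4)$ the paper uses $G_2(2)\cong\psu_3(3){:}2\leq G_2(4)$, while you use the subsystem subgroup $\spl_3(4)$ (correctly noting that it is the version with centre $C_3$, which matters since $\psl_3(4)$ is power-chordal, and that the earlier lemma on $\spl_3(4)$ applies); and for ${}^2F_4(q)$, $q\geq 8$, the paper descends along subfield subgroups to ${}^2F_4(2)'\geq\spl_3(3)$, whereas you use the maximal subgroup $\sz(q)\wr 2\geq\sz(q)\times\sz(q)$ together with the fact (established in the proofs of Lemma~\ref{lem:socles} and Lemma~\ref{lem:f4q}) that a direct product of two non-abelian simple groups is never power-chordal. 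Both routes are sound; the paper's has the advantage of needing only one explicit small witness ($\spl_3(3)$) and the subfield chain, while yours avoids the subfield-subgroup argument at the cost of quoting Malle's maximal subgroup $\sz(q)\wr 2$. Your separate treatment of $q=3$ is harmless but redundant: Lemma~\ref{lem:sl3q} already covers $q=3$ (and since $\gcd(3,q-1)=1$ there, $\spl_3(3)=\psl_3(3)$, so no version issue arises).
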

\begin{proof}
	$G_2(q)$ contains $\spl_3(q)$ which is non-chordal except in the cases $q=2$ or $q=4$ by Lemma~\ref{lem:sl3q}. But $G_2(2)$ is $\mathrm{PSU}_3(3)\rtimes C_2$ and also non-chordal and $G_2(4)$ contains $G_2(2)$. Moreover, $\negphantom{x}\phantom{x}^3D_4(q)$ contains $G_2(q)$ and is hence not power-chordal. Finally, $\negphantom{x}\phantom{x}^2F_4(2)'$ contains $\spl_3(3)$ and 
	$\negphantom{x}\phantom{x}^2F_4(q)$ contains $\negphantom{x}\phantom{x}^2F_4(2)'$~\cite{wilson_2009}, so the claim follows since $\spl_3(3)$ is not power-chordal.
\end{proof}

\begin{lem}\label{lem:f4q}
	The groups $F_4(q)$ are not power-chordal.
\end{lem}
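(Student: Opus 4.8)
The plan is to reduce $F_4(q)$ to groups whose failure of power-chordality has already been established, handling the generic case uniformly and the two small fields $q\in\{2,4\}$ separately. For $q\notin\{2,4\}$ I would argue as follows. Since $F_4(q)$ is an untwisted finite simple group of Lie type of rank $4\geq 3$, it falls under the hypotheses of Lemma~\ref{spl3_levi_factor}, which therefore produces a subgroup of $F_4(q)$ isomorphic to $\spl_3(q)$ divided by a group of scalar matrices (concretely, a version of $A_2(q)$ attached to one of the two $A_2$ sub-diagrams of the $F_4$ Dynkin diagram $\circ-\circ\Rightarrow\circ-\circ$). By Lemma~\ref{lem:sl3q}, every such group contains an induced cycle of length greater than $3$ once $q\notin\{2,4\}$, and since power-chordality passes to subgroups, $F_4(q)$ is not power-chordal in this range.

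It then remains to treat $q=2$ and $q=4$. For $q=2$ I would invoke the subsystem-subgroup structure of $F_4$: the rank-$3$ sub-diagram of type $C_3$ (equivalently $B_3$ in characteristic $2$) yields a subgroup isomorphic to $\mathrm{Sp}_6(2)$ inside $F_4(2)$, and here there is no isogeny ambiguity since $\mathrm{Sp}_6(2)$ is simple. By Lemma~\ref{lem:psp}, $\mathrm{Sp}_6(2)=\mathrm{PSp}_6(2)$ is not power-chordal, hence neither is $F_4(2)$. (Alternatively one could use the $A_2\tilde A_2$ subsystem, which for $q=2$ gives a copy of $\psl_3(2)\times\psl_3(2)$ — not power-chordal, as two distinct cyclic subgroups of order $3$ in the first factor together with two distinct cyclic subgroups of order $7$ in the second yield an induced $8$-cycle — or simply verify $F_4(2)$ directly in \textsc{GAP}~\cite{GAP4} via Lemma~\ref{lem:conjug_cyclic_property_sufficient}, for which it suffices to exhibit one commuting pair of coprime order violating its hypothesis.) For $q=4$, the group $F_4(4)$ contains the subfield subgroup $F_4(2)$, so it is not power-chordal either; equivalently $F_4(4)$ contains $\mathrm{Sp}_6(4)=\mathrm{PSp}_6(4)$, again excluded by Lemma~\ref{lem:psp}. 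Combining the three cases shows $F_4(q)$ is never power-chordal.

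The one genuinely delicate point is pinning down a concrete non-power-chordal subgroup in $F_4(2)$ and $F_4(4)$: one must ensure that the relevant subsystem subgroup of $F_4$ really sits inside $F_4(q)$ with an isogeny type still containing a non-power-chordal subgroup, rather than a central quotient that happens to be power-chordal — the phenomenon witnessed by $\psl_3(4)$. For $q=2$ this is unproblematic because the central extensions in question split over $\gcd(3,q-1)=\gcd(2,q-1)=1$; in the final write-up I would cite the list of (maximal) subgroups of $F_4(q)$ from~\cite{wilson_2009} and~\cite{atlas} and, if desired, corroborate it by a direct \textsc{GAP} check through Lemma~\ref{lem:conjug_cyclic_property_sufficient}, exactly as was done for the other exceptional groups treated above.
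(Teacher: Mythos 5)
Your argument is correct, but it takes a different route from the paper. The paper does not split into cases at all: it quotes from~\cite[Section 4.8.9]{wilson_2009} that $F_4(q)$ contains $(\mathrm{SU}_3(q)\times \mathrm{SU}_3(q))\rtimes 2$ when $q\equiv 0,1 \pmod 3$ and $(\spl_3(q)\times\spl_3(q))\rtimes 2$ when $q\equiv 0,2\pmod 3$, and then kills all $q$ uniformly (including $q=2,4$) by the fact that a direct product of two such non-abelian groups is never power-chordal -- essentially the $8$-cycle coming from two distinct cyclic $p$-subgroups in one factor commuting with two distinct cyclic $r$-subgroups in the other, as in Theorem~\ref{lem:direct_prod} and Lemma~\ref{lem:socles}. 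Your version handles the generic case exactly as the paper handles the other rank~$\geq 3$ groups of Lie type, via Lemma~\ref{spl3_levi_factor} and Lemma~\ref{lem:sl3q} (and since that lemma produces a cycle avoiding scalars, the passage to the central quotient is fine), and then patches $q\in\{2,4\}$ with $\mathrm{Sp}_6(2)\leq \mathrm{Sp}_8(2)\leq F_4(2)$ and the subfield embedding $F_4(2)\leq F_4(4)$, both excluded by Lemma~\ref{lem:psp}; your fallback via the $A_2\tilde A_2$ subsystem giving $\psl_3(2)\times\psl_3(2)$ at $q=2$ also works and is really the same mechanism the paper uses. What the paper's route buys is uniformity: one subgroup, one citation, no small-field exceptions and no computer check. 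What your route buys is independence from the specific $A_2\tilde A_2$-subsystem structure in~\cite{wilson_2009}; its cost is the extra subsystem/subfield subgroup facts for $q\in\{2,4\}$, which are standard but not proved in the paper, and you rightly flag the isogeny-type issue there (at $q=2$ the relevant centers are trivial because $\gcd(3,q-1)=1$, so there is indeed no $\psl_3(4)$-style escape).
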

\begin{proof}
	 $F_4(q)$ contains $(\mathrm{SU}_3(q)\times \mathrm{SU}_3(q))\rtimes2$ when $q\equiv 0,1$ mod $3$ and $(\spl_3(q)\times \spl_3(q))\rtimes2$ when $q\equiv 0,2$ mod $3$ by~\cite[Section 4.8.9]{wilson_2009}, none of which are power-chordal because the direct product of two non-abelian simple groups is not power-chordal via Lemma~\ref{lem:direct_prod}. 
\end{proof}

\begin{lem}
	The groups $E_6(q)$, 
	$\negphantom{x}\phantom{x}^2E_6(q)$, $E_7(q)$ and $E_8(q)$ are not power-chordal.
\end{lem}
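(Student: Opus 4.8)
The plan is to reduce the statement to the case of $F_4(q)$, which has already been settled in Lemma~\ref{lem:f4q}. By the second part of Lemma~\ref{spl3_levi_factor}, each of $E_6(q)$, ${}^2E_6(q)$, $E_7(q)$, and $E_8(q)$ contains a simple subgroup of type $F_4(q)$, built from the Dynkin-diagram constructions recalled in Section~\ref{sec:prelims_simple_grps}. So all four families are handled by a single subgroup argument, with no need to distinguish the twisted case ${}^2E_6(q)$ from the untwisted ones.

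Since Lemma~\ref{lem:f4q} asserts that no group $F_4(q)$ is power-chordal, and since the class of power-chordal groups is closed under taking subgroups (as used, for instance, in Lemma~\ref{lem:chordality_an_sn}), it follows immediately that none of $E_6(q)$, ${}^2E_6(q)$, $E_7(q)$, $E_8(q)$ is power-chordal. Given the two cited lemmas, the statement is essentially a one-line corollary, requiring no further case distinction, number theory, or computer computation.

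The only delicate point has already been absorbed into Lemma~\ref{spl3_levi_factor}, namely that for $q=2$ the version of $F_4(q)$ sitting inside $E_7(q)$ and $E_8(q)$ really is the simple group $F_4(2)$ and not a proper central cover — this is forced because the ambient version of $E_6(q)$ has center of order $1$ or $3$, coprime to the Schur multiplier $C_2$ of $F_4(2)$. I would also remark in passing that for $q\notin\{2,4\}$ one could instead invoke the \emph{first} part of Lemma~\ref{spl3_levi_factor} together with Lemma~\ref{lem:sl3q}, since these $E$-type groups (or their associated untwisted versions) contain a copy of $\mathrm{SL}_3(q)$ modulo scalars; but routing everything through $F_4(q)$ is cleaner because Lemma~\ref{lem:f4q} already covers all prime powers $q$ uniformly. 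Accordingly, I expect no genuine obstacle in the present lemma: the real work was done upstream, in exhibiting the $F_4(q)$ subgroups and in proving their non-power-chordality.
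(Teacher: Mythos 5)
Your proposal is correct and follows exactly the paper's route: by Lemma~\ref{spl3_levi_factor} each of these groups contains a subgroup $F_4(q)$, which is not power-chordal by Lemma~\ref{lem:f4q}, and power-chordality is subgroup-closed. Your additional remarks (the Schur-multiplier point and the alternative $\mathrm{SL}_3(q)$ route) are fine but not needed beyond what the paper already does.
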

\begin{proof}
	All these groups contain $F_4(q)$ by Lemma~\ref{spl3_levi_factor}, and these groups are not power-chordal by Lemma~\ref{lem:f4q}.
\end{proof}

\subsection{Sporadic groups}
The last groups to consider are the $26$ sporadic groups. The result below shows that none of the sporadic groups are power-chordal and we prove this by either finding (maximal) subgroups of certain sporadic groups that are non-chordal or finding commuting elements of coprime order that satisfy Lemma~\ref{lem:conjug_cyclic_property_sufficient}. 

\begin{lem} \label{lem:sporadics}
	Let $G$ be a sporadic simple group. Then $G$ is not power-chordal. 
\end{lem}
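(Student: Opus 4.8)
The plan is a case-by-case pass through the $26$ sporadic groups, using the maximal-subgroup and centralizer data of the \textsc{ATLAS}~\cite{atlas} together with the non-chordality tools built up above. Two mechanisms carry essentially all the weight. The first is \emph{subgroup containment}: since power-chordality passes to subgroups, it suffices to exhibit inside $G$ a subgroup that has already been shown to be non-power-chordal. By now the available ``forbidden'' subgroups are numerous --- any group with an element of order $pqr$ or $p^2q^2$ (Theorem~\ref{thm:pocho_nilpot}); $\sym(n)$ for $n\ge 6$ and $\alt(n)$ for $n\ge 8$ (Lemma~\ref{lem:chordality_an_sn}); $\spl_3(q)$ modulo scalars for $q\notin\{2,4\}$ (Lemma~\ref{lem:sl3q}); the groups $\mathrm{PSp}_{2n}(q)$ with $n\ge 2$ (Lemma~\ref{lem:psp}); $\psu_n(q)$ with $n\ge 3$ and $(n,q)\ne(3,2)$ (Lemma~\ref{lem:psu}), in particular $\psu_3(3)$; $\psl_n(q)$ with $n\ge 3$ other than $\psl_3(2)$ and $\psl_3(4)$ (Lemma~\ref{lem:psl_larger_3}); direct products of two non-abelian simple groups, and more generally $C_p\times T$ for $T$ non-abelian simple outside the power-chordal range of Lemma~\ref{lem:socles}, in particular $C_3\times\alt(6)$ (Theorem~\ref{lem:direct_prod}, the discussion after Lemma~\ref{lem:socles}, and Lemma~\ref{lem:char_prop_C4}); and, recursively, any sporadic group dealt with earlier in the list. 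The second mechanism is the centralizer criterion of Lemma~\ref{lem:conjug_cyclic_property_sufficient}: it is enough to produce commuting elements $x,y$ of coprime order with $\langle x\rangle$ not normal in $C_G(y)$ and $\langle y\rangle$ not normal in $C_G(x)$. In practice I take $x$ an involution, read $C_G(x)$ off the \textsc{ATLAS} (typically of the shape $2{\cdot}L$, $2\times L$, or a $2$-local extension), pick inside it an element $y$ of odd prime order $q$ whose Sylow $q$-subgroup is not normal in $C_G(x)$, and then check, again from the \textsc{ATLAS}, that $C_G(y)$ is small enough that $x$ generates no normal subgroup of it.

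Carrying this out, the overwhelming majority of the sporadic groups fall to the first mechanism: the larger Mathieu groups via $\mathrm{M}_{24}\supset\mathrm{M}_{12}\supset\mathrm{M}_{11}$ and $\mathrm{M}_{23}\supset\mathrm{M}_{22}$; the Janko groups via $\mathrm{J}_2\supset\psu_3(3)$ and $\mathrm{J}_3\supset(C_3\times\alt(6)){:}2$; and each of the Conway, Fischer, Higman--Sims, McLaughlin, Held, Rudvalis, Suzuki, O'Nan, Harada--Norton, Lyons, Thompson, Baby Monster and Monster groups by exhibiting among its maximal subgroups (or a section thereof) one of $\alt(8)$, $\sym(6)$, a ``bad'' $\spl_3(q)$, some $\mathrm{PSp}_{2n}(q)$, a product of two non-abelian simple groups, an element of order $pqr$ or $p^2q^2$, or a smaller sporadic group. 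The residual cases are the few small sporadic groups \emph{all} of whose maximal subgroups happen to be power-chordal and which contain no element of order $pqr$ or $p^2q^2$, so that no proper subgroup detects non-chordality. The prototype is $\mathrm{M}_{11}$, whose maximal subgroups $\sym(5)$, $\psl_2(11)$, $\mathrm{M}_{10}$, $\mathrm{M}_9{:}2$ and $2{\cdot}\sym(4)\cong\mathrm{GL}_2(3)$ are all power-chordal. Here I apply Lemma~\ref{lem:conjug_cyclic_property_sufficient} directly: take $x$ in the class $2A$, so $C_G(x)\cong\mathrm{GL}_2(3)$, in which the four Sylow $3$-subgroups are not normal; if $y$ has order $3$ in $C_G(x)$ then $C_G(y)\cong C_3\times\sym(3)$, whose three involutions (one of which is $x$) are conjugate and hence generate no normal subgroup. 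As $x$ and $y$ have coprime orders and commute, Lemma~\ref{lem:conjug_cyclic_property_sufficient} applies and $\mathrm{M}_{11}$ is not power-chordal. The analogous reading of \textsc{ATLAS} centralizer data settles the remaining small sporadic groups, notably $\mathrm{M}_{22}$, and wherever the bookkeeping is least transparent the assertion is confirmed directly in \textsc{GAP}~\cite{GAP4}, either by locating a short induced cycle or by checking the hypotheses of Lemma~\ref{lem:conjug_cyclic_property_sufficient}. Together with the previous lemmas this also completes the proof of Theorem~\ref{thm:main}.

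The step I expect to be the main obstacle is precisely this last one. For the groups of Lie type there was a uniform source of long cycles (the $\spl_3(q)$-sections of Lemma~\ref{spl3_levi_factor} feeding into Lemma~\ref{lem:sl3q}), but no comparable single argument is available across all $26$ sporadic groups, and for the smallest of them non-chordality is invisible at the level of proper subgroups. One therefore has to descend to an individual analysis of involution and element centralizers, and --- as the power-chordality of several deceptively large-looking subgroups such as $\mathrm{M}_{10}$, $\mathrm{GL}_2(3)$, $2\times\alt(5)$, $\alt(7)$, $\psl_3(4)$ and $\mathrm{D}_6\times\mathrm{D}_{10}$ shows --- one must anchor every verification to the precise criteria of Lemmas~\ref{lem:char_prop_C4} and~\ref{lem:conjug_cyclic_property_sufficient} and Theorems~\ref{thm:pocho_nilpot} and~\ref{lem:direct_prod}, rather than to a vague sense that a subgroup is ``complicated enough''.
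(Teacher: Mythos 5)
Your proposal is correct and follows essentially the same route as the paper: establish non-chordality of $\mathrm{M}_{11}$, propagate it by subgroup closure to the sporadic groups containing it, and treat the remaining few (e.g.\ $\mathrm{M}_{22}$, $\mathrm{J}_1$-type cases) via Lemma~\ref{lem:conjug_cyclic_property_sufficient} applied to ATLAS centralizer data, with GAP as a fallback. The only real differences are cosmetic: you certify $\mathrm{M}_{11}$ by hand (using $C_G(x)\cong\gl_2(3)$ for a $2A$-involution $x$ and $C_G(y)\cong C_3\times\sym(3)$ for $y$ of order $3$, which is a valid and slightly cleaner substitute for the paper's GAP/SageMath computation) and you use different witness subgroups for $\mathrm{J}_2$, $\mathrm{J}_3$, $\mathrm{He}$, $\mathrm{Ru}$, $\mathrm{Th}$ than the paper does; both choices work.
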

\begin{proof}
	The power graph of the Mathieu group $\mathrm{M}_{11}$ is not chordal, via calculations we performed in GAP~\cite{GAP4} and SageMath~\cite{sagemath}. Recall that the Mathieu group $\mathrm{M}_{11}$ is a subgroup of all the other sporadic simple groups except seven of them: $\mathrm{J}_1$, $\mathrm{M}_{22}$, $\mathrm{J}_2$, $\mathrm{J}_3$, $\mathrm{He}$, $\mathrm{Ru}$ and $\mathrm{Th}$ (see~\cite{atlas}). Except for the Janko group $\mathrm{J}_1$, we can rule out the remaining groups by finding maximal subgroups whose power graphs are not chordal.
The Janko group $\mathrm{J}_2$ contains $A_4 \times A_5$, which is not power-chordal.
The Rudvalis group $\mathrm{Ru}$ contains $A_8$, which is not power-chordal.
The Janko group $\mathrm{J}_3$ contains $((C_2\times Q_8)\rtimes C_2)\rtimes A_5$ which is not power-chordal. The Held group $\mathrm{He}$ contains $\psl_3(2)\times S_4$, which contains elements of order $42$ and is thus not power-chordal. The Thompson group $\mathrm{Th}$ contains $\psl_3(3)$, which is not power-chordal.
The standard representation of the Mathieu group $\mathrm{M}_{22}$ in GAP~\cite{GAP4} contains elements 
\[
	x=(1,4,16)(2,15,12)(3,8,18)(5,13,9)(6,11,14)(7,22,17)
\]
of order $3$ and
\[
	y=(1,14)(2,9)(4,6)(5,15)(10,21)(11,16)(12,13)(19,20)
\]
of order $2$. We find that $xy=yx$, $\langle y \rangle \not \trianglelefteq C_{\mathrm{M}_{22}}(x)$, and $\langle x \rangle\not \trianglelefteq C_{M_{22}}(y)$ hold. By Lemma~\ref{lem:conjug_cyclic_property_sufficient}, $\pow(\mathrm{M}_{22})$ is not chordal. Finally, we are left with $\mathrm{J}_1$, all of whose maximal subgroups are power-chordal. Let $x$ and $y$ in $G$ be of orders $3$ and $2$, respectively, such that $xy=yx$. Then it holds $C_G(x)=\langle x \rangle \times D_{10}$ and $C_G(y)=C_2\times A_5$. Then  $x$ and $y$ satisfy Lemma~\ref{lem:conjug_cyclic_property_sufficient}, and the result follows.
\end{proof}

This completes our analysis of power graphs of finite simple groups with respect to chordality. 
In total, the present section constitutes a proof of Theorem~\ref{thm:main}. 

\section{Beyond simple groups: interaction between cycles and paths}
In this final section we develop a relation between paths and cycles in the power graph. More precisely, we give
general bounds on the length of the longest induced path in a chordal power graph, depending only on the (number of) prime divisors of the group. Here, the length of an induced path in $\pow(G)$ always refers to the number of its vertices.

Recall that an induced path in the power graph has alternating arc directions in the directed power graph (Lemma~\ref{lem:paths_alternating}).

\begin{lem}\label{lem:in_vertex_prime_power}
	Let $G$ be a group such that $|G|$ is not divisible by $p^2q^2$ or $pqr$ for distinct primes $p$, $q$ and $r$.
	Let $\wp$ be an induced path in $\pow(G)$ with at least four vertices. Then
	the orders of consecutive in-vertices
	of $\wp$ are prime-powers over distinct primes.
\end{lem}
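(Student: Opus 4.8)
The plan is to exploit the alternating structure from Lemma~\ref{lem:paths_alternating} together with the divisibility restriction on $|G|$. By Lemma~\ref{lem:paths_alternating} the in-vertices and out-vertices of $\wp$ alternate along the path, so since $\wp$ has at least four vertices there is at least one out-vertex having an in-vertex as neighbour on both sides. Fix any two consecutive in-vertices $x$ and $y$ of $\wp$ and let $z$ be the out-vertex between them. As $z$ is an out-vertex, both edges $\{z,x\}$ and $\{z,y\}$ of $\mathrm D\pow(G)$ point away from $z$, that is, $x\in\langle z\rangle$ and $y\in\langle z\rangle$; in particular $\langle x\rangle$ and $\langle y\rangle$ are subgroups of the cyclic group $\langle z\rangle$. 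Since $\wp$ is induced and $x,y$ are not adjacent on the path, $x$ and $y$ are non-adjacent in $\pow(G)$, and because two elements of a common cyclic group are adjacent in the power graph precisely when their orders are comparable under divisibility, it follows that $|x|$ and $|y|$ are incomparable.

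Next I would pin down the cyclic structure of $\langle z\rangle$. Since $|z|$ divides $|G|$, it is divisible neither by $pqr$ for three distinct primes nor by $p^2q^2$ for two distinct primes; hence $|z|$ is either a prime power or of the form $p^{a}q$ with $a\ge 1$ for distinct primes $p,q$ (after relabelling so that $q$ occurs only to the first power). The prime-power case is impossible: all subgroups of a cyclic $p$-group are linearly ordered, contradicting the incomparability of $|x|$ and $|y|$. So $|z|=p^{a}q$, and both $|x|$ and $|y|$ divide $p^{a}q$. Writing $|x|=p^{a_1}q^{c_1}$ and $|y|=p^{a_2}q^{c_2}$ with $c_1,c_2\in\{0,1\}$ and using incomparability, we may assume $a_1>a_2$ and $c_1<c_2$, whence $c_1=0$, $c_2=1$ and $a_1=a$; in particular $|x|=p^{a}$ is a prime power, and it remains only to show that $a_2=0$, i.e.\ that $|y|=q$.

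The step I expect to be the main obstacle is exactly this last point: ruling out that one of the two consecutive in-vertices has order $p^{a_2}q$ with $a_2\ge 1$. The four-vertex window around $x,y,z$ alone does not force $a_2=0$, since the constraints on $|G|$ are consistent with it, so the argument has to use more of $\wp$: the out-vertices adjacent to $x$ and $y$ on their far sides have orders that are multiples of $|x|$ resp.\ $|y|$ and divisors of $|G|$, hence are themselves severely restricted (a further prime divisor would already violate the $pqr$-condition, and a larger prime-to-second-power paired with $q$ would violate the $p^2q^2$-condition, cf.\ Theorem~\ref{thm:pocho_nilpot} and Lemma~\ref{lem:char_prop_C4}), and these out-vertices in turn lead into still further in-vertices whose orders must again be incomparable with $|x|$ resp.\ $|y|$; chasing these incomparabilities, together with the fact that $\wp$ is a path and so, inside the finite group $G$, eventually cannot be continued, is what should force the $p$-part of the $q$-divisible in-vertex to be trivial. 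Granting this, $|x|=p^{a}$ and $|y|=q$ are prime powers over the distinct primes $p$ and $q$; since $\{x,y\}$ was an arbitrary pair of consecutive in-vertices, the conclusion follows for all of them.
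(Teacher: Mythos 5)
Your setup---alternation of in- and out-vertices, placing the two consecutive in-vertices $x,y$ inside the cyclic group generated by the out-vertex $z$ between them, deducing incomparability of $|x|$ and $|y|$, and pinning $|z|$ down to the form $p^aq$---is exactly how the paper argues (your intermediate claim $a_1=a$ is unjustified, but also unneeded). The real problem is the step you defer: showing $a_2=0$. That is not a routine verification you left out; it fails under the stated hypotheses. Take $G=C_{18}\times C_3=\langle u\rangle\times\langle b\rangle$, so $|G|=54$ is divisible neither by $p^2q^2$ nor by $pqr$. Put $y=(u^2,b^2)$, $g=(u,b)$, $x=(u^3,1)$, $w=(u,b^2)$. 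Then $y=g^2$ and $x=g^3=w^3$, while no other pair is adjacent in $\pow(G)$: for instance $w\in\langle g\rangle$ or $g\in\langle w\rangle$ would require an exponent $k$ with $k\equiv 1\pmod{18}$ and $k\equiv 2\pmod 3$ simultaneously, and the pairs $\{x,y\}$, $\{y,w\}$ are excluded similarly. Hence $(y,g,x,w)$ is an induced $4$-vertex path whose consecutive in-vertices $y$ and $x$ have orders $9$ and $6$, and $6$ is not a prime power. So chasing incomparabilities further along the path cannot force the $p$-part of the $q$-divisible in-vertex to be trivial: the configuration you could not exclude genuinely occurs, already in a four-vertex path, so your ``granting this'' step is not fillable as stated.

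For comparison, the paper's proof consists of your first two steps followed by the single assertion that incomparability of $|x|$ and $|y|$ together with $\mathrm{lcm}(|x|,|y|)$ dividing $pq^m$ ``only leaves the possibility'' that the two orders are powers of distinct primes; this is precisely the case distinction you refused to wave through, and it overlooks the possibility $|x|=pq^{f}$, $|y|=q^{f'}$ with $f'>f\geq 1$. So your proposal stalls at exactly the point where the paper's own argument is too quick, and the missing step cannot be supplied without strengthening the hypotheses (for example, forbidding elements of order $pq^m$ with $m\geq 2$, so that $|z|=pq$ and incomparability does force orders $p$ and $q$, or restricting attention to power-reduced paths as in Definition~\ref{def:power-reduced}, whose in-vertices have prime order by definition). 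As written, your sketch is incomplete, and its unresolved case is a genuine counterexample to the statement it was meant to prove.
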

\begin{proof}
	Since in- and out-vertices alternate in $\wp$ and $\wp$ has length at least four, each in-vertex in $\wp$ is joined with some out-vertex $g\in\wp^+$ that has degree two in $\wp$. The order of $g$ cannot have more than two prime divisors and it is not divisible by
	$p^2q^2$ for primes $p\neq q$ by assumption. By definition, the in-vertices joined with $g$, say $x$ and $y$, are contained in $\langle g\rangle$ but they are not adjacent with each other. Thus the orders of $x$ and $y$ must be incomparable and $\mathrm{lcm}(|x|,|y|)$ divides
	$pq^m$ for primes $p\neq q$ and some $m\in\mathbb{N}$, only leaving the possibility that $|x|$ and $|y|$ are (powers of) $q$ and $p$, respectively. Finally, again noting that $x$ and $y$ both lie in $\langle g\rangle$, we see that $|x|$ and $|y|$ must be powers of distinct primes. 
\end{proof}

\begin{lem}\label{lem:power_reduced}
	Let $G$ be a group such that $|G|$ is not divisible by $p^2q^2$ or $pqr$ for distinct primes $p$, $q$ and $r$.
	If $\wp$ is an induced path in $\pow(G)$ then there is a power-reduced path in $\pow(G)$
	of the same length as $\wp$, or there is a cycle in $\pow(G)$ whose length is at least $4$ but at most the length of $\wp$.
\end{lem}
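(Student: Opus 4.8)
The plan is to push every in-vertex of composite order down to prime order, one at a time: given an in-vertex $g$ with $|g|=p^k$ and $k\geq 2$, we replace $g$ by $g':=g^{p^{k-1}}$ and show that the new vertex set either again induces a path of the same length, now with one fewer in-vertex of composite order, or induces a cycle of length between $4$ and $|\wp|$. Iterating this step, which clearly must terminate, yields one of the two alternatives in the statement.

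First I would dispose of the short paths $|\wp|\leq 3$ by hand, since for these Lemma~\ref{lem:in_vertex_prime_power} is not available and, as there is no cycle of length in $[4,|\wp|]$, a power-reduced path of the same length must be produced. If $|\wp|\leq 2$ there is at most one in-vertex $y$, contained in $\langle x\rangle$ for its unique neighbour $x$, and any prime-order power of $y$ still lies in $\langle x\rangle$. If $|\wp|=3$ with middle vertex $z$ and endpoints $x,y$, then either $z$ is the in-vertex, so $z\in\langle x\rangle\cap\langle y\rangle$ and a prime-order power of $z$ does the job, or $x,y$ are the in-vertices, both lying in $\langle z\rangle$ with incomparable orders. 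In the latter case $|z|$ has exactly two prime divisors $p\neq s$ (with only one, all subgroup orders would be comparable) and is not divisible by $p^2s^2$, so, after possibly swapping $x$ and $y$, the prime $p$ divides $|x|$ more often than $|y|$ while $s$ divides $|y|$ more often than $|x|$; replacing $x$ by a power of order $p$ and $y$ by a power of order $s$ produces a power-reduced induced $3$-path.

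Now assume $L:=|\wp|\geq 4$. By Lemma~\ref{lem:in_vertex_prime_power}, every in-vertex of any induced path of length $L$ in $\pow(G)$ has prime-power order and consecutive in-vertices carry distinct primes. Choose an in-vertex $g=v_i$ with $|g|=p^k$, $k\geq 2$, set $g':=g^{p^{k-1}}$ (of order $p$) and form $\wp'$ from $\wp$ by substituting $g'$ for $v_i$. Every edge of $\wp$ survives in $\pow(G)[V(\wp')]$, since $g'\in\langle g\rangle$ still lies in the cyclic group generated by each neighbour of $v_i$; hence the only possibly new edges join $g'$ to some $v_j$ with $|j-i|\geq 2$. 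The key point — and the only real obstacle — is to exclude $|j-i|=2$: the vertices $v_{i\pm 2}$ and $g$ are consecutive in-vertices of $\wp$ (separated by the out-vertex $v_{i\pm 1}$), so by Lemma~\ref{lem:in_vertex_prime_power} their orders are powers of distinct primes; since $g'$, of order $p$, and $v_{i\pm 2}$ both lie in $\langle v_{i\pm 1}\rangle$ with coprime orders, they are non-adjacent in $\pow(G)$. Thus any new edge has the form $\{g',v_j\}$ with $|j-i|\geq 3$.

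If no new edge appears, $\wp'$ is an induced path of length $L$ with strictly fewer in-vertices of composite order, and I would repeat the step; since $\wp$ has finitely many in-vertices, all of finite order, this terminates in a power-reduced path of length $L$. If some new edge $\{g',v_j\}$ does appear, assume $j\leq i-3$ (the case $j\geq i+3$ is symmetric) and pick $j_0\leq i-3$ maximal with $g'$ adjacent to $v_{j_0}$. Then $(g',v_{j_0},v_{j_0+1},\dots,v_{i-1})$ is an induced cycle: the segment $v_{j_0},\dots,v_{i-1}$ of $\wp$ is induced, $g'$ is adjacent to its two ends ($v_{i-1}$ from $\wp$, $v_{j_0}$ by the new edge), and $g'$ is non-adjacent to each of $v_{j_0+1},\dots,v_{i-2}$ by maximality of $j_0$ together with the exclusion of $|j-i|=2$ above. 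Its length $i-j_0+1$ satisfies $4\leq i-j_0+1\leq i\leq L$, giving the cycle alternative. The main obstacle throughout is exactly that distance-$2$ exclusion, which is what prevents the reduction from yielding only a triangle rather than a genuine induced cycle on at least four vertices.
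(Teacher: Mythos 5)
Your proof is correct and takes essentially the same route as the paper: successively replace composite-order in-vertices by prime-order powers, use Lemma~\ref{lem:in_vertex_prime_power} to exclude a new edge at distance $2$ (the triangle obstruction), and observe that any other new edge produces an induced cycle of length between $4$ and $|\wp|$. Your separate treatment of paths on at most three vertices (where Lemma~\ref{lem:in_vertex_prime_power} is unavailable) and the explicit maximality choice of $j_0$ are slightly more careful than the paper's version, but the underlying argument is identical.
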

\begin{proof}
	By Lemma~\ref{lem:in_vertex_prime_power}, each $g\in\wp^-$ has prime-power order.
	Starting from $\wp$, we successively replace in-vertices by powers of prime order and argue that the resulting graph is still a path of length $|\wp|$. Let $\wp'$ be the graph obtained by $\wp$ after replacing one in-vertex $x\in\wp^-$ of order $q^m$ with $x':=x^{q^{m-1}}$.
	If $\wp'$ is not a path, then there must be new edges incident with $x'$. If there is no induced cycle of length at least $4$ in $\wp'$, then $x'$ must be joined 
	with another in-vertex $y$ such that $x$ and $y$ have a common neighbor $g\in\wp^+$. But according to Lemma~\ref{lem:in_vertex_prime_power}, 
	$x$ and $y$ have coprime orders, a contradiction. 
\end{proof}

In particular, we might assume that paths in the power graph of power-chordal groups come with the restricted structure described in the previous lemma. Recall that the power graph of a group $G$ is invariant under the action of $\aut(G)$. The idea is to consider long paths in $\pow(G)$ and shift them around via suitable automorphisms to either form cycles or otherwise obtain structural restrictions on the group itself.
\begin{lem}\label{lem:bound_path_len_by_max_len}

	Let $G$ be a group such that $|G|$ is not divisible by $p^2q^2$ or $pqr$ for distinct primes $p$, $q$ and $r$.
	Let $\wp=(\wp_1,\dots,\wp_{\ell})$ be a power-reduced path in $\pow(G)$ with $\wp_1\in\wp^-$ and assume
	that $\pow(G)$ does not contain cycles whose length is in $\{4,\dots,2\ell-6\}$. 
	Let $L$ be the maximal length of any path in $G$. Then $\wp_1$ normalizes each of the cyclic groups $\langle\wp_i\rangle$ with
	$i< l-\frac{L-3}{2}$.
\end{lem}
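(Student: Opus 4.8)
The plan is to argue by contradiction: suppose $\wp_1$ fails to normalize $\langle\wp_i\rangle$ for some $i<\ell-\frac{L-3}{2}$, and fix $i$ minimal with this property. The key tool is the map $\phi$ given by conjugation by $\wp_1$; it is an automorphism of $G$, hence acts on $\pow(G)$, and it fixes $\wp_1$. Since $\wp_1\in\wp^-$ and $\wp_2\in\wp^+$ we have $\wp_1\in\langle\wp_2\rangle$, and as $\langle\wp_2\rangle$ is abelian, $\phi$ fixes $\langle\wp_2\rangle$ elementwise; since also $\wp_3\in\langle\wp_2\rangle$, $\phi$ fixes $\wp_2$ and $\wp_3$, so in particular $i\geq 4$. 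By minimality of $i$, for every $j<i$ the automorphism $\phi$ maps $\langle\wp_j\rangle$ onto itself, so $\phi(\wp_j)\in\langle\wp_j\rangle$ is a power of $\wp_j$ and hence adjacent to (or equal to) $\wp_j$ in $\pow(G)$. Finally $\phi(\wp_i)\notin\langle\wp_i\rangle$ while it has the same order as $\wp_i$, so $\langle\phi(\wp_i)\rangle\neq\langle\wp_i\rangle$ and $\wp_i,\phi(\wp_i)$ are non-adjacent.

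Next I build a long path from the tail of $\wp$ and its $\phi$-image. Let $T=(\wp_i,\wp_{i+1},\dots,\wp_\ell)$, so that $\phi(T)=(\phi(\wp_i),\dots,\phi(\wp_\ell))$ is again an induced path. Because $\phi$ preserves the edge $\{\wp_{i-1},\wp_i\}$, the vertex $\phi(\wp_{i-1})$ is adjacent to $\phi(\wp_i)$, and by the previous paragraph it is adjacent to (or equal to) $\wp_{i-1}$. Splicing the reversed tail $(\wp_\ell,\dots,\wp_i)$, then $\wp_{i-1}$ and, if distinct, $\phi(\wp_{i-1})$, then $\phi(T)$, yields a walk $Q$ on at least $2(\ell-i)+3$ vertices in which consecutive vertices are adjacent. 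The hypothesis $i<\ell-\frac{L-3}{2}$ is exactly $2(\ell-i)+3>L$.

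Then I split into two cases. If $Q$ is an induced path, its length exceeds $L$, contradicting maximality of $L$. Otherwise $Q$ carries a chord. One checks that no chord can lie inside $T$ or inside $\phi(T)$ (these are induced subpaths), and that none can involve the splice vertices beyond the edges already used, by exploiting that $\phi$ fixes each $\langle\wp_j\rangle$ ($j<i$) setwise together with the alternating and power-reduced structure of $\wp$ (Lemmas~\ref{lem:paths_alternating} and~\ref{lem:in_vertex_prime_power}); hence every chord joins some $\wp_a$ with some $\phi(\wp_b)$. Taking such a chord with $a+b$ minimal, the segment of $Q$ from $\wp_a$ through the splice to $\phi(\wp_b)$, closed up by the chord, is an induced cycle — any interior chord would contradict minimality of $a+b$. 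Its length is at least $4$ (a shorter closed walk would force $a=b=i$, impossible since $\wp_i\not\sim\phi(\wp_i)$) and at most the number of vertices of $Q$, which for $i\geq 5$ is at most $2\ell-6$; this contradicts the assumption on cycle lengths. In the borderline case $i=4$, where $\phi$ also fixes $\wp_{i-1}=\wp_3$ and $Q$ has exactly $2\ell-5$ vertices, one either deletes the single extremal chorded vertex and re-extends, or checks that the only possible obstruction is a chord $\{\wp_\ell,\phi(\wp_\ell)\}$, which can be circumvented directly.

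The step I expect to be hardest is precisely this chord bookkeeping: confirming that $T$, $\phi(T)$ and the splicing region introduce no unintended adjacencies — this is where the alternating in/out structure, the power-reduced assumption, and the restriction on $|G|$ really get used — and then verifying that the cycle produced by a minimal chord has length in the forbidden window $\{4,\dots,2\ell-6\}$, including squeezing the extra vertex out of the construction in the tight small-$i$ case.
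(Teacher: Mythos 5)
Your overall strategy is the same as the paper's: take the minimal index $i$ at which conjugation by $\wp_1$ fails to normalize $\langle\wp_i\rangle$, splice the tail $(\wp_i,\dots,\wp_\ell)$ to its conjugate image through $\wp_{i-1}$, and conclude either a path longer than $L$ or an induced cycle whose length lies in $\{4,\dots,2\ell-6\}$. However, your chord bookkeeping contains a genuine error. Since adjacency in $\pow(G)$ depends only on the pair of generated cyclic subgroups, and since $\phi$ normalizes $\langle\wp_{i-1}\rangle$ by minimality of $i$, one gets $\wp_{i-1}\sim\phi(\wp_b)$ if and only if $\wp_{i-1}\sim\wp_b$; taking $b=i$ shows that $\wp_{i-1}$ is \emph{always} adjacent to $\phi(\wp_i)$, and symmetrically $\phi(\wp_{i-1})\sim\wp_i$. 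So whenever $\phi(\wp_{i-1})\neq\wp_{i-1}$, your walk $Q$ carries chords through the splice vertices, directly contradicting your claim that no chord ``can involve the splice vertices beyond the edges already used''; in particular, the cycle you close up from a chord $\{\wp_a,\phi(\wp_b)\}$ with $a+b$ minimal still contains the interior chords $\{\wp_{i-1},\phi(\wp_i)\}$ and $\{\wp_i,\phi(\wp_{i-1})\}$, hence is not induced, and your argument breaks exactly in this case. The repair is the paper's construction: splice through $\wp_{i-1}$ alone, using the adjacency $\wp_{i-1}\sim\phi(\wp_i)$ that your own computation provides; then $\wp_{i-1}$ is adjacent to no $\wp_a$ and no $\phi(\wp_b)$ with $a,b>i$, and the minimal-chord argument does produce an induced cycle.

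A second, smaller issue is the boundary case. With the single-vertex splice the walk has $2(\ell-i)+3\leq 2\ell-5$ vertices for every $i\geq 4$, and since induced cycles in $\pow(G)$ have even length (in- and out-vertices alternate around a cycle just as in Lemma~\ref{lem:paths_alternating}; equivalently, power graphs are perfect), any induced cycle obtained has length at most $2\ell-6$. Thus no separate treatment of $i=4$, nor your proposed ``delete the extremal chorded vertex and re-extend'' manoeuvre (which is not justified as stated), is needed. Finally, note that with your double splice the case ``$Q$ is an induced path'' can only occur when $\phi(\wp_{i-1})=\wp_{i-1}$; with the corrected splice this case is simply ``no chord exists'', yielding an induced path on $2(\ell-i)+3>L$ vertices and hence the intended contradiction with the maximality of $L$.
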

\begin{proof}
 We note that $\wp_1$ always normalizes $\langle x\rangle=\langle\wp_1\rangle$, $\langle\wp_2\rangle$ and $\langle\wp_3\rangle$ (by definition of the power graph,
	$x$ even centralizes these groups). If $x:=\wp_1$ does normalize
	$\langle \wp_i\rangle$ for $i<j$ but does not normalize $\langle \wp_j\rangle$, then the paths $(\wp_{j},\dots,\wp_{\ell})$ and $(\wp^x_{j},\dots,\wp^x_{\ell})$ either have no edges between them in $\pow(G)$, and then they can be combined to a path of length $2(\ell-j+1)+1$ via $\wp_{j-1}$, or they do have edges between them, and then, since we assume $\langle \wp_j\rangle\neq\langle \wp_j^x\rangle$, there must be an induced cycle on
	$\{\wp_i^y\mid j-1\leq i\leq \ell, y\in\{x,1\}\}$. The length of this cycle would be in $\{4,\dots,2\ell-6\}$, contradicting the assumptions of the lemma.
\end{proof}

This further restricts the structure of long paths in chordal power graphs, since for distinct primes $p$ and $q$ we have that a copy of $C_p$ can only normalize a copy of $C_q$ if $p$ divides $q-1$ or if it actually centralizes the copy of $C_q$. But the latter case only occurs among consecutive in-vertices, as the following lemma shows. 

\begin{lem}\label{lem:path_start_no_centralizer}
	Let $\wp$ be a power-reduced path of length $\ell$ in $\pow(G)$ and assume that $\wp$ is not contained in any induced cycle of $\pow(G)$. Then
	each $x\in\wp^-$ only centralizes elements in $\wp^-$ of order coprime to $|x|$ if they have distance $2$ from $x$.
\end{lem}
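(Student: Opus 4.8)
The plan is to argue by contradiction. Suppose some $x\in\wp^-$ centralizes an element $y\in\wp^-$ with $\gcd(|x|,|y|)=1$ but $y$ does not have distance $2$ from $x$ in $\wp$. Write $\wp=(\wp_1,\dots,\wp_\ell)$, $x=\wp_i$ and $y=\wp_j$. By Lemma~\ref{lem:paths_alternating} in- and out-vertices alternate along $\wp$, so the two in-vertices $\wp_i,\wp_j$ are at even distance; this distance is positive (else $\gcd(|x|,|y|)=|x|>1$), so after possibly reversing $\wp$ we may assume $i<j$ and $j-i\geq4$. Since $\wp$ is power-reduced, $|x|=p$ and $|y|=q$ for distinct primes $p,q$, hence $w:=xy$ has order $pq$, $\langle w\rangle\cong C_{pq}$ is cyclic, and $\langle x\rangle,\langle y\rangle$ are its unique subgroups of order $p$ and $q$. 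In particular $x,y\in\langle w\rangle$, so $w\sim\wp_i$ and $w\sim\wp_j$ in $\pow(G)$, while $\wp_i\not\sim\wp_j$ because they have distinct prime orders.

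The core step is to show that $w\notin\{\wp_i,\dots,\wp_j\}$ and that $w$ is adjacent to no interior vertex $\wp_k$ with $i<k<j$. If $w\sim\wp_k$ then $\langle\wp_k\rangle\subseteq\langle w\rangle$ or $\langle w\rangle\subseteq\langle\wp_k\rangle$; using that $\langle w\rangle$ is cyclic of order $pq$, in either case one finds that $\wp_k$ is adjacent in $\pow(G)$ to $\wp_i$ or to $\wp_j$, and since $\wp$ is an induced path this forces $k=i+1$ or $k=j-1$. Both are then excluded by a computation in $\langle w\rangle$. For $k=i+1$: if $\langle w\rangle\subseteq\langle\wp_{i+1}\rangle$ then $\wp_j=y\in\langle\wp_{i+1}\rangle$ gives $\wp_{i+1}\sim\wp_j$, impossible as they are at path-distance $\geq3$; so $\langle\wp_{i+1}\rangle\subseteq\langle w\rangle$, and as $\langle x\rangle\subseteq\langle\wp_{i+1}\rangle$ we get $\langle\wp_{i+1}\rangle\in\{\langle x\rangle,\langle w\rangle\}$. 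Since $\wp_{i+2}\in\langle\wp_{i+1}\rangle$ is a prime-order element, it then generates $\langle x\rangle=\langle\wp_i\rangle$ or $\langle y\rangle=\langle\wp_j\rangle$; the first contradicts $\wp_i\not\sim\wp_{i+2}$, the second contradicts $\wp_{i+2}\not\sim\wp_j$ (their positions differ by $j-i-2\geq2$). Throughout I use the path version of Lemma~\ref{lem:primes_alternate}---that consecutive in-vertices of a power-reduced path generate distinct prime-order subgroups---which is immediate since two such in-vertices lie in a common cyclic group. The case $k=j-1$ follows by reversing $\wp$, and $w\ne\wp_k$ for interior $k$ is handled the same way: then $\wp_k$ would be an out-vertex whose two in-vertex neighbours generate $\langle x\rangle=\langle\wp_i\rangle$ and $\langle y\rangle=\langle\wp_j\rangle$, again contradicting inducedness.

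Granting this, the set $\{\wp_i,\wp_{i+1},\dots,\wp_j,w\}$ induces in $\pow(G)$ precisely a cycle on $j-i+2\geq6$ vertices, whose edges are the edges of $\wp$ among $\wp_i,\dots,\wp_j$ together with $\{w,\wp_i\}$ and $\{w,\wp_j\}$. This induced cycle has length $\geq4$ and contains the sub-path $(\wp_i,\dots,\wp_j)$ of $\wp$, contradicting the hypothesis that $\wp$ lies in no induced cycle of $\pow(G)$. Hence $x$ can centralize an element of $\wp^-$ of coprime order only if it has distance $2$ from $x$.

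The main obstacle is the chord analysis in the second paragraph: the whole point is that $w$ sees only $\wp_i$ and $\wp_j$ along the sub-path. (In the excluded case of distance $2$ one has $j=i+2$ and $w$ also lies in the common cyclic overgroup $\langle\wp_{i+1}\rangle$, so no induced cycle appears---this is exactly why distance $2$ is exceptional.) Ruling out the two boundary positions $k=i+1$ and $k=j-1$ cleanly requires exploiting power-reducedness (interior in-vertices have prime order, hence lie among the two prime-order subgroups of $\langle w\rangle$) and inducedness of $\wp$ (in-vertices at path-distance $\geq2$ are non-adjacent) simultaneously, with $j-i\geq4$ ensuring $\wp_{i+2}$ and $\wp_{j-2}$ stay away from the opposite endpoint.
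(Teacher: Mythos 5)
Your proposal is correct and takes essentially the same route as the paper: both adjoin the product $w=xy$ to the segment of $\wp$ between the two commuting in-vertices and show that $w$ has no further adjacencies there, so that the segment together with $w$ induces a cycle of length at least $4$, contradicting the hypothesis. Your chord analysis inside $\langle w\rangle\cong C_{pq}$ is more detailed than the paper's one-line justification of the same step, but the underlying argument is identical.
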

\begin{proof}
	Consider $x,y\in\wp^-$ such that $|x|$ and $|y|$ are coprime, centralize each other, and have distance greater than $2$ in
	$\wp$. Consider the vertex $xy$ in $\pow(G)$. If $xy$ is joined with some path-vertex $\wp_i\in\wp\setminus\{x,y\}$, then 
	we have $\wp_i\in\wp^+$ and $\wp_i$ is also joined with $x$ and $y$ by definition, contradicting the distance assumption.
	But otherwise $\wp\cup\{xy\}$ forms an induced cycle, contradicting the assumption on cycles in $\pow(G)$.
\end{proof}

We can now bound the length of a longest path in a chordal power graph in terms of the number of prime divisors of the group order.
\begin{thm}\label{thm:bound_max_len}
	Let $G$ be a finite power-chordal group. If $\wp$ is an induced path on $L$ vertices in $\pow(G)$, then $L\leq 19$. In particular, if $G$ is any finite group, then $\pow(G)$ is chordal if and only if it neither contains induced cycles of length at most $20$ nor contains an induced path of length $20$.
\end{thm}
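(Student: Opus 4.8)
The plan is to bound directly the length of a longest induced path $\wp=(\wp_1,\dots,\wp_L)$ in $\pow(G)$. As $G$ is power-chordal, no element order is divisible by $p^2q^2$ or by $pqr$, and $\pow(G)$ has no induced cycle of length $\ge 4$ at all; hence the ``alternative'' in Lemma~\ref{lem:power_reduced} never occurs and we may take $\wp$ power-reduced. Then its in-vertices have prime order and consecutive in-vertices carry distinct primes (Lemma~\ref{lem:in_vertex_prime_power}). If $\wp$ has only out-vertex endpoints we delete them, changing $L$ by at most $2$ and arranging $\wp_1\in\wp^-$. Since $\pow(G)$ is cycle-free, Lemma~\ref{lem:bound_path_len_by_max_len} applies (vacuously on its cycle hypothesis) to $\wp$ and to every sub-path $(\wp_j,\dots,\wp_L)$ that begins at an in-vertex, yielding that each in-vertex $\wp_j$ normalizes $\langle\wp_i\rangle$ for all $i$ with $j\le i<\tfrac{L+3}{2}$; reading $\wp$ from the other end gives the mirror statement. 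In particular, within the ``controlled'' initial segment $\{\,i<\tfrac{L+3}{2}\,\}$ — which contains about $L/4$ in-vertices — normalization propagates from earlier to later in-vertices, and likewise near the tail.

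Next I would extract arithmetic and structure from this, using Lemma~\ref{lem:path_start_no_centralizer} applied to $\wp$. Take in-vertices $\wp_j,\wp_i$ in the controlled segment with $j<i$. Their path-distance is at least $2$, and equals $2$ exactly when they are consecutive in-vertices, in which case their primes differ and no further restriction is forced. If the distance is at least $4$ and the primes $p_j,p_i$ differ, then $\wp_j$ cannot centralize $\langle\wp_i\rangle$ (Lemma~\ref{lem:path_start_no_centralizer}), so it acts by a non-trivial automorphism of $C_{p_i}$, forcing $p_j\mid p_i-1$. If the distance is at least $4$ and $p_j=p_i$, then $\wp_j$ can normalize $\langle\wp_i\rangle\cong C_{p_j}$ only trivially, so $\wp_j$ and $\wp_i$ commute and, being non-adjacent, generate a copy of $C_{p_j}\times C_{p_j}$; consequently the in-vertices over any fixed prime $p$ appearing in the controlled segment pairwise commute and generate an elementary abelian $p$-group $E_p$, and a prime-$p$ in-vertex centralizes a coprime prime-$p'$ in-vertex only when the two are at distance exactly $2$. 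Together with the prohibition on elements of order $pqr$, these constraints sharply restrict the sequence of in-vertex primes.

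The heart of the argument — and the step I expect to be the main obstacle — is to show that for $L\ge 20$ the resulting configuration violates chordality, thereby giving $L\le 19$. The levers are that in a power-chordal group the hypothesis of Lemma~\ref{lem:conjug_cyclic_property_sufficient} fails for every commuting coprime pair, and the configuration of Lemma~\ref{lem:char_prop_C4} cannot occur. For a prime-$p$ in-vertex $x$ and a coprime prime-$p'$ in-vertex $y$ at distance $2$ one has $xy=yx$; using that $E_{p'}\le C_G(y)$ does not normalize $\langle x\rangle$ (because $p'\nmid p-1$, while some member of $E_{p'}$ fails to centralize $x$ by the distance rule above) one deduces $\langle x\rangle\not\trianglelefteq C_G(y)$, hence $\langle y\rangle\trianglelefteq C_G(x)$. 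Propagating this relation along the path and counting how many mutually incompatible instances a path of length $\ge 20$ would force, one arrives at two distinct cyclic subgroups of prime-power order meeting non-trivially inside the centralizer of a common element — an induced $4$-cycle by Lemma~\ref{lem:char_prop_C4} — or at an induced cycle of larger length via Lemma~\ref{lem:conjug_cyclic_property_sufficient}, a contradiction; the bookkeeping pins the threshold at $19$. Finally, the displayed reformulation is immediate: a chordal power graph has no induced cycle of length $4,\dots,20$ and, by the bound just proved, no induced path on $20$ vertices; conversely, if $\pow(G)$ has neither of these, then it has no induced cycle of length $\ge 4$ at all, since an induced cycle of length $>20$ contains an induced path on $20$ consecutive vertices — so $\pow(G)$ is chordal.
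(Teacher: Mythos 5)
Your setup is sound and matches the paper's: take a longest induced path, pass to a power-reduced path via Lemma~\ref{lem:power_reduced} (the cycle alternative being excluded by chordality), trim to in-vertex endpoints, and use Lemma~\ref{lem:bound_path_len_by_max_len} (with its cycle hypothesis vacuous) to get that $\wp_1$ normalizes the cyclic groups generated by roughly the first half of the path; the closing reformulation (a long induced cycle contains an induced $20$-vertex path) is also correct. But the core of the theorem --- the step that actually produces the number $19$ --- is missing. Saying that ``propagating this relation along the path and counting how many mutually incompatible instances a path of length $\ge 20$ would force \dots the bookkeeping pins the threshold at $19$'' is not an argument, and the constraints you have extracted do not by themselves bound $L$: divisibility conditions of the form $p_j\mid p_i-1$ admit arbitrarily long chains of primes, and your alternative branch (same-prime in-vertices commuting and generating an elementary abelian $E_p$) is a configuration you never refute. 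Moreover, the sketch of the ``heart'' contains unjustified assertions: you invoke $p'\nmid p-1$ with no source, and you assume a member of $E_{p'}$ lying in $C_G(y)$ fails to centralize $x$, which again needs proof.

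What the paper does at exactly this point is a specific chain you would need to reproduce (or replace): (i) by maximality of the path, $\langle\wp_3\rangle$ is normal in $C_G(\wp_1)$, since otherwise a second cyclic group of order $|\wp_3|$ centralizing $\wp_1$ would extend the path by two vertices (Lemma~\ref{lem:path_start_no_centralizer} guarantees it is not already on the path); (ii) if some $\wp_i$ with $2\le i\le n$ shares the prime of $\wp_1$, then $\wp_3$ and $\wp_i$ must centralize each other, forcing $i=5$, and then $\wp_1$ and $\wp_5$ induce fixed-point-free automorphisms of the same order on the cyclic group $\langle\wp_n\rangle$, whose automorphism group is cyclic; this yields an element $g\in\langle\wp_1,\wp_5\rangle$ centralizing both $\wp_n$ and $\wp_3$, hence an induced cycle, a contradiction --- so $|\wp_1|$ occurs only once in the first half; (iii) a fixed-point argument with the non-trivial semidirect product $\langle\wp_5\rangle\rtimes\langle\wp_1\rangle$ acting on $\langle\wp_n\rangle$ forces $|\wp_5|=|\wp_n|$; and (iv) repeating the argument with $\wp_7$ in place of $\wp_5$ and applying Lemma~\ref{lem:path_start_no_centralizer} forces $n\in\{7,9\}$, whence $\ell\le 17$ and $L\le 19$. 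None of steps (i)--(iv) appear in your proposal, and without them (or a genuinely different quantitative mechanism) no finite bound, let alone $19$, follows from what you have established.
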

\begin{proof}
	Let $L$ be the length of a longest path in $\pow(G)$. Assume $L$ is at least $20$ and let $\wp$ be a path of maximal length in $\pow(G)$.
	By Lemma~\ref{lem:power_reduced}, we may assume that $\wp$ is power-reduced. By omitting at most two vertices from $\wp$, we may further assume that $\wp$ starts with an element $x\in\wp^-$ and also ends with an element in $\wp^-$. Denote the length of the resulting path (we simply call $\wp$ again) by $\ell$ and denote the $i$-th vertex of $\wp$ by $\wp_i$ (starting from one of the end-points of $\wp$).
	Set $n:=\lceil \frac{\ell}{2}\rceil$, so $n\geq \frac{L-1}{2}$ and since $\wp$ starts and ends in in-vertices, it holds that $\wp_n$ is
	again an in-vertex. By Lemma~\ref{lem:bound_path_len_by_max_len}, $x$ normalizes all cyclic subgroups generated by elements in the sub-path $(\wp_1,\dots,\wp_n)$. By (almost) maximality of $\wp$ it follows that $\langle \wp_3\rangle$ is normal in $C_G(x)$, otherwise
	there would be another cyclic group $\langle y\rangle$ of order $|\wp_3|$ centralizing $x$, but $|\wp_3|$ is a prime distinct from $|x|$, and thus $\wp$ could be extended beyond $x$ by two more vertices ($y$ cannot be contained in $\wp$ by Lemma~\ref{lem:path_start_no_centralizer}), which is impossible by the choice of $\wp$. Now assume that some element
	$\wp_i$ with $2\leq i\leq n$ has order $|x|$. Note that then $i\geq 5$ by the structure of $\wp$. Then $x$ normalizes $\langle \wp_i\rangle$, thus it centralizes $\wp_i$ and in turn, $\wp_i$ normalizes the unique cyclic group of order $|\wp_3|$ in $C_G(x)$. By Lemma~\ref{lem:bound_path_len_by_max_len}, $\wp_3$ also normalizes $\langle \wp_i\rangle$, which is only possible if $\wp_3$ and $\wp_i$ centralize each other. By Lemma~\ref{lem:path_start_no_centralizer}, $i=5$ follows. In particular, since $n>7$, the in-vertex $\wp_n$ does not have order $|x|$. But then $\wp_1$ and $\wp_5$ both induce fixed point free automorphisms of the same order on $\langle \wp_n\rangle$, which has a cyclic automorphism group. Thus, some $g\in\langle\wp_1,\wp_5\rangle$ centralizes $\wp_n$ and by the structure of $\wp$, $g$ also centralizes $\wp_3$. We can thus replace $\wp_1$ with $g$ and obtain a cycle in $\pow(G)$, induced on $(g,\wp_2,\dots,\wp_n)$, contradicting the assumption that $G$ is power-chordal.
	So we proved that in fact $x=\wp_1$ is the only vertex $\wp_i$ with $i\leq n$ of order $|x|$. Then, since orders of consecutive in-vertices are distinct, the primes $|\wp_1|$, $|\wp_3|$, and $|\wp_5|$ are pairwise distinct. If $|\wp_n|\neq |\wp_5|$, then
	the non-trivial semi-direct product $\langle \wp_5\rangle\rtimes\langle \wp_1\rangle$ acts without fixed points on the cyclic group
	generated by $\wp_n$, a contradiction. It follows that $|\wp_5|=|\wp_n|$ holds. We can repeat the exact same argument with
	$\wp_1$ and $\wp_7$, unless $\wp_7$ centralizes $\wp_n$. But then Lemma~\ref{lem:path_start_no_centralizer} implies $n\in\{7,9\}$.
	In conclusion, $\ell\leq 17$ and $L\leq 19$.
\end{proof}

We finish the paper by noting three consequences of the discussion above. First, we can strengthen the statement of the previous theorem for more restrictive group classes.

\begin{cor}
	Let $G$ be a power-chordal group of order $p^aq^b$ with distinct primes $p$ and $q$ and $a,b\in\mathbb{N}$.
	If $\wp$ is a path in $\pow(G)$ of length $L$, then $L\leq 15$.
\end{cor}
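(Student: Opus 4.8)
The plan is to rerun the proof of Theorem~\ref{thm:bound_max_len} and exploit that $|G|=p^aq^b$ is divisible by exactly two primes. Suppose for contradiction that $\pow(G)$ contains an induced path of length at least $16$, and let $\wp$ be one of maximal length $L\ge 16$. As $G$ is power-chordal, $\pow(G)$ has no induced cycles, so Lemma~\ref{lem:power_reduced} lets us take $\wp$ power-reduced of the same length, and discarding a bounded number of end vertices we may assume $\wp=(\wp_1,\dots,\wp_\ell)$ begins and ends with in-vertices (and, trimming a little further if needed, that its middle vertex is again an in-vertex). By Lemma~\ref{lem:in_vertex_prime_power} the in-vertices $\wp_1,\wp_3,\wp_5,\dots$ have prime order, and by the argument of Lemma~\ref{lem:primes_alternate} consecutive ones have distinct order; since only $p$ and $q$ occur, their orders alternate. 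Normalising $|\wp_1|=p$, we obtain $|\wp_5|=|\wp_9|=\dots=p$ and $|\wp_3|=|\wp_7|=\dots=q$.

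This is exactly the configuration the proof of Theorem~\ref{thm:bound_max_len} works to exclude: there the endgame needs $|\wp_1|,|\wp_3|,|\wp_5|$ to be three distinct primes, whereas here $|\wp_5|=|\wp_1|$. So we are inside the subargument of that proof opening with ``assume that some $\wp_i$ with $2\le i\le n$ has order $|\wp_1|$''. Recalling that $\langle\wp_3\rangle$ is normal in $C_G(\wp_1)$ and combining Lemma~\ref{lem:bound_path_len_by_max_len} with Lemma~\ref{lem:path_start_no_centralizer}, that subargument shows any such index $i$ sitting inside the cyclic-normalising ranges of both $\wp_1$ and $\wp_3$ is forced to equal $5$, and produces a contradiction as soon as a \emph{second} such index is available (via the induced cycle $(g,\wp_2,\dots,\wp_n)$ if $|\wp_n|\ne|\wp_1|$, and by collapsing $\wp_n$ onto index $5$ if $|\wp_n|=|\wp_1|$). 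In our situation $\wp_9$ --- and, according to its parity, the middle vertex $\wp_n$ --- is a further in-vertex of order $|\wp_1|$; hence the only way to dodge the contradiction is for the index $9$ to lie \emph{outside} the range $\{\, i : i<\ell-\tfrac{L-3}{2} \,\}$ guaranteed by Lemma~\ref{lem:bound_path_len_by_max_len} (and likewise for $n$). Turning this escape clause into an inequality and using that $\ell$ can differ from $L$ only by the few vertices that were trimmed, one is left with $\ell\le 13$, i.e.\ $L\le 15$.

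The conceptually trivial but genuinely delicate part is this last step: one must compare the range $i<\ell-\tfrac{L-3}{2}$ --- and its analogue obtained by applying Lemma~\ref{lem:bound_path_len_by_max_len} to the subpath from $\wp_3$ --- against the fixed index $9$ and against $n=\lceil\ell/2\rceil$, while tracking how $\ell$ relates to $L$ in the handful of cases that arise from whether the endpoints of a longest path are in- or out-vertices and from making the middle vertex an in-vertex. In each of these cases the forbidden index falls inside the range once $L\ge 16$, so the cycle of Theorem~\ref{thm:bound_max_len} materialises; this bookkeeping, rather than any new group theory, is the real content, and no input beyond Theorem~\ref{thm:bound_max_len} and Theorem~\ref{thm:pocho_nilpot} is needed.
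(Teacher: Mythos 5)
Your core idea is the paper's: specialize the proof of Theorem~\ref{thm:bound_max_len} to the two-prime situation, note that in a power-reduced path the in-vertex orders must alternate between $p$ and $q$, so $|\wp_5|=|\wp_1|$ (and the order $|\wp_1|$ keeps recurring), and play this off against the fact, established inside that proof, that $\wp_1$ is the \emph{only} vertex $\wp_i$ with $i\leq n$ of order $|\wp_1|$. The paper's own proof is exactly this observation and nothing more: that uniqueness statement is proved under the hypothesis $n>7$, so with only two primes available one concludes $n\leq 7$, hence $\ell\leq 13$ (the trimmed path has odd length since it starts and ends in in-vertices) and $L\leq\ell+2\leq 15$.

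The gap is in the step you yourself flag as ``the real content'' and then do not carry out. You locate the threshold in the wrong place: you argue that the only escape is for the index $9$ (or $n$) to fall outside the normalizing range $i<\ell-\tfrac{L-3}{2}$ of Lemma~\ref{lem:bound_path_len_by_max_len}, and assert that this inequality yields $\ell\leq 13$. It does not: with $\ell\geq L-2$ one has $\ell-\tfrac{L-3}{2}\geq\tfrac{\ell+1}{2}$, so requiring $9\geq\ell-\tfrac{L-3}{2}$ only forces roughly $\ell\leq 17$, i.e.\ a bound near $L\leq 19$ or $20$, not $15$. The number $15$ does not come from comparing the fixed index $9$ with the normalizing range at all; it comes from the hypothesis $n>7$ under which the uniqueness-of-$|\wp_1|$ subargument (the part ending with the cycle on $(g,\wp_2,\dots,\wp_n)$, via Lemma~\ref{lem:path_start_no_centralizer}) is valid. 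Once two primes force a repetition of the order $|\wp_1|$ among $\wp_3,\wp_5,\dots,\wp_n$ whenever $n>7$, the only consistent conclusion is $n\leq 7$, and the arithmetic $n\leq 7\Rightarrow\ell\leq 13\Rightarrow L\leq 15$ is immediate. So your proposal needs its endgame replaced by this (simpler) threshold argument; as written, the claimed inequality chain would not produce the stated bound.
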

\begin{proof}
	In the proof of Theorem~\ref{thm:bound_max_len}, we deduced that the order of $\wp_1$ is unique among the in-vertices of $\wp$, provided that $n>7$ holds. But the remaining in-vertices have prime order and consecutive in-vertices cannot have the same order.
	Thus, assuming that $|G|$ has only two distinct prime divisors, we conclude that $L$ is at most $15$.  
\end{proof}

The next observation follows from the proof of Lemma~\ref{lem:bound_path_len_by_max_len}.
\begin{cor}
	Let $G$ be power-chordal and let $\wp$ be a power-reduced path in $G$. If $\wp$ has maximal length among all paths of $\pow(G)$, then the group generated by $\wp$ has a cyclic normal subgroup.
\end{cor}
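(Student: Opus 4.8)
The plan is to use that a power\-chordal $G$ has no induced cycle on at least four vertices and no element whose order is divisible by $p^2q^2$ or $pqr$, so that Lemmas~\ref{lem:in_vertex_prime_power} and~\ref{lem:bound_path_len_by_max_len} apply, and to invoke Lemma~\ref{lem:bound_path_len_by_max_len} not for $\wp$ itself but for the two sub\-paths of $\wp$ hanging off a fixed in\-vertex. Write $\wp=(\wp_1,\dots,\wp_\ell)$ and $U:=\langle\wp_1,\dots,\wp_\ell\rangle$. The cases $\ell\le 3$ are immediate: there $U$ is cyclic, or else the middle vertex $\wp_2$ is an in\-vertex and its two neighbours lie in $C_G(\wp_2)$, so that $\langle\wp_2\rangle$ is central in $U$. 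So assume $\ell\ge 4$ and, for clarity, treat first the case in which both endpoints of $\wp$ are in\-vertices (which forces $\ell$ odd). Put $m:=(\ell+1)/2$. I would aim to show $\langle\wp_m\rangle\trianglelefteq U$; since $\langle\wp_m\rangle$ is cyclic, this proves the corollary.

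First I would bring $\wp$ into a normal form in which every out\-vertex equals the product of its two in\-vertex neighbours. If $\wp_j\in\wp^+$ has in\-vertex neighbours $x=\wp_{j-1}$ and $y=\wp_{j+1}$, then $x,y\in\langle\wp_j\rangle$ and, by Lemma~\ref{lem:in_vertex_prime_power}, $|x|$ and $|y|$ are distinct primes, so $xy$ has order $|x|\,|y|$ and $x,y\in\langle xy\rangle\le\langle\wp_j\rangle$. Replacing $\wp_j$ by $xy$ introduces no chord: an edge from $xy$ to some $\wp_k$ with $k\notin\{j-1,j,j+1\}$ would force one of $x$, $y$, $\wp_j$ to be adjacent to $\wp_k$ at path\-distance at least $2$ (here one uses that out\-vertices have composite order to exclude the borderline indices $k=j\pm 2$), contradicting that $\wp$ is induced. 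These replacements preserve the in\-vertices, the length, and the maximality of $\wp$; afterwards every out\-vertex lies in the subgroup generated by its two neighbouring in\-vertices.

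The heart of the argument is then to show that every in\-vertex $\wp_i$ normalises $\langle\wp_m\rangle$. For a fixed in\-vertex $\wp_i$ I would apply Lemma~\ref{lem:bound_path_len_by_max_len} to each of the two sub\-paths $(\wp_i,\wp_{i+1},\dots,\wp_\ell)$ and $(\wp_i,\wp_{i-1},\dots,\wp_1)$: both are power\-reduced, both begin at the in\-vertex $\wp_i$, the no\-cycles hypothesis of the lemma holds vacuously, and --- crucially --- since $\wp$ already realises the maximal path length, the constant ``$L$'' of that lemma equals $\ell$. Unwinding the bound ``index $<$ (sub\-path length) $-\tfrac{L-3}{2}$'' in terms of positions along $\wp$, the forward sub\-path shows $\wp_i$ normalises $\langle\wp_t\rangle$ for all $t$ with $i\le t\le m$, and the backward one for all $t$ with $m\le t\le i$; in either case $t=m$ is covered, so $\wp_i\in N_G(\langle\wp_m\rangle)$. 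Since $N_G(\langle\wp_m\rangle)$ is a subgroup, each out\-vertex --- being a product of two in\-vertices lying in $N_G(\langle\wp_m\rangle)$ --- also normalises $\langle\wp_m\rangle$. Hence all generators of $U$ normalise $\langle\wp_m\rangle$, and $\langle\wp_m\rangle\trianglelefteq U$.

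It remains to remove the assumption that both endpoints are in\-vertices (which also covers all even $\ell$); here the central index is chosen analogously. If an endpoint $\wp_1$ is an out\-vertex it cannot be reduced as in the second step, but one can still locate $\wp_1$ in the normaliser of the chosen central cyclic subgroup: when $|\wp_1|$ is not a prime power, write $\langle\wp_1\rangle=\langle\wp_2\rangle\times\langle b\rangle$ with $\wp_2\in\wp^-$ of prime order and $b$ of another prime order; if $\langle b\rangle$ were contained in no $\langle\wp_k\rangle$ with $k\ge 3$, then prepending $b$ would extend $\wp$, contradicting maximality (this is exactly the extendability dichotomy used in the proof of Lemma~\ref{lem:bound_path_len_by_max_len}), so $\langle b\rangle\le\langle\wp_k\rangle$ for an interior out\-vertex $\wp_k$ --- with Lemma~\ref{lem:path_start_no_centralizer} used to ensure that $k$ may be taken interior --- whence $b$, and then $\wp_1\in\langle\wp_2,b\rangle$, lies in the relevant normaliser; the prime\-power case and the case $|\wp_1|=p^cq$ with $c\ge 2$ are handled by the same combination of extendability and Lemma~\ref{lem:path_start_no_centralizer}. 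I expect the main obstacle to be precisely this endpoint bookkeeping, together with making the index arithmetic in the key step accurate enough that the forward and backward ranges are guaranteed to overlap at the central index; once these are set up, what remains is a routine subgroup computation.
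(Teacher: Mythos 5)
Your central mechanism is sound and is exactly what the paper's one-line justification (``follows from the proof of Lemma~\ref{lem:bound_path_len_by_max_len}'') points at: applying the longer-path-or-cycle dichotomy to the two sub-paths emanating from an in-vertex, with the constant $L$ equal to $\ell$ by maximality, does show that every in-vertex of $\wp$ normalizes a central cyclic subgroup $\langle\wp_m\rangle$ (your forward range $t<\tfrac{\ell+3}{2}$ and backward range $t>\tfrac{\ell-1}{2}$ always overlap in a central index). The genuine gap is in your treatment of the out-vertices. The corollary concerns the group $U$ generated by the \emph{original} path, and an out-vertex $\wp_j$ may have order $p^aq$ with $a\geq 2$ (power-reducedness constrains only in-vertices), so $\langle\wp_{j-1}\wp_{j+1}\rangle$ can be a proper subgroup of $\langle\wp_j\rangle$. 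Your ``normal form'' replaces $\wp_j$ by $\wp_{j-1}\wp_{j+1}$ and hence may replace $U$ by a proper subgroup; showing that the modified generators lie in $N_G(\langle\wp_m\rangle)$ does not show that the original $\wp_j$ does, and a cyclic normal subgroup of the smaller group need not be normal in $U$, so the statement as given is not proved. The same defect reappears at an out-vertex endpoint of order $p^cq$ with $c\geq 2$: the extendability argument together with Lemma~\ref{lem:path_start_no_centralizer} only places prime-order powers of $\wp_1$ in the normalizer, not the full $p^c$-part, so that case is asserted rather than handled.

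The repair is to use the \emph{proof} of Lemma~\ref{lem:bound_path_len_by_max_len} rather than its statement: run the conjugation dichotomy with an arbitrary vertex $v=\wp_j$ of $\wp$ (out-vertices included) as the conjugating element. The induction start needs only that $v$ centralizes $\langle\wp_j\rangle$ and $\langle\wp_{j\pm1}\rangle$, which holds for out-vertices since their neighbours are their powers; if $v$ normalized $\langle\wp_i\rangle$ for all indices strictly between $j$ and $t$ but not $\langle\wp_t\rangle$, conjugating the tail beyond $\wp_t$ by $v$ yields either an induced path of length $2(\ell-t+1)+1>\ell$, contradicting maximality, or an induced cycle, contradicting chordality, exactly as in your in-vertex computation. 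Hence every vertex of the original $\wp$ normalizes $\langle\wp_m\rangle$, giving $\langle\wp_m\rangle\trianglelefteq U$ directly, with no replacement step and no separate endpoint bookkeeping.
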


Finally, note that the special structure of power-reduced paths establishes a connection between paths
in the power graph and special paths in the commuting graph. Indeed, if $\wp$ is a power-reduced path in $\pow(G)$ then the induced subgraph of $\mathrm{Com}(G)$ induced on $\wp^-$
	is a path in which consecutive vertices have distinct prime orders.
\begin{cor}
	If $G$ is power-chordal and $\mathrm{Com}(G)$ contains an induced path of length $L$ in which consecutive vertices have distinct prime orders, then $L\leq 9$.
\end{cor}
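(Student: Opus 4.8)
The plan is to reduce the statement about $\mathrm{Com}(G)$ directly to Theorem~\ref{thm:bound_max_len} via the correspondence recorded in Lemma~\ref{lem:connection_pow_com}. Suppose $\mathrm{Com}(G)$ contains an induced path $\wp = (x_1, \dots, x_L)$ in which consecutive vertices $x_i, x_{i+1}$ have distinct prime orders. The first step is to build a power-reduced path in $\pow(G)$ from this data: by Lemma~\ref{lem:connection_pow_com}, joining consecutive $x_i$ and $x_{i+1}$ via the out-vertex $x_ix_{i+1}$ produces a power-reduced path in $\pow(G)$ on the vertex set $\{x_1, x_1x_2, x_2, x_2x_3, \dots, x_{L-1}x_L, x_L\}$, which has $2L-1$ vertices.

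Next I would invoke Theorem~\ref{thm:bound_max_len}: since $G$ is power-chordal, any induced path in $\pow(G)$ has at most $19$ vertices, so $2L-1 \leq 19$, giving $L \leq 10$. To get the sharper bound $L \leq 9$, I would look more carefully at the proof of Theorem~\ref{thm:bound_max_len}, where after trimming to a path starting and ending in in-vertices of length $\ell$ one deduces $\ell \leq 17$; here the path $(x_1, x_1x_2, \dots, x_L)$ already starts and ends in in-vertices (the $x_i$ are exactly the in-vertices, by construction), so no trimming is needed and we directly get $2L - 1 \leq 17$, hence $L \leq 9$.

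The main thing to be careful about is the bookkeeping on parity and on which vertices are in-vertices: one must confirm that in the path produced by Lemma~\ref{lem:connection_pow_com} the endpoints $x_1$ and $x_L$ are genuinely in-vertices (they lie in the cyclic groups generated by their unique neighbours $x_1x_2$ and $x_{L-1}x_L$ respectively), so that the bound $\ell \le 17$ from the body of the proof of Theorem~\ref{thm:bound_max_len} applies without the loss of two vertices that is incurred in general. One also needs that the resulting path is genuinely induced in $\pow(G)$ and power-reduced, but both are exactly the content of Lemma~\ref{lem:connection_pow_com} together with the hypothesis that the $x_i$ have prime order. I do not expect any serious obstacle; the only subtlety is making sure the constant comes out as $9$ rather than $10$, which hinges on reading off the intermediate bound $\ell \le 17$ from the proof of Theorem~\ref{thm:bound_max_len} rather than its weaker stated consequence $L \le 19$.
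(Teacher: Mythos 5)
Your route is exactly the intended one (the paper leaves this corollary without an explicit proof): use Lemma~\ref{lem:connection_pow_com} to turn the Com-path $(x_1,\dots,x_L)$ into an induced, power-reduced path on the $2L-1$ vertices $x_1,x_1x_2,x_2,\dots,x_{L-1}x_L,x_L$, whose endpoints are in-vertices, and then beat the stated bound $L\le 19$ of Theorem~\ref{thm:bound_max_len} by invoking the sharper count $\ell\le 17$ for power-reduced paths with in-vertex endpoints. Your bookkeeping ($2L-1$ vertices, endpoints are in-vertices, the $x_i$ are precisely $\wp^-$) is right, and the weak version of your argument correctly gives $L\le 10$ from the theorem as stated.

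The one genuine gap is in the phrase ``we directly get $2L-1\le 17$.'' The bound $\ell\le 17$ is not a free-standing statement about arbitrary power-reduced paths with in-vertex endpoints: in the proof of Theorem~\ref{thm:bound_max_len} it is derived for (a trimmed version of) a path of \emph{maximal} length, and maximality is used essentially --- both in the step showing $\langle\wp_3\rangle\trianglelefteq C_G(x)$ (``otherwise $\wp$ could be extended beyond $x$'') and in Lemma~\ref{lem:bound_path_len_by_max_len}, whose normalization range depends on the maximal path length $L$. Your constructed path need not be maximal, so the intermediate bound cannot simply be quoted. The fix is short but should be said: if $L\ge 10$, the constructed path has at least $19$ vertices, and since Theorem~\ref{thm:bound_max_len} caps induced paths at $19$ vertices, it has exactly $19$ and is itself a longest path in $\pow(G)$, already power-reduced with in-vertex endpoints (so $\ell=19$, $n=10$). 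One then reruns the theorem's argument with $\ell=L=19$ rather than under its opening assumption $L\ge 20$: the normalization ranges from Lemma~\ref{lem:bound_path_len_by_max_len} still cover $i\le 10=n$ (also for the subpaths starting at $\wp_3$, $\wp_5$, $\wp_7$), $n>7$ holds, and the endgame forces $n\in\{7,9\}$, contradicting $n=10$. This yields $L\le 9$; without this maximality step your citation of $\ell\le 17$ is exactly the point where the argument is incomplete, and it is the only place where the corollary's content exceeds the trivial $L\le 10$.
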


\section{Acknowledgments}

The research leading to these results has received funding
from the European Research Council (ERC) under the European
Union’s Horizon 2020 research and innovation program
(EngageS: grant agreement No. 820148) and from the German Research Foundation DFG (SFB-TRR 195 ``Symbolic Tools in Mathematics and their Application''). 

We thank Luke Morgan for helpful discussions regarding Lemma~\ref{spl3_levi_factor}.

\section{Statements and declarations}
We declare that there are no competing interests or conflicts of interests.
\section{Data availability}
Our manuscript has no associated data.

\bibliographystyle{plain}
\bibliography{refs}

\end{document}